\newcommand{\C}{\mathbb{C}}
\newcommand{\Q}{\mathbb{Q}}
\newcommand{\R}{\mathbb{R}}
\newcommand{\Z}{\mathbb{Z}}
\newcommand{\A}{\mathcal{A}}
\renewcommand{\P}{\mathcal{P}}
\newcommand{\mcO}{\mathcal{O}}
\newcommand{\F}{\mathcal{F}}
\newcommand{\M}{\mathcal{M}}
\newcommand{\D}{\mathbb{D}}
\newcommand{\rot}{\text{rot}}
\newcommand{\V}{\mathcal{V}}
\newcommand{\vol}{\operatorname{vol}}
\newcommand{\ro}{\mathfrak{r}}
\newcommand{\hm}{\text{hm}}
\theoremstyle{plain}
\newtheorem{thm}{Theorem}
\newtheorem{cor}[thm]{Corollary}
\newtheorem{prop}[thm]{Proposition}
\newtheorem{lem}[thm]{Lemma}
\theoremstyle{definition}
\newtheorem{defn}[thm]{Definition}
\newtheorem{rmrk}[thm]{Remark}
\newtheorem{ex}[thm]{Example}
\numberwithin{equation}{section}
\numberwithin{thm}{section}
\title{Mean action of periodic orbits of area-preserving annulus diffeomorphisms}
\author{Morgan Weiler\footnote{Partially supported by the NSF grant EMSW21-RTG-1344991.}}
\date{}
\begin{document}
\maketitle

\begin{abstract}
\noindent An area-preserving diffeomorphism of an annulus has an ``action function" which measures how the diffeomorphism distorts curves. The average value of the action function over the annulus is known as the Calabi invariant of the diffeomorphism, while the average value of the action function over a periodic orbit of the diffeomorphism is the mean action of the orbit. If an area-preserving annulus diffeomorphism is a rotation near the boundary, and if its Calabi invariant is less than the maximum boundary value of the action function, then we show that the infimum of the mean action over all periodic orbits of the diffeomorphism is less than or equal to its Calabi invariant.
\end{abstract}

\section{Introduction}\label{sec:intro}

In this paper we study the geometry of the periodic orbits of area-preserving diffeomorphisms of the annulus with boundary. We prove that a large class of these diffeomorphisms have a periodic orbit with an upper bound on the amount by which the diffeomorphism distorts curves between the orbit and the boundary.

Our result provides a quantitative illustration of several general themes in the study of periodic points of annulus diffeomorphisms. The celebrated Poincar\'{e}-Birkhoff theorem \cite{birkhoff2} tells us that a diffeomorphism of an annulus with boundary is guaranteed to have at least two fixed points so long as it is area-preserving and twists the boundary components in opposite directions. Franks proved in 1992 in \cite{franks} that an area-preserving homeomorphism of the annulus with boundary is guaranteed to have infinitely many interior periodic points if it has any periodic points. The area-preserving condition is necessary in both theorems: there are simple examples of diffeomorphisms of the annulus which are not area-preserving, are twist maps, have periodic points along the boundary, and have no periodic points in the interior (see \cite[Chapter 8]{mcdsal}). Therefore, it is natural to seek to formulate a set of sufficient conditions for the existence of periodic points entirely in terms of the quantitative properties of the area-preserving geometry. Similarly, it is natural to investigate the geometry of the resulting periodic points in quantitative terms.

We quantify the geometry using an action function. The precise definition is given as Definition \ref{defn:af}, however, the intuition behind it is the following. Let $z$ be a fixed point of an area-preserving diffeomorphism $\psi$ of the closed annulus $A$, which preserves the boundary components of $A$ as sets.  Choose any curve $\eta$ connecting $z$ to the boundary of $A$ preferred by the action function (this is an arbitrary choice we make in the definition of the action function: see (\ref{eqn:fplus}). The image of $\eta$ under $\psi$, the original curve $\eta$, and the boundary of $A$ determine some signed area. The value of the action function at $z$ is this area, up to an integer corresponding to the choice of a lift of $\psi$ to the universal cover of $A$. The action function has a natural extension to a real-valued function on $A$.

The average of the action function over $A$ is known as the Calabi invariant of $\psi$. If we assume further that $\psi$ is a rotation near the boundary, we show in Theorem \ref{thm:main} that so long as the Calabi invariant is less than the maximum of the two boundary values of the action, then there is a periodic orbit of $\psi$ over which the average of the action is at most the Calabi invariant plus any small $\epsilon>0$. (Corollary \ref{cor:main} provides a corresponding result when the Calabi invariant is greater than the minimum of the two boundary values of the action.) Our result provides periodic orbits in examples when it is comparatively easy to compute the Calabi invariant but otherwise difficult to understand the dynamics, e.g. Example \ref{ex:backrot}. Furthermore, the periodic orbits picked out by our theorem correspond to fixed points of some iterate of $\psi$, so our upper bound on the action constrains the effect the iterate $\psi$ can have on curves near this fixed point.

Our result also provides quantitative sufficient conditions for periodic points. Frank's theorem can be read as evidence for the philosophy that if an area-preserving diffeomorphism does not have the simplest dynamics possible, then we expect it to have very complex dynamics (see Ginzburg, \cite[\S6.2]{ginzconley}). Our Theorem \ref{thm:main} and Corollary \ref{cor:main} imply that if both boundary values of the action and the Calabi invariant are not all equal and irrational, then $\psi$ will have periodic points. Therefore, not only are the dynamics of $\psi$ delicate, exhibiting this all-or-nothing dichotomy, but our hypotheses provide a necessary quantitative condition for $\psi$ to balance between ``all" and ``nothing."

In \cite{mean}, Hutchings proved an analogous result for the disk via realizing the diffeomorphism in question as the Poincar\'{e} return map of a Reeb flow on $S^3$. The main tool in \cite{mean} is a filtration on the ECH of a three-manifold with zero first homology by the linking number of the generators with a chosen elliptic orbit. We generalize the results of \cite{mean} to the annulus. There are a number of reasons why it is not possible to directly extend the techniques from \cite{mean}. Firstly, the three-manifolds involved are a family of contact lens spaces rather than spheres. This requires us to extend the construction the linking number filtration on ECH from manifolds with zero first homology to manifolds with first Betti number zero. In order to compute the sum of two such filtrations on our family of lens spaces, we extend the methods from \cite{T3} and \cite{toric}, however, a number of features differ, particularly the index computations in \S\ref{subsec:computeech}. Secondly, the additional complications introduced by the second boundary component mean that we are not able to obtain as strong a result purely through contact geometry as Hutchings is in \cite{mean} (compare our Proposition \ref{prop:penultimate} to \cite[Proposition 2.2]{mean}), and therefore must take much more care when proving our main theorem, Theorem \ref{thm:main}, at the end of \S\ref{subsec:finalbound}. Our theorem is also stronger than the theorem for the annulus which can be obtained as a corollary of the disk theorem: see \S\ref{subsec:whatpsi}.

It is likely possible to generalize our techniques and those of \cite{mean} to surfaces with higher genus or more boundary components, however, such an extension would require new computational methods for ECH.

\subsection{Definitions and main theorem}\label{subsec:defnsthm}

Denote by $(A,\omega)$ the annulus $[-1,1]_x\times(\R/2\pi\Z)_y$ with the symplectic form $\omega=\frac{1}{2\pi}\,dx\wedge dy$.

Let $\psi:(A,\omega)\to(A,\omega)$ be an area-preserving diffeomorphism which fixes the boundary components $\partial_\pm A:=\{\pm1\}\times(\R/2\pi\Z)$ as sets and which is a rotation near the boundary. That is, for any choice of a lift $\tilde\psi$ of $\psi$ to the universal cover $\tilde A=[-1,1]\times\R$ of $A$, there are $y_\pm\in\R$ for which
\[
\tilde\psi(x,y)=\begin{cases}(x,y+2\pi y_+)&\text{ for $x$ sufficiently close to 1}
\\(x,y+2\pi y_-)&\text{ for $x$ sufficiently close to $-1$}\end{cases}
\]

Note that the choices of $y_+$, $y_-$, and $\tilde\psi$ are all equivalent. Denote by $G$ the set of all pairs $(\psi,y_+)$ as above, and note that $G$ is a group under $(\psi,y_+)\circ(\psi',y_+')=(\psi\circ\psi',y_++y_+')$.  We say ``$\tilde\psi$ is a translation by $2\pi y_\pm$ near $\partial\tilde A$" to emphasize the choice of lift, and we say ``$\psi$ is a rotation by $y_\pm$ near $\partial A$" when we want to emphasize the geometry of $\psi$ in $A$ at the expense of specifying the dependence of $y_\pm$ on a choice of lift $\tilde\psi$.

Let $\beta$ be any primitive of $\omega$ for which
\begin{equation}\label{eqn:betaboundary}
\beta|_{\partial A}=\begin{cases}\frac{1}{2\pi}\,dy&\text{ if } x=1
\\-\frac{1}{2\pi}\,dy&\text{ if } x=-1\end{cases}
\end{equation}

Because $\psi$ is a symplectomorphism, the one-form $\psi^*\beta-\beta$ is closed, and therefore represents a class in $H^1(A,\R)$. Elements of $H^1(A;\R)$ are determined by the value they take on any circle generating $H_1(A;\R)$, and $\psi^*\beta-\beta$ sends that circle to zero. Therefore it is exact. We are interested in one of the functions exhibiting this exactness because it carries dynamical information about $\psi$.

\begin{defn}\label{defn:af} The \emph{action function} of $(\psi,y_+)\in G$ with respect to the primitive $\beta$ of $\omega$ satisfying (\ref{eqn:betaboundary}) is the unique function $f=f_{(\psi,y_+,\beta)}:A\to\R$ for which
\begin{align}
df&=\psi^*\beta-\beta \label{eqn:df}
\\f|_{\partial_+A}&=y_+ \label{eqn:fplus}
\end{align}
\end{defn}

Note that $f$ is constant near $\partial A$.

Let $F$ denote the flux of $\tilde\psi$ applied to a generator of $H_1(A,\partial A)$, oriented from $\partial_-A$ to $\partial_+A$. That is, if $[(x,0)]$ denotes this class, then $F$ is the $\tilde\omega:=\frac{1}{2\pi}\,dx\wedge dy$-area in $\tilde A$ between $\psi(x,0)$ and $(x,0)$. We will also refer to $F$ as the \emph{flux} of $\tilde\psi$. The flux is related to the value of the action function at $\partial_-A$.

\begin{lem} $f|_{\partial_-A}=-y_-+F$
\end{lem}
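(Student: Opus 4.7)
The plan is to read off $f|_{\partial_-A}$ by integrating $df=\psi^*\beta-\beta$ along a curve from $\partial_+A$ to $\partial_-A$, and then to relate that integral to the flux $F$ by a single application of Stokes' theorem on the universal cover. Let $\tilde\gamma\colon[-1,1]\to\tilde A$ be the horizontal lift $\tilde\gamma(x)=(x,0)$, which runs from $(-1,0)\in\partial_-\tilde A$ to $(1,0)\in\partial_+\tilde A$. Since $\tilde\psi$ is a translation by $2\pi y_\pm$ near the two boundary components, the image $\tilde\psi\circ\tilde\gamma$ ends at $(\pm 1,2\pi y_\pm)$. Adjoining the two vertical boundary segments $\tau_\pm$ from $(\pm 1,0)$ to $(\pm 1,2\pi y_\pm)$ along $\partial\tilde A$, the combination
\[
\sigma:=\tilde\gamma+\tau_+-\tilde\psi\tilde\gamma-\tau_-
\]
is a 1-cycle bounding a 2-chain $D$ whose signed $\tilde\omega$-area is, by the definition of $F$, precisely the flux.

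Stokes' theorem then reduces the identity to four boundary integrals of $\tilde\beta$:
\[
F=\int_D\tilde\omega=\int_{\tilde\gamma}\tilde\beta+\int_{\tau_+}\tilde\beta-\int_{\tilde\psi\tilde\gamma}\tilde\beta-\int_{\tau_-}\tilde\beta.
\]
The segment integrals $\int_{\tau_+}\tilde\beta=y_+$ and $\int_{\tau_-}\tilde\beta=-y_-$ follow directly from (\ref{eqn:betaboundary}). For the remaining pair, change of variables gives
\[
\int_{\tilde\psi\tilde\gamma}\tilde\beta-\int_{\tilde\gamma}\tilde\beta=\int_{\tilde\gamma}(\tilde\psi^*\tilde\beta-\tilde\beta)=\int_{\tilde\gamma}d\tilde f=\tilde f(1,0)-\tilde f(-1,0)=y_+-f|_{\partial_-A},
\]
using (\ref{eqn:df}) and (\ref{eqn:fplus}). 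Substituting and solving gives $f|_{\partial_-A}=-y_-+F$.

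The main thing to watch is the bookkeeping of orientations and signs: one must verify that $\sigma$ is genuinely a cycle, that the sign convention in the informal definition of $F$ lines up with the orientation that $\partial D=\sigma$ induces on $D$ (easy to verify in the pure rotation test case $\tilde\psi(x,y)=(x,y+2\pi c)$, where $F=2c$ by direct computation), and that the line integrals along $\tau_\pm$ carry the correct signs dictated by (\ref{eqn:betaboundary}). Once these conventions are pinned down, no further computation is required beyond Stokes' theorem and the evaluations above.
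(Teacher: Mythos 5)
Your proof is correct and is essentially the paper's argument made more explicit: the paper computes $(\psi^*\beta-\beta)([(x,0)])$ directly as $-F$ plus the two boundary-segment contributions from (\ref{eqn:betaboundary}), which is precisely your Stokes' theorem computation on the 2-chain $D$ with $\partial D=\sigma$, just written without introducing the cycle $\sigma$ and the segments $\tau_\pm$ by name. The added care you take in checking orientations against the rotation test case $\tilde\psi(x,y)=(x,y+2\pi c)$ (giving $F=2c$) is a useful sanity check not present in the paper, but the underlying decomposition and sign bookkeeping are identical.
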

\begin{proof} Compute
\begin{align*}
f|_{\partial_+A}-f|_{\partial_-A}&=df([(x,0)])
\\&=(\psi^*\beta-\beta)([(x,0)])
\\&=-F-\int_{2\pi y_+}^0\frac{1}{2\pi}\,dy-\int_0^{2\pi y_-}-\frac{1}{2\pi}\,dy
\\&=-F+y_++y_-
\end{align*}
Therefore if $f|_{\partial_+A}=y_+$, we must have $f|_{\partial_-A}=-y_-+F$.
\end{proof}

Similarly, if $(x,y)$ is a fixed point of $\psi$ and $\eta$ is a curve from $(x,y)$ to $\partial_+A$, then $f(x,y)$ is the area of the wedge between $\eta$, $\psi(\eta)$, and $\partial_+A$ plus an integer determined by $y_+$.

Although $f$ depends on $\beta$, we use it to define two other measurements of the geometry of $\psi$ which depend only on $(\psi,y_+)$. Therefore we drop the triple $(\psi,y_+,\beta)$ from our notation.

\begin{defn} The \emph{Calabi invariant} of $(\psi,y_+)$ is the number
\[
\V(\tilde\psi)=\V(\psi,y_+):=\frac{\int_Af\omega}{\int_A\omega}
\]
\end{defn}

\begin{lem}\label{lem:calabiindep} $\V(\tilde\psi)$ is independent of the choice of primitive $\beta$ for $\omega$ satisfying (\ref{eqn:betaboundary}).
\end{lem}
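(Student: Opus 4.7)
The plan is to show that if $\beta$ and $\beta'$ both satisfy (\ref{eqn:betaboundary}), then their associated action functions $f$ and $f'$ differ by a function of the form $h\circ\psi - h$, which integrates to zero against $\omega$ because $\psi$ is area-preserving.

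First I would establish that $\beta - \beta'$ is exact on $A$. It is closed since $d\beta = d\beta' = \omega$. Although $H^1(A;\R) \neq 0$, the boundary condition (\ref{eqn:betaboundary}) gives $(\beta - \beta')|_{\partial A} = 0$. Applying Stokes' theorem to the half-annulus $[0,1]_x \times (\R/2\pi\Z)_y$, whose boundary is $\partial_+ A$ minus the core circle $\{0\}\times(\R/2\pi\Z)$, we obtain $\int_{\{0\}\times(\R/2\pi\Z)}(\beta - \beta') = \int_{\partial_+A}(\beta - \beta') = 0$, since $\int_{[0,1]\times(\R/2\pi\Z)} d(\beta - \beta') = 0$ (the two contributions from $\omega$ cancel). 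Because this core circle generates $H_1(A;\R)$, the form $\beta - \beta'$ is exact. Write $\beta - \beta' = dh$; note $h$ is constant on each boundary component since $dh|_{\partial A} = 0$.

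Next, I would compare $f$ and $f'$. From (\ref{eqn:df}),
\[
d(f-f') = \psi^*(\beta - \beta') - (\beta - \beta') = \psi^*(dh) - dh = d(h\circ\psi - h),
\]
so $f - f' = h\circ\psi - h + c$ for some constant $c$. To pin down $c = 0$, evaluate on $\partial_+A$: both $f$ and $f'$ equal $y_+$ there by (\ref{eqn:fplus}), and since $\psi$ preserves $\partial_+A$ setwise and $h$ is constant on $\partial_+A$, we get $(h\circ\psi - h)|_{\partial_+A} = 0$, forcing $c = 0$.

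Finally, I would compute
\[
\int_A (f - f')\,\omega = \int_A (h\circ\psi)\,\omega - \int_A h\,\omega = \int_A \psi^*(h\,\omega) - \int_A h\,\omega = 0,
\]
where the last equality uses $\psi^*\omega = \omega$ together with the change-of-variables formula on the diffeomorphism $\psi:A\to A$. Dividing by $\int_A\omega$ shows $\V(\psi,y_+)$ is independent of $\beta$. The only subtle point is the exactness of $\beta - \beta'$, which requires the boundary condition in an essential way; the remainder of the argument is a direct computation with the defining equations of $f$ and $f'$.
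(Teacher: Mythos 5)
Your proof is correct and follows essentially the same approach as the paper: show $\beta-\beta'$ is exact (you supply the Stokes argument on a half-annulus, while the paper states the cohomological reasoning more tersely), write $\beta-\beta'=dh$, deduce $f-f'=h\circ\psi-h$ using the boundary conditions, and integrate using $\psi^*\omega=\omega$. The extra care you take in verifying exactness and pinning down the constant $c=0$ is a slight expansion of the paper's argument but not a genuinely different route.
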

\begin{proof} If $\beta'\in\Omega^1(A)$ is another primitive for $\omega$, then $\beta-\beta'$ represents a class in $H^1(A)$. Such classes are determined by their value on any generator for $H_1(A)$, so if $\beta'$ satisfies (\ref{eqn:betaboundary}), then $\beta-\beta'$ represents the zero class and therefore is exact.

Let $\beta-\beta'=dg$ for $g:A\to\R$. We have
\[
df_{(\psi,y_+,\beta')}=\psi^*\beta'-\beta'=\psi^*(\beta-dg)-(\beta-dg)=df_{(\psi,y_+,\beta)}+\left(dg-\psi^*dg\right)=df_{(\psi,y_+,\beta)}+d(g-g\circ\psi)
\]
Because of the boundary conditions (\ref{eqn:betaboundary}) on $\beta$ and $\beta'$, we know that $g$ is constant on $\partial A$. In particular, $g\circ\psi|_{\partial A}=g|_{\partial A}$, therefore
\[
f_{(\psi,y_+,\beta')}=f_{(\psi,y_+,\beta)}+g-g\circ\psi
\]
since both sides satisfy both the differential (\ref{eqn:df}) and boundary (\ref{eqn:fplus}) requirements for the action function of $(\psi,y_+,\beta')$. Now we can compute
\[
\int_Af_{(\psi,y_+,\beta')}\omega=\int_A(f_{(\psi,y_+,\beta)}+g-g\circ\psi)\omega=\int_Af_{(\psi,y_+,\beta)}\omega\int_Ag\omega-\int_A(g\circ\psi)\omega=\int_Af_{(\psi,y_+,\beta)}\omega
\]
where $\int_A(g\circ\psi)\omega=\int_Ag\omega$ because $\psi$ is an orientation-preserving diffeomorphism.
\end{proof}

When $\psi$ is the rotation $(x,y)\mapsto(x,y+2\pi y_+)$, using $\beta=\frac{x}{2\pi}\,dy$, we get $df\equiv0$, therefore $f\equiv y_+$, and $\V(\tilde\psi)=y_+$. In general, the Calabi invariant can be thought of as a kind of rotation number. For more about this perspective, see \cite{shel} and the references therein.

In the case that $\psi$ is Hamiltonian, the action function integrates the action form associated to the Hamiltonian. See \cite[Chapter 10]{mcdsal} for this perspective on symplectic manifolds without boundary, or \cite[\S2.2]{abhs} for the case of the disk.

On $A$, the Calabi invariant is a homomorphism $G\to\R$. However, we will only use
\[
\V(\psi,y_++1)=\V((id,1)\circ(\psi,y_+))=\V(\psi,y_+)+1
\]

\begin{defn} An $l$-tuple $\gamma=(\gamma_1,\dots,\gamma_l)$ of points in $A$ is a \emph{periodic orbit of $\psi$} if $\gamma_{i+1\mod l}=\psi(\gamma_i)$. It is \emph{simple} if $\gamma_i\neq \gamma_j$ when $i\neq j$.
\end{defn}

\begin{defn} The \emph{total action} of a periodic orbit $\gamma$ is $\A(\gamma):=\sum_{i=1}^lf(\gamma_i)$.
\end{defn}

\begin{lem}\label{lem:totalactionindep} The total action $\A(\gamma)$ of a periodic orbit $\gamma$ is independent of the choice of the primitive $\beta$ of $\omega$ satisfying (\ref{eqn:betaboundary}).
\end{lem}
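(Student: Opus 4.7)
The plan is to reuse the comparison formula already established in the proof of Lemma \ref{lem:calabiindep}. There we saw that if $\beta'$ is another primitive of $\omega$ satisfying (\ref{eqn:betaboundary}), then $\beta-\beta'=dg$ for a smooth function $g:A\to\R$ which is constant on each boundary component, and the two action functions are related by
\[
f_{(\psi,y_+,\beta')}=f_{(\psi,y_+,\beta)}+g-g\circ\psi.
\]
I would begin the proof by quoting this identity, so the bulk of the analytic work is already done.

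Next I would simply evaluate both sides on the periodic orbit $\gamma=(\gamma_1,\dots,\gamma_l)$ and sum. The correction term contributes
\[
\sum_{i=1}^l\bigl(g(\gamma_i)-g(\psi(\gamma_i))\bigr).
\]
The key observation is that, because $\gamma$ is a periodic orbit, $\psi(\gamma_i)=\gamma_{i+1\bmod l}$, so the sum $\sum_i g(\psi(\gamma_i))$ is just a cyclic relabeling of $\sum_i g(\gamma_i)$. Consequently the correction telescopes to zero, and the total action computed with respect to $\beta'$ agrees with that computed with respect to $\beta$.

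There is no real obstacle here: the only subtlety is invoking the fact from the proof of Lemma \ref{lem:calabiindep} that $\beta-\beta'$ is exact (which uses the boundary condition (\ref{eqn:betaboundary}) to kill the cohomology class of $\beta-\beta'$), and noting that the telescoping argument works equally well whether or not $\gamma$ is simple. Unlike the Calabi invariant, where we needed the change-of-variables formula $\int_A(g\circ\psi)\,\omega=\int_A g\,\omega$ to dispose of the correction, here the periodicity of $\gamma$ does the job directly, and the boundary values of $g$ play no role.
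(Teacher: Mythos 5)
Your proof is correct and is almost certainly the argument intended by the paper's one-line citation to Hutchings' Lemma 1.1: derive $f_{(\psi,y_+,\beta')}=f_{(\psi,y_+,\beta)}+g-g\circ\psi$ from $\beta-\beta'=dg$, then observe that summing the correction term over a periodic orbit telescopes to zero since $\psi$ cyclically permutes the $\gamma_i$. Your closing remarks correctly identify the contrast with Lemma \ref{lem:calabiindep}, where one instead needs area preservation to kill the correction.
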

\begin{proof} The proof is analogous to the proof of \cite[Lemma 1.1]{mean}.
\end{proof}

\begin{defn} Let $\ell(\gamma)$ denote the period of a periodic orbit, i.e. the cardinality of its underlying set. The \emph{mean action} of a periodic orbit $\gamma$ is the ratio $\frac{\A(\gamma)}{\ell(\gamma)}$.
\end{defn}

Let $\P(\psi)$ denote the set of simple periodic orbits of $\psi$. Our main result is
\begin{thm}\label{thm:main} Let $y_+$, $y_-\in\R$. Let $\psi$ be an area-preserving diffeomorphism of $(A,\omega)$, with $\tilde\psi$ a lift of $\psi$ to $\tilde A$ which is translation by $2\pi y_+$ near $\{1\}\times\R$ and by $2\pi y_-$ near $\{-1\}\times\R$. Let $F$ denote the flux of $\tilde\psi$. Assuming
\begin{equation}\label{eqn:Vmax}
\V(\tilde\psi)<\max\{y_+,-y_-+F\}
\end{equation}
or that one of $y_+$ or $y_-$ is rational, we have
\begin{equation}\label{eqn:goal}
\inf\left\{\frac{\A(\gamma)}{\ell(\gamma)}\;\middle|\;\gamma\in\P(\psi)\right\}\leq\V(\tilde\psi)
\end{equation}
\end{thm}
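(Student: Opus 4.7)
The plan is to realize $\psi$, or a perturbation of it, as the first-return map of a Reeb flow on a contact lens space, and then to extract a low-action periodic orbit from the embedded contact homology of that lens space equipped with two linking number filtrations. This follows the broad outline of \cite{mean} for the disk, with the two boundary components of $A$ forcing us to work with lens spaces in place of $S^3$. As a preliminary I would reduce to the case in which both $y_+$ and $y_-$ are rational and the strict inequality (\ref{eqn:Vmax}) holds: when one of $y_\pm$ is rational by hypothesis the other can be handled by a small perturbation, and when only (\ref{eqn:Vmax}) is assumed one approximates $\psi$ in the $C^\infty$ topology by diffeomorphisms $\psi_n$ with rational boundary rotations and with $\V(\tilde\psi_n)\to\V(\tilde\psi)$ from above, noting that the strict inequality persists under small perturbation.

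With $y_\pm$ rational, cap each boundary circle of $A$ by a disk carrying the rotation by $2\pi y_\pm$, form the mapping torus, and smooth to obtain a closed contact three-manifold $(Y,\lambda)$ which is a lens space and whose Reeb flow has $\psi$ as its first-return map to the annular page. Closed Reeb orbits of $\lambda$ are in bijection with periodic orbits of $\psi$ together with iterates of two distinguished elliptic orbits $e_\pm$ arising from the centers of the capping disks. The total action $\A(\gamma)$ of a periodic orbit of $\psi$ equals $\int_\gamma\lambda$ up to terms linear in the linking numbers with $e_\pm$, and the mean actions along $e_\pm$ themselves are $y_+$ and $-y_-+F$ respectively, precisely the two quantities appearing in (\ref{eqn:Vmax}).

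On $(Y,\lambda)$ I would define two filtrations $\mathcal L_\pm$ on $\mathrm{ECH}(Y,\lambda)$ counting the linking number of a generator with $e_\pm$; extending the construction of \cite{T3, toric} from the $H_1=0$ case to $b_1=0$ is one of the main technical tasks, carried out through the index calculations of \S\ref{subsec:computeech}. Using a toric model for the ECH of this family of lens spaces together with the Cristofaro-Gardiner--Hutchings--Ramos volume asymptotics, I would produce an infinite sequence of nonzero ECH classes whose grading grows linearly and whose symplectic action grows like $\sqrt{2n\,\vol(Y,\lambda)}$; a $U$-map argument then yields Reeb orbits appearing in these generators with action-to-period ratio controlled by $\vol(Y,\lambda)$, a quantity which computes directly in terms of $\V(\tilde\psi)$ and the boundary data. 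This is the purely contact-geometric output, recorded as Proposition~\ref{prop:penultimate}: a Reeb orbit of $\lambda$ with mean action at most $\V(\tilde\psi)+\epsilon$ for any prescribed $\epsilon>0$.

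The main obstacle, and the step in which the annulus differs most substantially from the disk of \cite{mean}, is the conversion of this Reeb orbit into a genuine periodic orbit of $\psi$. The Reeb orbit produced by ECH could a priori be an iterate of $e_+$ or of $e_-$, and neither corresponds to a periodic orbit of $\psi$. Here both the hypothesis (\ref{eqn:Vmax}) and the presence of two filtrations become essential: since the mean actions along $e_\pm$ are exactly $y_+$ and $-y_-+F$, a Reeb orbit with mean action strictly less than $\max\{y_+,-y_-+F\}$ cannot be a multiple of the capping orbit realizing that maximum, and the complementary filtration $\mathcal L_\mp$, used together with the rational approximation above, excludes the other capping orbit. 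This delicate combinatorial argument, carried out at the end of \S\ref{subsec:finalbound}, upgrades Proposition~\ref{prop:penultimate} to the mean action bound (\ref{eqn:goal}) claimed by Theorem~\ref{thm:main}.
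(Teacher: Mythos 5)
Your high-level road map matches the paper's (lens space open book, two linking filtrations, volume asymptotics), but there are two substantive errors in the steps that actually carry the argument.

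First, the rationality reduction is backwards. You propose to reduce to the case where both $y_\pm$ are rational and then ``cap each boundary circle of $A$ by a disk carrying the rotation by $2\pi y_\pm$.'' But the binding orbits $e_\pm$ that arise from those caps are nondegenerate elliptic Reeb orbits precisely because their rotation numbers $1/y_+$ and $1/(-y_-+F)$ are \emph{irrational}; a rational rotation on the cap produces a degenerate contact form, the ECH chain complex is undefined, and the entire contact-geometric machinery fails. Proposition~\ref{prop:constructcontact} accordingly requires $y_+$ and $-y_-+F$ irrational, and the rational boundary case (case (1a) of the final proof) is reduced \emph{to} the irrational case, not the other way around. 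What actually needs to be made rational (and then, via taking a $q$th power and Lemma~\ref{lem:rational}, integral) is the pair $y_+-y_-$ and $F$, so that the lens space parameter $\tilde p=y_+-y_-+F$ is an integer; those are perturbed carefully while keeping $y_\pm$ irrational.

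Second, you mischaracterize the output of the contact-geometric step. You assert Proposition~\ref{prop:penultimate} directly yields a Reeb orbit with mean action at most $\V(\tilde\psi)+\epsilon$. It does not: what comes out is a bound of the form
\[
\inf\left\{\frac{\A_N(\gamma)}{\ell(\gamma)}\right\}\le\sqrt{\hm(y_++N,-y_-+F+N)\bigl(\V(\tilde\psi)+N\bigr)},
\]
i.e.\ a geometric mean of the $N$-shifted harmonic mean of the boundary rotations with the $N$-shifted Calabi invariant, and only for $N$ large enough that the orbit $\alpha_k$ produced can be guaranteed nonempty. This harmonic-mean bound is genuinely weaker than $\V(\tilde\psi)+N$ in many cases, and converting it into the claimed Calabi bound (\ref{eqn:goal}) requires the entire case analysis of \S\ref{subsec:finalbound}: several families of explicit perturbations $b_\epsilon$, $b_i$, etc., verification that perturbed orbits near the boundary have action strictly above the bound so that the orbit detected must lie in the interior, and limits $\delta\to0$, $\epsilon\to0$, $N\to\infty$ in the right order. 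Your proposal treats this as a minor combinatorial check to rule out the two binding orbits, but the paper's proof is long precisely because this bootstrapping is where the difficulty lies. Relatedly, the bound on periods does not come from a $U$-map argument as you suggest; it comes from the knot filtration $\F_{e_+}+\F_{e_-}$ bounding the intersection number $\alpha\cdot A_0$ from below, combined with the index computations of \S\ref{subsec:computeech} and the key inequality of Lemma~\ref{lem:Nkinequality}.
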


\begin{rmrk}\label{rmrk:rationalboundary} If one of $y_+$ or $y_-$ is rational and $\V(\tilde\psi)\geq\max\{y_+,-y_-+F\}$, we can show (\ref{eqn:goal}) directly. On the boundary component near which $\psi$ rotates by a rational number, there will be a periodic orbit through every point which has mean action $y_+$ or $-y_-+F$, respectively. $\V(\tilde\psi)$ is greater than or equal to both of these. Therefore, during the course of the proof we will assume $\V(\tilde\psi)<\max\{y_+,-y_-+F\}$. This assumption is crucial -- see Example \ref{ex:irratrot} for a simple example illustrating why.
\end{rmrk}

Because none of the quantities in Theorem \ref{thm:main} depend on $\beta$, from here on out, unless specified otherwise, we will fix
\[
\beta=\frac{x}{2\pi}\,dy
\]

\begin{ex}\label{ex:twist} Consider the map $\psi(x,y)=(x,y+\pi x)$. Figure \ref{fig:twistmap} depicts the action of $\psi$ on the $(x,0)$ curve and indicates its circle $x=0$ of fixed points.

\begin{figure}
\centering
\includegraphics[width=50mm]{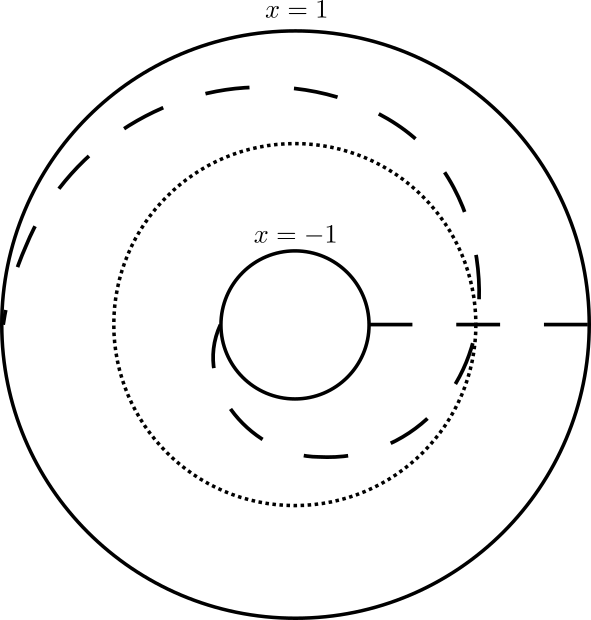}
\caption{The horizontal dashed segment is the $(x,0)$ curve, while the spiral dashed segment is the image of $\psi\circ(t\mapsto(t,0))$. Every point on the dotted circle $x=0$ is fixed.}
\label{fig:twistmap}
\end{figure}

We have $y_+=\frac{1}{2}$, $y_-=-\frac{1}{2}$, and $F=0$. Simple computations show that $f=\frac{1}{4}x^2+\frac{1}{4}$ and its average $\V(\tilde\psi)$ is $\frac{1}{3}$, which is less than $\frac{1}{2}=y_+=-y_-+F$. Moreover, $f$ achieves its minimum of $\frac{1}{4}<\V(\tilde\psi)$ at any point $(0,y)$, each of which is a fixed point.

(This map is not a rotation near the boundary, so it does not satisfy the hypotheses of Theorem \ref{thm:main}. However, there are diffeomorphisms to which our theorem does apply which are $C^0$ close to $\psi$, and for which all relevant quantities are close to those of $\psi$. See the proof of Theorem \ref{thm:main} in \S\ref{subsec:finalbound} for examples of such continuity with respect to the $C^0$ topology. Note that the Calabi invariant is not continuous with respect to the $C^0$ topology in general \cite{GambaudoGhys97}, but its values do converge on some subsequences of diffeomorphisms which converge in the $C^0$ topology, as shown in \cite{Oh16,Usher17}.)

\end{ex}

\begin{ex} Rational rotations are an illustration of the discussion in Remark (\ref{rmrk:rationalboundary}), in which every point is on a periodic orbit realizing the upper bound (\ref{eqn:goal}) on its mean action in terms of the Calabi invariant. Figure \ref{fig:halfrot} depicts the action of $\psi(x,y)=(x,y+\pi)$ on the $(x,0)$ curve and indicates a two-point periodic orbit.

\begin{figure}
\centering
\includegraphics[width=50mm]{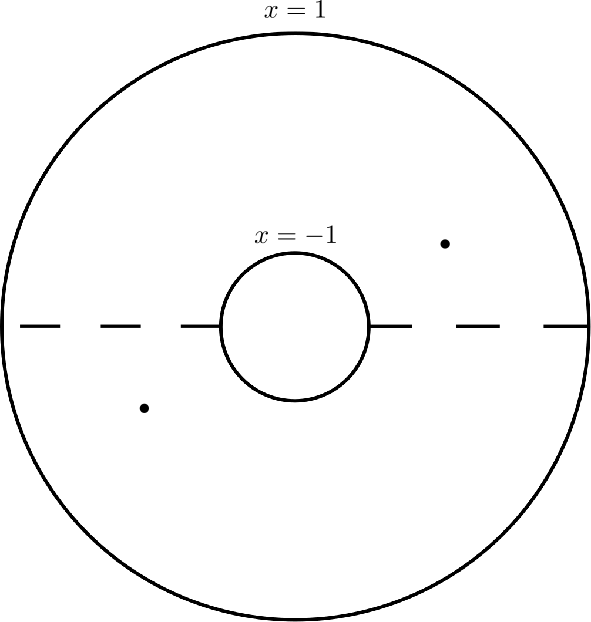}
\caption{The horizontal dashed segments are the $(x,0)$ curve and $\psi\circ(t\mapsto(t,0))$, while the two points make up an orbit of period two. Every point in $A$ is part of such an orbit.}
\label{fig:halfrot}
\end{figure}

We have $y_+=y_-=\frac{1}{2}$ and $F=1$. Because $\psi^*\beta=\beta$, the action function $f$ is identically $\frac{1}{2}$, so $\V(\tilde\psi)=\frac{1}{2}$. Every point is part of a two-point orbit with mean action $\frac{1}{2}=\V(\tilde\psi)$.
\end{ex}

\begin{ex}\label{ex:irratrot} Irrational rotations are an important example of $\psi$ to which the hypotheses of the theorem do not apply and for which its conclusion does not hold. These show that the inequality (\ref{eqn:Vmax}) is sharp when neither $y_+$ nor $y_-$ is rational.

Let $y_+\in\R-\Q$ and let $\psi(x,y)=(x,y+2\pi y_+)$. The action function $f$ is identically $y_+$ everywhere, so $\V(\tilde\psi)=y_+$ and $\psi$ does not satisfy the hypotheses of Theorem \ref{thm:main}. It also has no periodic orbits, therefore the infimum on the left-hand side of (\ref{eqn:goal}) is $+\infty$, meaning $\psi$ also does not satisfy the conclusion of Theorem \ref{thm:main}.
\end{ex}

\begin{ex}\label{ex:backrot} There are many somewhat mysterious symplectomorphisms to which our theorem applies. For example, consider the composition $\psi_2\circ\psi_1$, where $\psi_1$ is of the type described in Example \ref{ex:irratrot} with $0<y_0<2\pi$, and $\psi_2$ is a Hamiltonian symplectomorphism whose generating Hamiltonian is $y_1-x^2-y^2$ for $0<y_1<1-\epsilon$ when $x^2+y^2<y_1$, identically zero when $x^2+y^2>y_1+\epsilon$, and smooth and monotone in $x^2+y^2$ between. Intuitively, $\psi_2$ is a smoothing of a clockwise rotation in the circle $x^2+y^2<y_1$.

We can compute that the Calabi invariant of $\psi_2\circ\psi_1$ is strictly less than that of $\psi_1$, therefore our theorem proves that $\psi_2\circ\psi_1$ has periodic points.
\end{ex}

Replacing $(\psi,y_+)$ with $(\psi^{-1},-y_+)$ changes the signs of the Calabi invariant, the mean actions of periodic orbits, and the boundary values of the action function. Thus from Theorem \ref{thm:main} we immediately obtain:
\begin{cor}\label{cor:main} Let $y_+$, $y_-\in\R$. Let $\psi$ be an area-preserving diffeomorphism of $(A,\omega)$ which agrees with rotation by $2\pi y_+$ near $\partial_+A$ and by $2\pi y_-$ near $\partial_-A$, and whose flux is $F$. Assuming
\[
\V(\tilde\psi)>\min\{y_+,-y_-+F\}
\]
or that one of $y_+$ or $y_-$ is rational, we have
\[
\sup\left\{\frac{\A(\gamma)}{\ell(\gamma)}\;\middle|\;\gamma\in\P(\psi)\right\}\geq\V(\tilde\psi)
\]
\end{cor}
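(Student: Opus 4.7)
The plan is to deduce Corollary \ref{cor:main} from Theorem \ref{thm:main} applied to the inverse pair $(\psi^{-1}, -y_+) \in G$. The key observation is that the involution $(\psi, y_+) \mapsto (\psi^{-1}, -y_+)$ negates every quantity appearing in the statement, converting a lower bound on an infimum into an upper bound on a supremum.

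First I would verify the transformation rules for all relevant data. The lift $\tilde\psi^{-1}$ is a translation by $-2\pi y_\pm$ near the two boundary components, so the new boundary rotation numbers are $y_\pm' := -y_\pm$. The flux $F'$ of $\tilde\psi^{-1}$ equals $-F$, since $\tilde\psi^{-1}$ carries the reference curve $(x,0)$ back across signed area $-F$. For the action function, a short calculation using $df = \psi^*\beta - \beta$ shows that $f' := -f \circ \psi^{-1}$ satisfies both $df' = (\psi^{-1})^*\beta - \beta$ and $f'|_{\partial_+A} = -y_+$, so by Definition \ref{defn:af} it is the action function of $(\psi^{-1}, -y_+)$. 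Integrating over $A$ and using that $\psi$ is area-preserving gives $\V(\tilde\psi^{-1}) = -\V(\tilde\psi)$. For any simple periodic orbit $\gamma = (\gamma_1, \dots, \gamma_l)$ of $\psi$, the reversed tuple is a simple periodic orbit of $\psi^{-1}$ of the same period, and cyclic invariance of the sum yields $\sum_i f'(\gamma_i) = -\sum_i f(\psi^{-1}(\gamma_i)) = -\sum_i f(\gamma_i)$, so the mean action flips sign as well.

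Next I would translate the hypotheses. Multiplying through by $-1$ converts $\V(\tilde\psi) > \min\{y_+, -y_- + F\}$ into $\V(\tilde\psi^{-1}) < \max\{y_+', -y_-' + F'\}$, while rationality of one of $y_+$ or $y_-$ is preserved under negation. Applying Theorem \ref{thm:main} to $(\psi^{-1}, -y_+)$ then yields
\[
\inf\left\{-\frac{\A(\gamma)}{\ell(\gamma)} \;\middle|\; \gamma \in \P(\psi)\right\} \leq -\V(\tilde\psi),
\]
using the bijection $\P(\psi) = \P(\psi^{-1})$ via orbit reversal. Rearranging gives the desired conclusion. No step is expected to present a genuine obstacle; the only real care required is in verifying the rule $f' = -f \circ \psi^{-1}$ for the action function, since every other sign flip follows from it by integration or summation.
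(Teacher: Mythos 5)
Your proposal is correct and follows the same route the paper takes: the paper states the corollary immediately after observing that replacing $(\psi,y_+)$ with $(\psi^{-1},-y_+)$ negates the Calabi invariant, the mean actions, and the boundary values of the action, and this is precisely the involution you use. Your verification of $f' = -f\circ\psi^{-1}$, the sign flips of $F$ and $\V$, and the hypothesis translation $\V(\tilde\psi) > \min\{\cdots\} \Leftrightarrow \V(\tilde\psi^{-1}) < \max\{\cdots\}$ are all sound.
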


\subsection{Relation to area-preserving diffeomorphisms of the disk}\label{subsec:whatpsi}

Previously, Hutchings proved a version of Theorem \ref{thm:main} for the disk. The necessary definitions are entirely analogous to ours, and we have replaced Hutchings' notation with our own when it refers to equivalent objects. The $\D^2$ referred to is the unit disk in $\R^2$ with coordinates $x$, $y$.

\begin{thm}[Hutchings, {\cite[Theorem 1.2]{mean}}]\label{thm:disk} Let $\theta_0\in\R$, and let $\psi$ be an area-preserving diffeomorphism of $\left(\D^2,\frac{1}{\pi}dx\wedge dy\right)$ which agrees with rotation by angle $2\pi\theta_0$ near the boundary. Suppose that
\[
\V(\psi,\theta_0)<\theta_0
\]
Then
\begin{equation}\label{eqn:ineqdisk}
\inf\left\{\frac{\A(\gamma)}{\ell(\gamma)}\;\middle|\;\gamma\in\P(\psi)\right\}\leq\V(\psi,\theta_0)
\end{equation}
\end{thm}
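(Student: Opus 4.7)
The plan is to follow Hutchings' strategy in \cite{mean}: realize $\psi$ as a Poincar\'e return map of a Reeb flow on $S^3$, and translate the claim into a statement about Reeb orbit actions on a contact three-manifold. The three main ingredients are (i) a symplectic embedding of the mapping torus of $\psi$ into $S^3$, (ii) the Weyl-type asymptotic formula for ECH spectral invariants in terms of contact volume, and (iii) the linking number filtration on ECH with respect to the elliptic Reeb orbit coming from $\partial\D^2$.

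First I would construct a contact form $\alpha$ on $S^3$ whose Reeb vector field has a distinguished elliptic orbit $e$ of rotation number $\theta_0$ corresponding to $\partial\D^2$, and whose remaining closed orbits are in bijection, via a disk-like global surface of section, with the periodic orbits of $\psi$. The hypothesis that $\psi$ is an honest rotation near $\partial\D^2$ is precisely what allows the mapping torus of $\psi$ on the interior of $\D^2$ to be glued to a standard model neighborhood of $e$. In this model the symplectic action $\int_\gamma\alpha$ of a Reeb orbit $\gamma$ covering a periodic orbit of $\psi$ equals the total action $\A(\gamma)$, while the contact volume $\int_{S^3}\alpha\wedge d\alpha$ is computed from $\V(\psi,\theta_0)$ together with a contribution coming from the model neighborhood of $e$.

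Second I would invoke the ECH Weyl law of Cristofaro-Gardiner--Hutchings--Ramos: the ECH spectral invariants $c_k(\alpha)$ satisfy $\lim_{k\to\infty} c_k(\alpha)^2/k = 2\,\vol(\alpha)$. Since $ECH_*(S^3)$ has exactly one generator in each nonnegative even grading, each $c_k(\alpha)$ is realized by the total action of some ECH orbit set of index $2k$. The orbit set $e^k$ already realizes grading $2k$ with total action $k\theta_0$, so if only covers of $e$ realized $c_k(\alpha)$ for all large $k$, the Weyl law would force $\vol(\alpha)\geq \theta_0^2/2$, which via the above model contradicts the standing hypothesis $\V(\psi,\theta_0)<\theta_0$. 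Hence for every $\epsilon>0$ and every sufficiently large $k$, an ECH generator computing $c_k(\alpha)$ must contain at least one non-$e$ Reeb orbit whose action per period is at most $\V(\psi,\theta_0)+\epsilon$.

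The main obstacle is converting this asymptotic fact into an inequality on the mean action of a single simple periodic orbit. Here I would use the linking number filtration of ECH with respect to $e$ to pin down which orbit sets can realize $c_k(\alpha)$, and then a pigeonhole argument among the non-$e$ support of such an orbit set to extract a single simple embedded Reeb orbit $\gamma_k$ with $\A(\gamma_k)/\ell(\gamma_k)\leq \V(\psi,\theta_0)+\epsilon$. The delicate ECH index computation needed to guarantee that such a simple non-$e$ orbit actually appears --- rather than the action being absorbed into, say, very long orbits with still-large mean action --- is the technical heart of the argument; once it is in place, applying it along a sequence $\epsilon_n\to 0$ and passing to a corresponding sequence of simple periodic orbits of $\psi$ yields (\ref{eqn:ineqdisk}).
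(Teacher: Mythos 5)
You do not find a proof of Theorem \ref{thm:disk} in this paper: it is quoted from \cite{mean}, and the body of the paper is precisely the annulus generalization of the machinery you describe (Proposition \ref{prop:constructcontact} for the return-map/contact-form construction, Theorem \ref{thm:asymp} for the volume asymptotics, and the knot filtration of \S\ref{sec:kfech}). So your outline correctly identifies Hutchings' strategy; however, as a proof it contains one incorrect step and one genuine gap. The incorrect step is the claim that ``the orbit set $e^k$ already realizes grading $2k$ with total action $k\theta_0$'': the ECH index of $e^m$ grows quadratically in $m$, not linearly (the relative self-intersection term and the Conley--Zehnder sum both contribute terms of order $m^2$; compare Proposition \ref{prop:grading}), so the dichotomy ``if only covers of $e$ realized $c_k(\alpha)$ the Weyl law would force $\vol(\alpha)\geq\theta_0^2/2$'' does not stand as stated. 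More importantly, even a corrected version of that contradiction only shows that some generator computing $c_k$ contains a non-$e$ orbit; it gives no control whatsoever on that orbit's action \emph{per period}, which is what (\ref{eqn:ineqdisk}) requires.

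The missing content is exactly the quantitative step you defer as ``the technical heart.'' The actual mechanism, in \cite{mean} and in the annulus version here, is: (i) a computation of knot-filtered ECH showing that the degree-$2k$ class forces $\F_e$ of some cycle component to be at least $N_k$, a quantity growing like $\sqrt{2k\theta_0\cdot(\,\cdots)}$ (the analogues here are Proposition \ref{prop:sumkfech} and Lemma \ref{lem:Nkinequality}); (ii) the spectral bound $\A\leq\sqrt{2k(\vol+\epsilon)}$ from Theorem \ref{thm:asymp}; (iii) the identification of the linking number of the non-$e$ part with its intersection number with the page, i.e.\ with the period of the corresponding periodic orbit of $\psi$; and (iv) division of (ii) by (i), writing the generator as $e^m\alpha$, where the hypothesis $\V(\psi,\theta_0)<\theta_0$ is used quantitatively to show that the $e^m$ contributions cannot absorb the bound --- equivalently that the quotient is maximized at $m=0$ and that $\alpha\neq\emptyset$ (compare the role of (\ref{eqn:Vmax}) and the constant $C<1$ in the proof of Proposition \ref{prop:penultimate}). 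Your pigeonhole step extracting a single simple orbit from the orbit set (some $a_i/\ell_i\leq\sum a_i/\sum\ell_i$) is fine, and the nondegeneracy/rational-$\theta_0$ perturbation issues are routine, but without carrying out (i)--(iv) the proposal does not establish the inequality; it is an accurate table of ingredients with the decisive estimate left out.
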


In some cases it is possible to derive Theorem \ref{thm:main} from Theorem \ref{thm:disk} by collapsing the component of $\partial A$ corresponding to $\min\{y_+,-y_-+F\}$ to a point to obtain an area-preserving diffeomorphism of a disk. However, we show in Proposition \ref{prop:appendix} that it is not possible to do this in general for $\psi$ for which
\begin{equation}\label{eqn:12fv}
\frac{1}{2}F\leq\V(\tilde\psi)
\end{equation}
Therefore, Theorem \ref{thm:main} provides a large new class of area-preserving annulus diffeomorphisms for which (\ref{eqn:goal}) holds.

\subsection{Outline of the proof}\label{subsec:pfoutline}

We prove Theorem \ref{thm:main} by embedding the annulus in a contact three-manifold in such a way that the contact geometry recovers the dynamics of $\psi$. We are then able to use two filtrations on embedded contact homology, a homology theory for periodic orbits of Reeb vector fields on contact three-manifolds, to find a periodic orbit of $\psi$ which has both an upper bound on its total action and a lower bound on its period.

We begin in \S\ref{sec:reviewcontgeo} with a review of the necessary background. In \S\ref{sec:contmfld} we construct a contact form $\lambda_{\tilde\psi}$ on the lens space $L(y_+-y_-+F,y_+-y_-+F-1)$ whose geometry corresponds to the dynamics of $\psi$.

In \S\ref{sec:reviewech} and \S\ref{sec:kfech} we review embedded contact homology and its action filtration, and extend the filtration by the sum of the linking numbers with an elliptic orbit from \cite{mean}. We compute the embedded contact homology of $\lambda_{\tilde\psi}$ as well as its filtration by linking number, in \S\ref{sec:kfech}. While the fact that there are two binding components rather than one produces complications throughout the paper which do not appear in \cite{mean}, this computational section is one of the major differences from \cite{mean}. In order to prove Lemma \ref{lem:Nkinequality}, we need an understanding of the ECH chain complex which is not required in \cite{mean}.

In \S\ref{sec:proofmainthm} we use the computations from \S\ref{sec:kfech} as well as the dictionary from \S\ref{sec:contmfld} to prove the main theorem. Here is the other major difference from \cite{mean}; because we must analyze the annulus maps in much more detail, this section is quite a bit longer than the corresponding proof of \cite[Theorem 1.2]{mean} assuming \cite[Proposition 2.2]{mean}. The details are discussed in the introductory paragraphs of \S\ref{sec:proofmainthm} and in the introduction to the final proof of Theorem \ref{thm:main}.

\section{Review of contact geometry and dynamics in open book decompositions}\label{sec:reviewcontgeo}

In this section we will review some basic objects of contact geometry, paying particular attention to the relationships between the dynamics of the Reeb vector field and open book decompositions. These definitions are mostly standard.

\subsection{Basic definitions}\label{subsec:reviewdefns}

Let $Y$ be a closed oriented three-manifold.

Recall that a \emph{contact form} on $Y$ is a one-form $\lambda$ for which $\lambda\wedge d\lambda>0$. We can define its \emph{volume} to be $\vol(Y,\lambda):=\int_Y\lambda\wedge d\lambda$, and its \emph{contact structure} $\xi$ to be the oriented two-plane field $\ker\lambda$. Two contact structures $\xi_0$ and $\xi_1$ on $Y$ are \emph{contactomorphic} if there is a diffeomorphism of $Y$ which pushes one back to the other; if $\lambda_0$ and $\lambda_1$ have contactomorphic kernels, we say they are contactomorphic as well.

The \emph{Reeb vector field} of a contact form $\lambda$ is a smooth vector field on $Y$ determined by $d\lambda(R,\cdot)=0$ and $\lambda(R)=1$.

A \emph{Reeb orbit} is a smooth map $\gamma:\R/T\Z\to Y$ for some $T>0$, modulo reparameterization, for which $\dot\gamma(s)=R(\gamma(s))$. The \emph{symplectic action} of $\gamma$ is $\A(\gamma):=\int_\gamma\lambda=T$. We call $\gamma$ \emph{simple} if it is embedded. We use $\P(\lambda)$ to denote the set of simple Reeb orbits of $\lambda$.

Let $\psi_t$ denote the time $t$ flow of the Reeb vector field. A Reeb orbit $\gamma$ of action $T$ determines a symplectic linear map
\[
P_\gamma:=d_{\gamma(0)}\psi_T:(\xi_{\gamma(0)},d\lambda)\to(\xi_{\gamma(T)},d\lambda)=(\xi_{\gamma(0)},d\lambda)
\]
called the \emph{linearized return map}. If $P_\gamma$ has $1$ as an eigenvalue, then $\gamma$ is \emph{degenerate}, and it is \emph{nondegenerate} otherwise. The contact form $\lambda$ is \emph{degenerate} if any of its Reeb orbits are degenerate and \emph{nondegenerate} if all its Reeb orbits are nondegenerate.

In a choice of symplectic trivialization of $\gamma^*\xi$, the linearized return map is represented by a $2\times2$ symplectic matrix. If the eigenvalues of $P_\gamma$ are distinct complex conjugates on the unit circle, the orbit is \emph{elliptic}, and if they are distinct real multiplicative inverses, it is \emph{hyperbolic}. Further, a hyperbolic orbit is \emph{positive} or \emph{negative} according to whether its eigenvalues are. If we refer to a Reeb orbit as either elliptic or hyperbolic, we are also including the assumption that it is nondegenerate.

A choice of symplectic trivialization over an elliptic or hyperbolic Reeb orbit determines its \emph{rotation number} $\rot_\tau(\gamma)$. Let $\tau$ be a symplectic trivialization of $\gamma^*\xi$. If $\gamma$ is elliptic, then the path determined by $d_{\gamma(t)}\psi_t|_\xi$ under $\tau$ is homotopic in $Sp(2,\R)$ to a path of matrices conjugate to rotations by a continuous family of angles $2\pi\theta(t)$, where $\theta:[0,T]\to\R$, and $\theta(0)=0$. The rotation number of $\gamma$ is
\[
\rot_\tau(\gamma):=\theta(T)
\]

If $\gamma$ is hyperbolic, then let $v\in\xi_{\gamma(0)}$ be an eigenvector of $P_\gamma$. Under the trivialization $\tau$, the family $d_{\gamma(t)}\psi_t(v)|_\xi$ rotates by $2\pi\frac{k}{2}$ for some $k\in\Z$. The rotation number of $\gamma$ is
\[
\rot_\tau(\gamma):=\frac{k}{2}
\]

Assume $\gamma$ or some cover of $\gamma$ is part of the boundary of a surface $\Sigma$. We can ask that a pushoff of $\gamma$ which is constant with respect to the trivialization $\tau$ has intersection number zero with $\Sigma$. When we refer to this type of trivialization, we will say it \emph{has linking number zero with $\gamma$ with respect to $\Sigma$}, and the notation we will use is $\tau_\Sigma$. Note that this pushoff may not extend to a nonzero section of $\Sigma^*\xi$, a more common condition in contact geometry.

\subsection{Topological constructions}\label{subsec:gssobd}

We will study the dynamics of $\psi$ by embedding $A$ as a surface in a contact three-manifold whose Reeb flow sends $A$ to itself via the map $\psi$.

\begin{defn} A \emph{global surface of section} for a smooth flow $\psi_t$ of a non-vanishing vector field $V$ on a closed three-manifold $Y$ is a surface $\Sigma\subset Y$, possibly with boundary, embedded on its interior, for which
\begin{enumerate}
\item $V$ is transverse to $\Sigma-\partial\Sigma$
\item $V$ is tangent to $\partial\Sigma$; equivalently, the components of $\partial\Sigma$ are periodic orbits of $V$
\item For all $y\in Y-\Sigma$ there are $t_+>0$ and $t_-<0$ for which $\psi_{t_+}(y),\psi_{t_-}(y)\in\Sigma-\partial\Sigma$.
\end{enumerate}
The \emph{return time} is defined for $y\in\Sigma-\partial\Sigma$ as the first positive time when the flow returns $y$ to $\Sigma$:
\[
f(y):=\min\{t>0\;|\;\psi_t(y)\in\Sigma-\partial\Sigma\}
\]

The \emph{Poincar\'{e} return map} or \emph{return map} sends each point $y$ in $\Sigma-\partial\Sigma$ to its image under the flow for its return time:
\[
\psi(y):=\psi_{f(y)}(y)
\]
We will often simply refer to this map as the ``return map" of the flow of the vector field, with the global surface of section clear from context.
\end{defn}
Notice that the maps $f$ and $\psi$ do not always extend continuously to $\partial\Sigma$ (perhaps if the return time blows up approaching the boundary).

Global surfaces of section arise naturally as the closures of the pages of open book decompositions. This is a good setting in which to study the dynamics of the Reeb vector field since the contact topology is controlled, as we will explain below.

\begin{defn} An \emph{open book decomposition} of a closed oriented three-manifold $Y$ is a pair $(B,\Pi)$ where
\begin{itemize}
\item $B$ is an oriented link, called the \emph{binding}.
\item $\Pi:Y-B\to S^1$ is a fibration of the complement of $B$.
\item For each $\theta$, the surface $\Pi^{-1}(\theta)$ intersects the boundary of a small tubular neighborhood of $B$ in a longitude for the relevant component of $B$. The intersection of each $\Pi^{-1}(\theta)$ with the complement of an open tubular neighborhood of $B$ is called the \emph{$\theta$-page} of $(B,\Pi)$, and is denoted by $\Sigma_\theta$.
\end{itemize}
In particular, the closures of the surfaces $\Pi^{-1}(\theta)$ are Seifert surfaces for $B$.
\end{defn}

It is frequently helpful to focus on the page, rather than on the three-manifold. Therefore we recall the following definition as well.

\begin{defn} An \emph{abstract open book} is a pair $(\Sigma,\phi)$ where
\begin{itemize}
\item $\Sigma$ is a compact oriented surface.
\item $\phi$ is a diffeomorphism of $\Sigma$ which is the identity in a neighborhood of each boundary component. $\phi$ is called the \emph{monodromy} of $(\Sigma,\phi)$.
\end{itemize}
\end{defn}

An open book decomposition $(B,\Pi)$ of $Y$ determines an abstract open book as follows. We choose a compact oriented surface $\Sigma$ diffeomorphic to $\Sigma_0$. We obtain the monodromy by choosing a flow $\phi_t$ on $Y$ which sends the $s$ page to the $s+t$ page, and which near the binding rotates in the meridional direction about the binding as its axis. For $y\in\Sigma$, the monodromy $\phi$ is the map which sends $y$ to its image under the flow for time 1 (after conjugation with the diffeomorphism between $\Sigma_0$ and $\Sigma$).

Conversely, an abstract open book determines an open book decomposition of a three-manifold up to diffeomorphism. $Y-B$ is diffeomorphic to the mapping torus of $\phi$, and we fill in the tubular neighborhoods of the binding with solid tori by sending the meridian of the solid torus to the $S^1$ factor of the mapping torus and a longitude to the boundary of a page. If we change the monodromy to a conjugate of $\phi$ by another diffeomorphism of $\Sigma$, we obtain diffeomorphic open book decompositions. Therefore we usually consider monodromies only up to isotopy relative to the boundary.

The group of diffeomorphisms of a surface up to isotopy relative to the boundary is generated by \emph{Dehn twists}. A Dehn twist is any map isotopic relative to the boundary to the following model maps, which are supported on an annulus in the surface. In the coordinates $[-1,1]\times\R/2\pi\Z$ on that annulus, the model maps are
\[
(x,y)\mapsto\left(x,y\pm\left(2\pi\frac{x}{2}+\pi\right)\right)
\]
When we add in the above expression, we call the Dehn twist \emph{positive} or \emph{right-handed}, and when we subtract, we call it \emph{negative} or \emph{left-handed}. We usually specify a Dehn twist by the circle corresponding to $x=0$ in the above parameterization. When we are working on $A$, we refer to this circle as the ``core circle" of $A$.

\begin{defn}[Following \cite{dynamical}] An open book decomposition $(B,\Pi)$ is \emph{adapted to} a contact form $\lambda$ if the page $\Pi^{-1}(0)$ is a global surface of section for the Reeb vector field $R$ of $\lambda$. We will also say that $\lambda$ is \emph{adapted to} $(B,\Pi)$.
\end{defn}

When an open book decomposition $(B,\Pi)$ is adapted to a contact form $\lambda$, we refer to the return map of the Reeb flow from the zero page to itself as the return map of $(\lambda,B,\Pi)$. If $(\Sigma,\phi)$ is the abstract open book determined by $(B,\Pi)$, the return map of $(\lambda,B,\Pi)$ induces a diffeomorphism $\psi$ of $\Sigma$. We also refer to $\psi$ as the return map of $(\lambda,B,\Pi)$.

Open book decompositions control the contact structures of adapted contact forms:

\begin{thm}(\cite{giroux}, \cite{lecobcs})\label{thm:aobtocontacto}\label{thm:contactomorphic} If $(B_0,\Pi_0)$ is adapted to $\lambda_0$, $(B_1,\Pi_1)$ is adapted to $\lambda_1$, and both $(B_i,\Pi_i)$ induce the abstract open books with diffeomorphic pages and with monodromies which are conjugate under that diffeomorphism, then $\ker\lambda_0$ and $\ker\lambda_1$ are contactomorphic.
\end{thm}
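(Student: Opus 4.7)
The plan is to reduce the claim to the statement that any two contact forms on a single closed three-manifold adapted to the same open book decomposition have isotopic, hence contactomorphic, contact structures. First I would use the fact that an abstract open book $(\Sigma,\phi)$ reconstructs the ambient three-manifold uniquely up to diffeomorphism: $Y$ is the mapping torus of $\phi$ with solid tori glued in along neighborhoods of the binding so that meridians go to the $S^1$ factor and longitudes to $\partial\Sigma$. The hypothesis supplies a diffeomorphism $\sigma\colon\Sigma_0\to\Sigma_1$ with $\sigma\circ\phi_0\circ\sigma^{-1}$ isotopic rel boundary to $\phi_1$, and such a $\sigma$ promotes to a diffeomorphism $\Phi\colon Y_0\to Y_1$ with $\Phi(B_0)=B_1$ and $\Pi_1\circ\Phi=\Pi_0$. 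Pulling $\lambda_1$ back to $Y_0$ along $\Phi$ reduces the problem to showing that $\lambda_0$ and $\Phi^*\lambda_1$, both adapted to $(B_0,\Pi_0)$, have contactomorphic kernels on the same manifold.

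Next I would connect the two contact forms through a smooth path of adapted contact forms and invoke Gray stability. Following Giroux, the adaptedness of a contact form $\lambda$ to $(B,\Pi)$ is equivalent to the positivity conditions that $d\lambda$ restrict to a positive area form on every page $\Sigma_\theta$ and that $\lambda$ restrict to a positive $1$-form on $B$; these two conditions together force $\lambda\wedge d\lambda>0$ everywhere on $Y_0$. Both conditions are preserved under convex combinations, so the straight-line homotopy $\lambda_t=(1-t)\lambda_0+t\,\Phi^*\lambda_1$ is a smooth path of contact forms adapted to $(B_0,\Pi_0)$, and hence $\{\ker\lambda_t\}$ is a smooth family of contact structures on the closed manifold $Y_0$. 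Gray stability then produces an ambient isotopy $\rho_t$ of $Y_0$ with $(\rho_1)_*\ker\lambda_0=\ker(\Phi^*\lambda_1)$, and $\Phi\circ\rho_1\colon Y_0\to Y_1$ is the desired contactomorphism carrying $\ker\lambda_0$ to $\ker\lambda_1$.

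The main obstacle is verifying that the convex combination $\lambda_t$ really does satisfy $\lambda_t\wedge d\lambda_t>0$ near the binding, where the fibration $\Pi$ degenerates. Away from a tubular neighborhood of $B_0$, each $\lambda_t$ can be written in a form adapted to the mapping torus structure where the contact condition is manifestly preserved by convex combinations; but near a binding component one needs local normal-form coordinates $(r,\theta,\phi)$ in which an adapted $\lambda$ takes the shape $h_1(r)\,d\phi+h_2(r)\,d\theta$ with $h_1(0)>0$, $h_2(0)=0$, and $h_1h_2'-h_2h_1'>0$. One has to check that this open condition survives convex combinations, possibly after a controlled positive rescaling of $\lambda_0$ and $\Phi^*\lambda_1$ near $B_0$ to align the leading-order behavior. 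This is the technical content hidden in the classical references cited for the theorem.
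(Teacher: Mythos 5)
The paper does not prove this statement; it is cited as a known result of Giroux, with the lecture-notes reference standing in for the standard exposition. Your sketch is precisely that standard argument: pass through the abstract open book to identify the two ambient manifolds, reduce to two contact forms adapted to one open book decomposition on a single manifold, interpolate linearly, and apply Gray stability.

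The one point worth emphasizing is that the definition of \emph{adapted} used in this paper (the zero page is a global surface of section for the Reeb flow) must first be translated into the positivity formulation ($d\lambda$ a positive area form on pages, $\lambda$ positive on $B$) before the convexity of the interpolation is visible, since the Reeb vector field depends on $\lambda$ nonlinearly and the ``global surface of section'' condition is not itself manifestly preserved under convex combinations. You make this translation explicitly and correctly identify the binding-neighborhood normalization as the remaining technical point, which is exactly where the cited sources invest the effort (putting both forms into a common local model $h_1(r)\,d\phi + h_2(r)\,d\theta$ near $B$ before interpolating). The proposal is sound and consistent with the proof the paper is implicitly invoking by citation.
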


\subsection{Example: Lens spaces}\label{subsec:stdlens} Let $p\in\Z$. The lens space $L(p,p-1)$ is the quotient of
\[
S^3=\{(z_1,z_2)\in\C^2\;\big|\;|z_1|^2+|z_2|^2=1\}
\]
by the free action of the group
\[
G_{p,p-1}=\left\{\begin{pmatrix}\zeta&0\\0&\zeta^{p-1}\end{pmatrix}\;\middle|\;\zeta\in\C,\zeta^p=1\right\}
\]
We can understand the contact geometry of $L(p,p-1)$ via this quotient.

\begin{lem}\label{lem:stdlens}
\hfill
\begin{enumerate}[label=(\alph*)]
\item $L(p,p-1)$ has an open book decomposition, defined off the image of the Hopf link under the quotient by $G_{p,p-1}$, with annulus pages and monodromy $p$ right-handed Dehn twists along the core circle. \item There is a contact form on $L(p,p-1)$ adapted to this open book decomposition, and the return map of its Reeb flow is rotation by $\frac{p}{2}$.
\end{enumerate}
\end{lem}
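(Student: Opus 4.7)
The plan is to descend the standard positive Hopf open book on $S^3$, together with its standard contact form, through the quotient by $G_{p,p-1}$. Let $B_0 = \{z_1 = 0\} \cup \{z_2 = 0\}$ be the Hopf link and define $\Pi_0(z_1, z_2) = \arg(z_1 z_2)$. Classically $(B_0, \Pi_0)$ is an open book of $S^3$ whose pages, parameterized via $(r_1, r_2, \theta_1) = (\cos\alpha, \sin\alpha, \theta_1)$ with $\theta_2 = c - \theta_1$, are annuli in $(\alpha, \theta_1) \in [0, \pi/2] \times (\R/2\pi\Z)$, and whose monodromy is one right-handed Dehn twist along the core circle $\{\alpha = \pi/4\}$.

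For part (a), I first observe that $G_{p,p-1}$ preserves $B_0$ componentwise and that $\arg(\zeta z_1 \cdot \zeta^{p-1} z_2) = \arg(\zeta^p z_1 z_2) = \arg(z_1 z_2)$, so $\Pi_0$ descends to $\bar\Pi$ on $L(p,p-1) - \bar B_0$ with the same base circle. The generator $\zeta = e^{2\pi i/p}$ preserves each page and acts on it by $(\alpha, \theta_1) \mapsto (\alpha, \theta_1 + 2\pi/p)$, a free rotation, so each quotient page is again an annulus and $(\bar B_0, \bar\Pi)$ is the desired open book of $L(p,p-1)$. To identify the monodromy, I note that since the base $S^1$ is unchanged, traversing the downstairs base once corresponds to traversing the upstairs base once, realizing the single upstairs Dehn twist $(\alpha, \theta_1) \mapsto (\alpha, \theta_1 + D(\alpha))$ with $D: [0, \pi/2] \to [0, 2\pi]$ monotone. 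Reducing $\theta_1$ modulo $2\pi/p$ on the quotient annulus turns this one-wrap profile into a $p$-wrap profile, which is $p$ right-handed Dehn twists along the core circle.

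For part (b), the standard contact form $\lambda_0 = \tfrac{1}{2}(r_1^2\, d\theta_1 + r_2^2\, d\theta_2)$ on $S^3$ is $G_{p,p-1}$-invariant and so descends to a contact form on $L(p,p-1)$. Its Reeb vector field $R = 2(\partial_{\theta_1} + \partial_{\theta_2})$ is tangent to $B_0$ and satisfies $d\Pi_0(R) = 4 > 0$, so it is transverse to the pages and descends to a Reeb field adapted to $(\bar B_0, \bar\Pi)$. The smallest $t > 0$ for which the upstairs flow $(\theta_1, \theta_2) \mapsto (\theta_1 + 2t, \theta_2 + 2t)$ returns each downstairs page to itself is $t = \pi/2$, yielding the return map $(\alpha, \theta_1) \mapsto (\alpha, \theta_1 + \pi)$. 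On the quotient page, where $\theta_1$ has period $2\pi/p$, this is a shift of $\pi$; rescaling to the paper's convention via $y = p\theta_1$ (so that $y$ has period $2\pi$) turns this into $y \mapsto y + p\pi$, i.e., rotation by $p/2$.

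The main obstacle is the bookkeeping of orientations and rescaling factors: I would need to verify carefully that the upstairs Dehn twist is right-handed (not left-handed) so the downstairs ones are as well, and that the rescaling factor between the downstairs $\theta_1$-period $2\pi/p$ and the paper's $2\pi$ convention is exactly $p$, which then converts the raw shift of $\pi$ into rotation by $p/2$ in the paper's normalization. The remaining checks — $G_{p,p-1}$-invariance of $\lambda_0$, transversality of $R$, and freeness of the action on pages — are routine in the explicit $(\alpha, \theta_1)$ coordinates.
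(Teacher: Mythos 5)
Your proposal is correct and takes essentially the same approach as the paper: descend the positive Hopf open book and the standard contact form on $S^3$ through the quotient by $G_{p,p-1}$, observe that the $\theta_1$-period of the page shrinks from $2\pi$ to $2\pi/p$, and conclude that the single upstairs Dehn twist becomes $p$ Dehn twists and the upstairs return shift of $\pi$ becomes rotation by $p/2$ in the paper's normalization. The only cosmetic difference is that you parameterize the page by $(\alpha,\theta_1)$ with $r_1=\cos\alpha$ while the paper uses $(r_1,\theta_1)$ directly, and the paper computes the monodromy by writing down an explicit page-to-page flow with a profile function $\delta(r_1)$ rather than by the ``one-wrap becomes $p$-wrap'' argument you give; both are fine.
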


\begin{proof}
We will use ``polar" coordinates $(r_1,\theta_1,\theta_2)$ on $S^3-\{z_1=0\}\cup\{z_2=0\}$, where $z_1=r_1e^{i\theta_1}$ and $z_2=\sqrt{1-r_1^2}e^{i\theta_2}$. There is an open book decomposition on $S^3$ with annulus pages and binding the Hopf link $\{r_1=0\}\cup\{r_1=1\}$. Its projection map is given by $\Pi_1(r_1,\theta_1,\theta_2)=\theta_1+\theta_2$. This fibration factors through the quotient by $G_{p,p-1}$, therefore $\Pi_1$ induces a fibration of $L(p,p-1)$ off of the image $H_p$ of the Hopf link under the quotient.

Denote the projection map of the induced fibration on $L(p,p-1)-H_p$ by $\Pi_p$. Its pages are the product of intervals (parameterized by $r_1$) and the images of the circles $\theta_1+\theta_2=const$. In the parameterization by $\theta_1$, the $G_{p,p-1}$ action is precisely the action by $\Z_p$ given by $\theta_1\mapsto\theta_1+\frac{2\pi}{p}$. Therefore the pages of $\Pi_p$ are also annuli.

Consider the following flow on $S^3$:
\[
(r_1,\theta_1,\theta_2)\mapsto(r_1,\theta_1+2\pi\delta(r_1)t,\theta_2+2\pi(1-\delta(r_1))t)
\]
where $\delta:[0,1]\to[0,1]$ is a smooth function which is identically one at zero and identically zero at one. The flow is meridional near each binding component and sends pages to pages. It descends to a flow on $L(p,p-1)$ which is meridional near $H_p$ and sends the zero page to the $2\pi t$ page. The time one map of the flow, restricted to a page of $\Pi_p$, is
\[
(r_1,\theta_1)\mapsto(r_1,\theta_1+2\pi\delta(r_1))=\left(r_1,\theta_1+p\left(\frac{2\pi}{p}\delta(r_1)\right)\right)
\]
therefore the monodromy of $\Pi_p$ is $p$ right-handed Dehn twists about the core circle of the annulus page parameterized by $[0,1]\times\R/\frac{2\pi}{p}\Z$. This proves (a).

To prove (b), let $\lambda_1$ denote the restriction to $S^3$ of the standard 1-form $\frac{1}{2}r_1^2\,d\theta_1+\frac{1}{2}r_2^2\,d\theta_2$ on $\C^2-\{0\}$. Its Reeb vector field is $R=2\partial_{\theta_1}+2\partial_{\theta_2}$, which has simple orbits $\gamma^{1,2}:=\{r_{2,1}=0\}\cap S^3$. Moreover, $R$ is transverse to the pages of $\Pi_1$ and tangent to its binding, and the return times are all $\frac{\pi}{2}$.

Let $\mathfrak{q}_p:S^3\to L(p,p-1)$ denote the quotient map. Because $\lambda_1$ is invariant under the action of $G_{p,p-1}$, there is a unique contact form $\lambda_p$ on $L(p,p-1)$ for which $\mathfrak{q}_p^*\lambda_p=\lambda_1$. Its Reeb vector field is given by ${\mathfrak{q}_p}_*R$, so ${\mathfrak{q}_p}_*R$ is transverse to the pages of $\Pi_p$ and tangent to its binding. Its forward flow is given by
\[
(r_1,\theta_1+2t,\theta_2+2t)
\]
for $t\geq0$. All points in the zero page return to the zero page of $\Pi_p$ under this flow for the first time at $t=\frac{\pi}{2}$. Therefore the return map of $(\lambda_p,H_p,\Pi_p)$ is
\[
(r_1,\theta_1)\mapsto(r_1,\theta_1+\pi)
\]
which is rotation by $\frac{p}{2}$ on the annulus parameterized by $[0,1]\times\R/\frac{2\pi}{p}\Z$, proving (b).
\end{proof}

We will denote the abstract open book which can be obtained from $(H_p,\Pi_p)$ by $(A,D_p)$.

\section{Construction of the contact manifold}\label{sec:contmfld}

Now we begin the proof of Theorem \ref{thm:main}. In this section we realize $\psi$ as the return map of the Reeb flow of a contact form $\lambda_{\tilde\psi}$ adapted to an open book decomposition with annulus pages on a three-manifold in such a way that the geometry of $\lambda_{\tilde\psi}$ records the dynamics of $\psi$ (i.e., conclusions 1-4 of Proposition \ref{prop:constructcontact} hold). Therefore, the main content of Proposition \ref{prop:constructcontact} is that the three-manifold we must use is $L(\tilde p,\tilde p-1)$, where $\tilde p=y_+-y_-+F$. This relationship between the choice of lens space and the initial map $\psi$ follows from Step 4 of the proof of Proposition \ref{prop:constructcontact}, where we show that $\lambda_{\tilde\psi}$ is defined.

Proposition \ref{prop:constructcontact} is proved via a construction using contact geometry and three-manifold topology. We fix coordinates and notation in the course of the proof, which are referenced heavily later, particularly in the computational section \S\ref{sec:kfech}.

The hypotheses of Proposition \ref{prop:constructcontact} include only those those $\psi$ satisfying several additional assumptions beyond the hypotheses of Theorem \ref{thm:main}. One is that the action function is positive. It is easy to remove this assumption; see Lemma \ref{lem:Nshift}. The other assumption is that the quantities $\tilde p:=y_+-y_-+F$ and $F$ are integers. We weaken this requirement from the integers to the rationals in Lemma \ref{lem:rational}. Fully removing it requires careful perturbations, which we apply in \S\ref{subsec:finalbound} at the very end of the proof of Theorem \ref{thm:main}.

\begin{prop}\label{prop:constructcontact} Let $\psi$ be an area-preserving diffeomorphism of $(A,\omega)$ which is rotation by $2\pi y_\pm$ near $\partial_\pm A$, whose flux is $F\in\Z$, for which $y_+-y_-\in\Z$, both $y_+$ and $-y_-+F$ are irrational, and whose action function $f$ is positive. Then there is a contact form $\lambda_{\tilde\psi}$ on $L(\tilde p,\tilde p-1)$ for which
\begin{enumerate}
\item An open book decomposition $(B_{\tilde p},P_{\tilde p})$ of $L(\tilde p,\tilde p-1)$ with abstract open book $(A,D_{\tilde p})$ is adapted to $\lambda_{\tilde\psi}$. Let $A_0$ denote the closure of the zero page. The return time of the Reeb flow from $A_0$ to $A_0$ is given by the action function $f$, and $\psi$ is the return map of $(\lambda_{\tilde\psi},B_{\tilde p},P_{\tilde p})$.
\item The binding orbits have action 1, are elliptic, and have rotation numbers $y_+^{-1}$ and $(-y_-+F)^{-1}$ in the trivializations which have linking number zero with their component of $B_{\tilde p}$ with respect to $A_0$.
\item Let $\{|B_{\tilde p}|\}$ denote the set of components of $B_{\tilde p}$. There is a bijection $\P(\psi)\cup\{|B_{\tilde p}|\}\to\P(\lambda_{\tilde\psi})$. The symplectic action of the Reeb orbit $\gamma'$ corresponding to $\gamma\in\P(\psi)$ is $\A(\gamma)$, and the intersection number of $\gamma'$ with the page $A_0$ is $\ell(\gamma)$.
\item $\vol(L(\tilde p,\tilde p-1),\lambda_{\tilde\psi})=2\V(\tilde\psi)$
\end{enumerate}
\end{prop}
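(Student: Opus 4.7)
My approach is the standard open-book-via-mapping-torus construction, followed by careful Dehn filling at the boundaries. First, form the ``bulk'' 3-manifold as the quotient
\[
M_\psi = \{(t, z) \in \R \times A : 0 \le t \le f(z)\} \big/ (f(z), z) \sim (0, \psi(z)),
\]
and take $\lambda_0 = dt + \beta$ on $\R \times A$. This form descends to $M_\psi$ precisely because under the identification, $\psi^*\beta - \beta = df$ is absorbed by the change in $t$. Since $d\lambda_0 = \omega$ on the $A$-factor, $\lambda_0 \wedge d\lambda_0 = dt \wedge \omega > 0$, so $\lambda_0$ is contact with Reeb vector field $R = \partial_t$. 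The return map to the page $A_0 = \{t=0\}/{\sim}$ is then $\psi$, with return time $f$, giving part of item 1. The boundary of $M_\psi$ consists of two tori $T_\pm$ coming from $\partial_\pm A$, on which $f \equiv y_+$ (resp.\ $-y_- + F$) and $\psi$ is the corresponding rotation.

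Next, I close up $M_\psi$ by gluing in solid tori $D^2 \times S^1$ at $T_\pm$, choosing meridians to be the boundaries of $A_0$. On each solid torus near the core I use a model contact form $\lambda_\pm = h_1(r) \, d\phi + h_2(r) \, dt$, with $\phi$ a coordinate invariant under the mapping-torus identification (for instance, $\phi = y + 2\pi t$ near $T_+$), and functions $h_1, h_2$ chosen so that $h_1 \sim a r^2$ near $r = 0$, $h_2(0) = 1/y_+$, and boundary values at $r = 1$ match the bulk. The core circle $\{r = 0\}$ is then a Reeb orbit of action $y_+ \cdot h_2(0) = 1$; linearization shows it is elliptic, and a direct computation in the trivialization $\tau_{A_0}$ yields rotation number $y_+^{-1}$ (and analogously $(-y_- + F)^{-1}$ for $\gamma_-$), giving item 2.

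To identify the closed 3-manifold $Y$ with $L(\tilde p, \tilde p-1)$, I invoke Lemma \ref{lem:stdlens} together with Theorem \ref{thm:aobtocontacto}: it suffices to show the resulting abstract open book is $(A, D_{\tilde p})$, i.e., the monodromy is isotopic to $\tilde p$ right-handed Dehn twists about the core circle of $A$. This amounts to tracking how $\psi$ acts on a properly embedded arc from $\partial_- A$ to $\partial_+ A$, combining boundary rotations with the interior twist; an algebraic count using the flux shows that the effective twist number is exactly $y_+ - y_- + F = \tilde p$. Item 3 then follows automatically from the mapping-torus construction: non-binding Reeb orbits correspond to closed $\psi$-orbits, with symplectic action $\sum_i f(\gamma_i) = \A(\gamma)$ and intersection number $\ell(\gamma)$ with $A_0$.

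Finally, for item 4, $\vol(L(\tilde p, \tilde p-1), \lambda_{\tilde\psi}) = \int_{M_\psi} dt \wedge \omega = \int_A f \omega = 2\V(\tilde\psi)$, with a careful accounting showing the solid torus filling contributions are absorbed (e.g., by arranging the fillings to replace collars already present in $M_\psi$). \textbf{The main obstacle} is Step 3, the identification as $L(\tilde p, \tilde p-1)$: since $\psi$ is not the identity on $\partial A$, the effective Dehn twist count cannot be read off directly from a mapping-class-group representative but must be obtained by carefully combining the flux with the boundary rotation data --- this is the content of the ``Step 4'' referenced in the paragraph preceding the proposition. A secondary technical point is choosing $h_1$ and $h_2$ to simultaneously achieve smoothness, action 1, the correct rotation number in the right trivialization, and the exact volume formula.
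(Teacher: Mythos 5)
Your Step 1 construction (the variable-height suspension $M_\psi = \{(t,z): 0 \le t \le f(z)\}/\sim$ with $\lambda_0 = dt + \beta$) is a genuinely different and cleaner route than the paper's: the paper builds the standard mapping torus $[0,1]_\theta \times A / \sim$ and then interpolates via carefully chosen functions $\eta_i(\theta)$ to satisfy properties (i)--(vi), whereas you absorb $\psi^*\beta - \beta = df$ directly into the varying height. Your descent computation (restriction of $dt+\beta$ to the graph $\{t=f(z)\}$ is $df+\beta = \psi^*\beta$, matching the $t=0$ slice after $\psi$) is correct, and the contact condition, Reeb field, return-time identification, and volume calculation all follow immediately. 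This is a legitimate simplification.

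However, Step 2 contains a genuine error. You glue so that ``meridians [are] the boundaries of $A_0$,'' and you take $\lambda_\pm = h_1(r)\,d\phi + h_2(r)\,dt$ with $\phi = y + 2\pi t$ (the invariant longitudinal coordinate) and $h_1 \sim ar^2$, $h_2(0) = 1/y_+$; you then compute the core's action as $y_+ \cdot h_2(0)$. This implicitly makes the $t$-circle the core and the $\phi$-circle (which is $\partial_+ A_0$) the meridian. That is backwards. For an adapted open book the page boundary $\partial_+A_0$ must be a \emph{longitude} of the binding (the open book definition requires page boundaries to be longitudes), while the mapping-torus $S^1$-direction (your $t$-circle) becomes the meridian. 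With your stated gluing, $\partial A_0$ bounds a disk in the filling solid torus, capping off the annulus pages and destroying the open book; the resulting manifold is a sphere bundle over $S^1$, not $L(\tilde p, \tilde p-1)$. Correspondingly, $h_1$ and $h_2$ should be swapped: near $r=0$ one needs the coefficient of $d\phi$ (the core direction, period $2\pi$) to be nonzero --- in fact equal to $\frac{1}{2\pi}$ so that the binding has action $1$ --- and the coefficient of $dt$ (the disk-angular direction) to vanish to second order. Compare the paper's explicit expression near the binding, $\lambda_{\tilde\psi} = \frac{\rho_+^2 y_+}{2\pi}\,d\mu_+ + \frac{1-\rho_+^2}{2\pi}\,dt_+$, where $t_+ = \hat y$ and $\mu_+$ is the disk angle: the $dt_+$-coefficient is $\frac{1}{2\pi}$ at the core and the $d\mu_+$-coefficient is $O(\rho_+^2)$. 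Because your filling is the wrong one, the monodromy argument in Step 3, the rotation-number claims, and the identification with $L(\tilde p, \tilde p-1)$ all rest on a false premise; the error must be fixed before the remainder of the outline can be salvaged.
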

\begin{proof} We construct a form $\lambda_0$ on the mapping torus of $\psi$ whose Reeb vector field projects to the $S^1$ direction. We then glue in solid tori to the boundary of the mapping torus to form the lens space, and show that the contact form extends to the closed manifold and satisfies the conclusions of the proposition.

\subsubsection*{Step 1: Mapping torus}

Recall that we have fixed $\beta=\frac{x}{2\pi}\,dy$. Let
\[
M_\psi:=\frac{[0,1]_\theta\times A}{(1,x,y)\sim(0,\psi(x,y))}
\]
denote the mapping torus of $\psi$. Let $\eta_i$ be smooth functions of $\theta$. Let $\lambda_0$ denote the following one-form on $[0,1]\times A$
\begin{equation}\label{eqn:form0}
\lambda_0(\theta,x,y)=\eta_1(\theta)f(x,y)\,d\theta+\eta_2(\theta)f\circ\psi(x,y)\,d\theta+\beta+\eta_3(\theta)\,df+\eta_4(\theta)\,\psi^*df
\end{equation}
Analogous to properties (i)-(vi) in the proof of \cite[Proposition 2.1]{mean}, we'd like
\begin{itemize}
\item[(i)] $\lambda_0(\theta,x,y)=f(x,y)\,d\theta+\beta$ for $(x,y)$ near $\partial A$, which will help us glue in the binding.
\item[(ii)] $\lambda_0\in\Omega^1(M_\psi)$.
\item[(iii)] $\lambda_0$ is contact.
\item[(iv)] $R_0$ is a positive multiple of $\partial_\theta$.
\item[(v)] It takes time $f(x,y)$ to flow under $R_0$ from $(0,x,y)$ to $(1,x,y)$.
\item[(vi)] $\vol(M_\psi,\lambda_0)=\omega(A)\V(\tilde\psi)$.
\end{itemize}

The requirements (i)-(vi) determine the following conditions on the functions $\eta_i$.

In order to satisfy (i), choose $\delta_\pm$ so that $\psi$ is rotation by $2\pi y_\pm$ near $\partial_\pm A$. Since both $df\equiv0$ and $f\circ\psi\equiv f$ near $\partial A$, for $x\leq -1+\delta_-$ or $1-\delta_+\leq x$ we have
\[
\lambda_0(\theta,x,y)=(\eta_1(\theta)+\eta_2)f(x,y)\,d\theta+\beta
\]
therefore we need
\[
\eta_1(\theta)+\eta_2(\theta)=1
\]
for all $\theta$. In light of this we update (\ref{eqn:form0}) to
\begin{equation}\label{eqn:formi}
\lambda_0(\theta,x,y)=(1-\eta_2(\theta))f(x,y)\,d\theta+\eta_2(\theta)f\circ\psi(x,y)\,d\theta+\beta+\eta_3(\theta)\,df+\eta_4(\theta)\,\psi^*df
\end{equation}

To satisfy (ii), we need $\lambda_0$ to descend from $[0,1]\times A$ to $M_\psi$. That is, we need
\begin{equation}\label{eqn:pullback}
\lambda_0(1,x,y)=\psi^*\lambda_0(0,x,y)
\end{equation}
By computing both sides of (\ref{eqn:pullback}) we get
\begin{align}
&1-\eta_2(0)=\eta_2(1)\text{ and }\eta_2(0)=0\text{ and }1-\eta_2(1)=0\label{eqn:eta2}
\\&0=1-\eta_3(1)\label{eqn:beta}
\\&1-\eta_3(0)=\eta_3(1)-\eta_4(1)\label{eqn:psibeta}
\\&\eta_3(0)-\eta_4(0)=\eta_4(1)\label{eqn:psipsibeta}
\\&\eta_4(0)=0\label{eqn:psipsipsibeta}
\end{align}

In order to satisfy (iii), that $\lambda_0$ is contact, we need to check that $\lambda_0\wedge d\lambda_0>0$, where the sign agrees with that of the oriented coordinates $\theta$, $x$, $y$. We compute $d\lambda_0$:
\[
d\lambda_0(\theta,x,y)=(1-\eta_2(\theta))\,df\wedge d\theta+\eta_2(\theta)\,\psi^*df\wedge d\theta+\omega+\eta'_3(\theta)\,d\theta\wedge df+\eta'_4(\theta)\,d\theta\wedge\psi^*df
\]
The clearest way to obtain (iii)-(vi) is to set $d\lambda_0=\omega$, which follows from
\begin{equation}\label{eqn:dhatlambdaisomega}
1-\eta_2(\theta)=\eta_3'(\theta)\text{ and }\eta_2(\theta)=\eta_4'(\theta)
\end{equation}
We can therefore update (\ref{eqn:formi}) to
\begin{equation}\label{eqn:lambdawithc}
\lambda_0(\theta,x,y)=(1-\eta'_4(\theta))f(x,y)\,d\theta+\eta'_4(\theta)f\circ\psi(x,y)\,d\theta+\beta+(c+\theta-\eta_4(\theta))\,df+\eta_4(\theta)\,\psi^*df
\end{equation}
where $c$ is some constant. We can solve for $c$:
\[
1\overset{(\ref{eqn:beta})}{=}\eta_3(1)=c+1-\eta_4(1)\Rightarrow c=\eta_4(1)
\]
from which we update (\ref{eqn:lambdawithc}) to
\begin{equation}\label{eqn:formiii}
\lambda_0(\theta,x,y)=(1-\eta'_4(\theta))f(x,y)\,d\theta+\eta'_4(\theta)f\circ\psi(x,y)\,d\theta+\beta+(\eta_4(1)+\theta-\eta_4(\theta))\,df+\eta_4(\theta)\,\psi^*df
\end{equation}

Because $d\lambda_0=\omega$, we obtain (iv), that $R_0$ is a positive multiple of $\partial_\theta$.

To show that $\lambda_0$ is contact we compute $\lambda_0\wedge d\lambda_0$:
\[
\lambda_0\wedge d\lambda_0(\theta,x,y)=\left((1-\eta_4'(\theta))f(x,y)+\eta_4'(\theta)f\circ\psi(x,y)\right)\,d\theta\wedge\omega
\]
This is positive if
\begin{align}
\nonumber0&\leq(1-\eta_4'(\theta))f(x,y)+\eta_4'(\theta)f\circ\psi(x,y)
\\\nonumber&=f+\eta_4'(\theta)(f\circ\psi-f)
\\-f&<\eta_4'(\theta)(f\circ\psi-f)\label{eqn:hatcontactineq}
\end{align}
The inequality (\ref{eqn:hatcontactineq}) holds even when $f$ achieves its minimum. Moreover, since $f>0$, we have $f\circ\psi-f<\max(f)$. By rewriting (\ref{eqn:hatcontactineq}) when $f$ achieves its minimum, we get
\[
-\min(f)<\eta_4'(\theta)\max(f)\overset{\max(f)>0}{\Leftrightarrow}-\frac{\min(f)}{\max(f)}<\eta_4'(\theta)
\]
However, we can also rewrite (\ref{eqn:hatcontactineq}) as
\[
f>\eta_4'(\theta)(f-f\circ\psi)
\]
which is true if $1\geq\eta_4'(\theta)$, because $f\circ\psi>0$. Therefore we ask that
\begin{equation}\label{eqn:eta'bounds}
-\frac{\min(f)}{\max(f)}<\eta_4'(\theta)\leq1
\end{equation}

For (v) to hold, the Reeb vector field must take time $f(x,y)$ to flow from $(0,x,y)$ to $(1,x,y)$. Because the coefficient of $d\theta$ in $\lambda_0(\theta,x,y)$ is $(1-\eta_4'(\theta))f(x,y)+\eta_4'(\theta)f\circ\psi(x,y)$, the coefficient of $\partial_\theta$ in the Reeb vector field $R_0$ is its reciprocal:
\[
R_0(\theta,x,y)=\frac{1}{(1-\eta_4'(\theta))f(x,y)+\eta_4'(\theta)f\circ\psi(x,y)}\partial_\theta
\]
and the time it takes to flow from $(0,x,y)$ to $(1,x,y)$ is
\begin{align*}
\int_0^1((1-\eta_4'(\theta))f(x,y)&+\eta_4'(\theta)f\circ\psi(x,y))\,d\theta
\\&=f(x,y)-(\eta_4(1)-\eta_4(0))f(x,y)+(\eta_4(1)-\eta_4(0))f\circ\psi(x,y)
\\&\overset{\ref{eqn:psipsipsibeta}}{=}(1-\eta_4(1))f(x,y)+\eta_4(1)f\circ\psi(x,y)
\end{align*}
Therefore, if
\begin{equation}\label{eqn:eta1}
\eta_4(1)=0
\end{equation}
then we get the desired time to flow.

We get the volume required in (vi) without making any further restrictions:
\begin{align*}
\int_{M_\psi}\lambda_0\wedge d\lambda_0&=\int_{M_\psi}\left((1-\eta_4'(\theta))f(x,y)+\eta_4'(\theta)f\circ\psi(x,y)\right)\,d\theta\wedge\omega
\\&=\int_A\left(\int_0^1\left((1-\eta_4'(\theta))f(x,y)+\eta_4'(\theta)f\circ\psi(x,y)\right)\,d\theta\right)\omega
\\&=\int_Af\omega
\\&=\omega(A)\V(\tilde\psi)
\end{align*}

In summary, set $\eta=\eta_4$, which is a function with the properties
\begin{itemize}
\item $\eta(0)=0$ because of (\ref{eqn:psipsipsibeta}) and $\eta'(0)=0$ because of (\ref{eqn:dhatlambdaisomega}). We can achieve both of these conditions by asking for $\eta(\theta)\equiv0$ for $\theta$ near zero.
\item $\eta(1)=0$ because of (\ref{eqn:eta1}) and $\eta'(1)=1$ because of (\ref{eqn:dhatlambdaisomega}). We can achieve both of these conditions by asking for $\eta(\theta)=t-1$ for $\theta$ near one.
\item $\eta'$ is bounded by the inequalities in (\ref{eqn:eta'bounds}).
\end{itemize}
These properties are possible to achieve simultaneously. From $\eta$, we construct the following one-form on $M_{\psi}$, written in coordinates on $[0,1]\times A$:
\begin{equation}\label{eqn:formfinal}
\lambda_0(\theta,x,y)=(1-\eta'(\theta))f\,d\theta+\eta'(\theta)f\circ\psi\,d\theta+\beta+(\theta-\eta(\theta))\,df+\eta(\theta)\,\psi^*df
\end{equation}
$\lambda_0$ satisfies (i)-(vi). We also can check that it satsifies (\ref{eqn:eta2}) and (\ref{eqn:psibeta}), which we have not directly used in the construction so need to verify separately.

\subsubsection*{Step 2: The closed manifold}

Next we glue two solid tori to $\mathring M_\psi$ in order to obtain a closed manifold $Y_{\tilde\psi}$. Because we have assumed that both boundary rotation numbers are irrational, $y$ does not continue as a coordinate on the tori $x=const$ across $\theta=1$, but we will need global coordinates on these tori in order to glue in the solid tori. Let $\partial_\pm M_\psi$ denote the component of $\partial M_\psi$ for which $x=\pm1$, respectively. Near $\partial_\pm M_\psi$, instead of $y$ we use the coordinates
\[
\hat y=y+2\pi y_+\theta\text{ and }\check y=y+2\pi y_-\theta
\]

Let $T_\pm:=(\R/2\pi\Z)_{t_\pm}\times\D^2_{(\rho_\pm,\mu_\pm)}$, where $\rho_\pm\in[0,\delta_\pm]$ and $\mu_\pm\in\R/2\pi\Z$. Let $g_\pm:\mathring M_\psi\to T_\pm$ be given by
\begin{align*}
g_+(x,\hat y,\theta)&=\left(\sqrt{1-x},2\pi\theta,\hat y\right)=(\rho_+,\mu_+,t_+)
\\g_-(x,\theta,\check y)&=(\sqrt{x-(-1)},-F(2\pi\theta)+\check y,2\pi\theta)=(\rho_-,t_-,\mu_-)
\end{align*}

The manifold $Y_{\tilde\psi}$ is the union of $T_\pm$ and $\mathring M_\psi$, where points in $\mathring M_\psi$ are identified with their images in $T_\pm$ under the $g_\pm$.

Notice the order in which we've written the coordinates in the maps $g_\pm$. These agree with the orientations on the various components of $Y_{\tilde\psi}$. On $\mathring M_\psi$, $\theta$, $x$, $y$ are oriented coordinates, which induce the orientation $\hat y$, $\theta$ on the tori $x=const$ near $\partial_+M_\psi$. The gluing $g_+$ from $\mathring M_\psi$ and $T_+$ must send the tori $\rho_+=const$ up with the tori $x=const$, and should change orientation. Therefore the tori $\rho_+=const$ must be oriented by $\mu_+$, $t_+$ in $T_+$. That orientation is induced by the oriented coordinates $\ro_+$, $\mu_+$, $t_+$ on $T_+$. Similarly, the orientation necessary on $T_-$ is $\rho_-$, $t_-$, $\mu_-$.

At this point we could show that $Y_{\tilde\psi}$ is $L(\tilde p,\tilde p-1)$ using either Heegaard splittings or the surgery construction of lens spaces (see \cite[Chapter 2]{os}). However, this identification will follow easily from finding an open book decomposition of $Y_{\tilde\psi}$, which we do in Step 3.

\subsubsection*{Step 3: Open book decomposition}

We show that $Y_{\tilde\psi}-\{\rho_\pm=0\}$ is diffeomorphic to the mapping torus of $\tilde p$ right-handed Dehn twists about the core circle of an annulus, and that this monodromy arises as the return map of the flow of a vector field which is meridional near the circles $\rho_\pm=0$.

Denote by $B_{\tilde p}$ the set $\{\rho_\pm=0\}$. The projection map $P_{\tilde p}$ of the open book decomposition is $(\theta,x,y)\mapsto\theta$. Therefore the pages are annuli.

We'll use coordinates $x,\hat y$ on the zero page of the open book (that is, $\theta=0$). The gluing from $T_+$ to $\partial_+M_\psi$ is given by the map
\[
g_+^{-1}(\rho_+,\mu_+,t_+)=\left(1-\rho_+^2, t_+, \frac{1}{2\pi}\mu_+\right)
\]
and the flow of $\partial_\theta$ is a reparameterization of the flow of $\partial_{\mu_+}$, and so sends $\hat y$ to itself under one full rotation. Therefore the monodromy is the identity on $T_+$.

In $\mathring M_\psi$, the flow of $\partial_\theta$ brings the annulus parameterized by $x,\hat y$ back to itself. Near $\partial_+M_\psi$, the coordinate $\hat y=y+2\pi y_+\theta$ flows from itself to itself, because $M_\psi$ is the mapping torus of $\psi$, which is rotation by $y_+$ on $\partial_+A$. Near $\partial_-M_\psi$, the coordinate which is invariant under the flow by $\theta$ is $\check y=\hat y+2\pi(y_--y_+)\theta$. In other words, the coordinate $\hat y$ flows to a coordinate invariant under the flow plus $y_+-y_-$, meaning that the monodromy of the open book decomposition consists of $y_+-y_-$ right-handed Dehn twists about the core circle of the annulus, composed with the contribution to the monodromy from $T_-$.

Near $T_-$, if we think of the gluing as from $T_-$ to $\partial_-M_\psi$, we get the map
\[
g_-^{-1}(\rho_-,t_-,\mu_-)=\left(\rho_-^2-1,\frac{1}{2\pi}\mu_-,t_-+F\mu_-\right)
\]
Therefore under the flow of $\partial_{\mu_-}$ for time $2\pi$, the coordinate $\check y$ increases by $2\pi F$. Therefore the monodromy consists of $y_+-y_-+F=\tilde p$ Dehn twists about the core circle of the $x$, $\hat y$ annulus. Therefore the open book decomposition $(B_{\tilde p},\Pi_{\tilde p})$ induces the abstract open book $(A,D_{\tilde p})$. As shown in Lemma \ref{lem:stdlens}, $L(\tilde p,\tilde p-1)$ also admits an open book decomposition with abstract open book $(A,D_{\tilde p})$, so $Y_{\tilde\psi}\cong L(\tilde p,\tilde p-1)$.

\subsubsection*{Step 4: Contact geometry}

We extend $\lambda_0$ to a contact form $\lambda_{\tilde\psi}$ on $L(\tilde p,\tilde p-1)$. Near $\partial_+ M_\psi$,
\[
\lambda_0=f(x,y)\,d\theta+\frac{x}{2\pi}\,dy=(f(x,y)-xy_+)\,d\theta+\frac{x}{2\pi}\,d\hat y=\frac{\rho_+^2y_+}{2\pi}\,d\mu_++\frac{1-\rho_+^2}{2\pi}\,dt_+
\]
while near $\partial_-M_\psi$,
\[
\lambda_0=f(x,y)\,d\theta+\frac{x}{2\pi}\,dy=(f(x,y)-xy_-)\,d\theta+\frac{x}{2\pi}\,d\check y=\frac{\rho_-^2(-y_-+F)}{2\pi}\,d\mu_-+\frac{\rho_-^2-1}{2\pi}\,dt_-
\]
By converting to Cartesian coordinates, we see that $d\mu_\pm$ have poles of order one when $\rho_\pm=0$. The coefficient of $d\mu_\pm$ in both of the above reparameterizations of $\lambda_0$ has a zero of order two as $\rho_\pm\to0$. Therefore $\lambda_0$ extends over $\rho_\pm=0$ to a one-form $\lambda_{\tilde\psi}$ on $L(\tilde p,\tilde p-1)$.

In order to check that $\lambda_{\tilde\psi}$ is contact, we compute near $\partial_+M_\psi$ that
\[
d\lambda_{\tilde\psi}=\frac{\rho_+y_+}{\pi}\,d\rho_+\wedge d\mu_+-\frac{\rho_+}{\pi}\,d\rho_+\wedge dt_+\Rightarrow\lambda_{\tilde\psi}\wedge d\lambda_{\tilde\psi}=\frac{y_+}{2\pi^2}\rho_+\,dt_+\wedge d\rho_+\wedge d\mu_+
\]
and near $\partial_-M_\psi$,
\[
d\lambda_{\tilde\psi}=\frac{\rho_-(-y_-+F)}{\pi}\,d\rho_-\wedge d\mu_-+\frac{\rho_-}{\pi}\,d\rho_-\wedge dt_-\Rightarrow\lambda_{\tilde\psi}\wedge d\lambda_{\tilde\psi}=\frac{-y_-+F}{2\pi^2}(-\rho_-\,dt_-\wedge d\rho_-\wedge d\mu_-)
\]

These are both positive multiples of the volume forms $\pm\rho_\pm\,dt_\pm\wedge d\rho_\pm\wedge d\mu_\pm$ on $T_\pm$ (these are the volume forms translated from the standard volume forms in Cartesian coordinates, with signs according to the orientations of $T_\pm$ discussed in Step 2).

Near $\partial_\pm M_\psi$, respectively, the Reeb vector field is
\[
R_{\tilde\psi}=\frac{2\pi}{y_+}\partial_{\mu_+}+2\pi\partial_{t_+}\text{ and }R_{\tilde\psi}=\frac{2\pi}{-y_-+F}\partial_{\mu_-}-2\pi\partial_{t_-}
\]
By converting to Cartesian coordinates, we see that $\partial_{\mu_\pm}\to0$ as $\rho_\pm\to0$. Therefore the circles $\rho_\pm=0$ are the images of Reeb orbits, both of action 1. Denote these orbits by $e_\pm$.

It remains to show that these orbits are nondegenerate and elliptic, and to compute their rotation numbers. We first find oriented bases for $\xi_{\tilde\psi}$ over the $e_\pm$. Note
\[
\lambda_{\tilde\psi}|_{\rho_+=0}=\frac{1}{2\pi}\,dt_+\text{ and }\lambda_{\tilde\psi}|_{\rho_-=0}=-\frac{1}{2\pi}\,dt_-
\]
Therefore $\xi_{\tilde\psi}|_{e_\pm}$ is the bundle of tangent spaces to the disks $t_\pm=const$. Since $\lambda_{\tilde\psi}|_{\rho_+=0}$ orients $e_+$ in the positive $t_+$ direction and $\lambda_{\tilde\psi}\wedge d\lambda_{\tilde\psi}$ is a positive volume form, $d\lambda_{\tilde\psi}$ must give $\xi_{\tilde\psi}|_{e_+}$ the orientation which the disks $t_+=const$ inherit from the orientation $t_+,\rho_+,\mu_+$ on $T_+$. That is the standard orientation $\rho_+,\mu_+$ of the disk. On the negative side, because $\lambda_{\tilde\psi}|_{\rho_-=0}$ orients $e_-$ in the negative $t_-$ direction and $\lambda_{\tilde\psi}\wedge d\lambda_{\tilde\psi}$ is a positive volume form, $d\lambda_{\tilde\psi}$ must give $\xi_{\tilde\psi}|_{e_-}$ the orientation which the disks $t_-=const$ inherit from the orientation $-t_-,\rho_-,\mu_-$ on $T_-$. That is also the standard orientation $\rho_-,\mu_-$ of the disk.

We can now compute the rotation numbers, including signs. In the product trivialization of $\xi_{\tilde\psi}|_{e_\pm}$, the rotation numbers are the coefficients of $\partial_{\mu_\pm}$ in $R_{\tilde\psi}|_{e_\pm}$ divided by the coefficients of $\pm\partial_{t_\pm}$ in $R_{\tilde\psi}|_{e_\pm}$. These are $\frac{1}{y_+}$ and $\frac{1}{-y_-+F}$, respectively. Because these are irrational numbers, the $e_\pm$ are both elliptic.

\subsubsection*{Step 5: The desired properties}

We constructed a contact form $\lambda_{\tilde\psi}$ on a closed oriented three-manifold diffeomorphic to $L(\tilde p,\tilde p-1)$. We computed its Reeb vector field, identified the binding orbits $e_\pm$, showed they were elliptic, and computed their action and rotation numbers.

Throughout, $R_{\tilde\psi}\sim\partial_\theta$ except at $e_\pm$ where $R_{\tilde\psi}\sim\pm\partial_{t_\pm}$, and in Step 1 we computed that the the return time is given by $f$, which is always finite. Therefore $(B_{\tilde p},P_{\tilde p})$ is adapted to $\lambda_{\tilde\psi}$ and $(\lambda_{\tilde\psi},B_{\tilde p},P_{\tilde p})$ has $\psi$ as its return map. In Step 3 we computed that the abstract open book of $(B_{\tilde p},P_{\tilde p})$ is $(A,D_{\tilde p})$.

The bijection $\P(\psi)\cup\{|B_{\tilde p}|\}\to\P(\lambda_{\tilde\psi})$ follows from the fact that away from $\rho_\pm=0$, the Reeb vector field is parallel to $\partial_\theta$. The effect of the bijection on action and period follow from the computation of the return time and return map.

\end{proof}

\section{Review of ECH and the ECH spectral numbers}\label{sec:reviewech}

Embedded contact homology (ECH) is a smooth three-manifold invariant constructed using a contact form. We will use two different filtrations on it to analyze the Reeb dynamics of $(L(\tilde p,\tilde p-1),\lambda_{\tilde\psi})$. In this section we review the construction of ECH as well as the construction of the ECH spectral numbers, which are defined using an action filtration.

\subsection{Embedded Contact Homology}\label{subsec:ech}

Let $Y$ be a closed connected oriented three-manifold with a nondegenerate contact form $\lambda$ and Reeb vector field $R$. An \emph{orbit set} of $R$ is a finite set of pairs $(\alpha_i,m_i)$ where the $\alpha_i$ are distinct simple Reeb orbits, the $m_i$ are positive integers, and their \emph{total homology class} is zero, meaning that
\[
\sum_im_i[\alpha_i]=0\in H_1(Y;\Z)
\]
(ECH is can be defined in more general total homology classes; see \cite{echlec}.) For a $\lambda$-compatible (see Definition \ref{defn:lcacs}) almost-complex structure $J$ on $\R\times Y$ we define the \emph{embedded contact homology chain complex} $ECC_*(Y,\lambda,J)$ to be the $\Z_2$-vector space generated by orbit sets $\{(\alpha_i,m_i)\}$ for which $m_i=1$ whenever $\alpha_i$ is hyperbolic (it is possible to extend to $\Z$ coefficients, which we do not need to do; again see \cite{echlec}). We use multiplicative notation for orbit sets, e.g., $\{(\alpha_i,m_i)\}=\alpha_1^{m_1}\cdots\alpha_n^{m_n}$, and additive notation for the group operation in the chain groups, e.g. $x=x_1+\cdots+x_n$ for $x\in ECC_*(Y,\lambda,J)$, where the $x_i$ are generators.

For $\alpha=\{(\alpha_i,m_i)\}$, let $H_2(Y,\alpha,\emptyset)$ denote the set of 2-chains $Z$ for which
\[
\partial Z=\sum_im_i\alpha_i
\]
modulo those which are boundaries of 3-chains. Note $H_2(Y,\alpha,\emptyset)$ is affine over $H_2(Y;\Z)$.

Recall that $\xi=\ker\lambda$ is a 2-plane bundle. If $c_1(\xi)\in H^2(Y;\Z)$ is torsion, then by \cite[Proposition 1.6]{indexineq}, $ECC(Y,\lambda,J)$ is absolutely $\Z$-graded by the \emph{ECH index}
\[
I(\alpha)=c_\tau(Z_\alpha)+Q_\tau(Z_\alpha)+\sum_i\sum_{k=1}^{m_i}CZ_\tau(\alpha_i^k)
\]
where $Z_\alpha$ is any element of $H_2(Y,\alpha,\emptyset)$, $\tau$ is a trivialization of $\alpha^*\xi$, and $\alpha_i^k$ denotes the connected $k$-fold cover of the simple orbit $\alpha_i$. We recall the definitions of the components $c_\tau$, $Q_\tau$, and $CZ_\tau$ of the ECH index here. See \cite[\S2]{indexineq} for further explanation.

%It is motivating to keep in mind the various purposes of each of the terms: $c_\tau$ measures contact topology, $Q_\tau$ measures algebraic topology, and $CZ_\tau$ measures contact geometry.

We denote by $c_\tau(Z_\alpha)$ the \emph{relative first Chern class} of $\xi|_{Z_\alpha}$ with respect to $\tau$. It is determined by choosing a surface $S$ representing $Z_\alpha$, a section of $\xi$ over $\partial S$ constant with respect to $\tau$, extending this section over $S$, and counting its zeroes with sign. In the computations in Section \ref{subsec:computeech}, we will use the following formula for $c_{\tau'}$ in terms of $c_\tau$, where $\tau'$ is another trivialization of $\alpha^*\xi$. Let $\tau_i,\tau'_i$ denote the restrictions of $\tau,\tau'$ to $\alpha_i$. Then
\begin{equation}\label{eqn:ctauprime}
c_\tau(Z_\alpha)-c_{\tau'}(Z_\alpha)=\sum_im_i(\tau'_i-\tau_i)
\end{equation}
Let $\overset{h.e.}{\sim}$ denote homotopy equivalence. We use $\tau'_i-\tau_i$ to denote the degree of the map
\[
\tau_i\circ{\tau'_i}^{-1}:\alpha_i\to Sp(2,\R)\overset{h.e.}{\sim}S^1
\]

We denote by $Q_\tau(Z_\alpha)$ the \emph{relative self-intersection number} of $Z_\alpha$. Let $\Sigma$ be a surface in $[0,1]\times Y$ for which
\begin{enumerate}
\item $\partial\Sigma$ is a union of $m_i$ positively oriented one-fold covers of $\{1\}\times\alpha_i$.
\item $\pi_Y(\Sigma)$ represents $Z_\alpha$.
\item $\Sigma$ is embedded on the interior of $[0,1]\times Y$, transverse to the boundary, and $\pi_Y|_S$ is an immersion near $\partial S$.
\item Near $\{1\}\times Y$, the projection of $S$ to a plane transverse to $\alpha_i$ consists of a union of rays which do not intersect and do not rotate with respect to $\tau$ as we traverse $\alpha_i$.
\end{enumerate}
Let $\Sigma,\Sigma'$ be two such surfaces which do not intersect near $\{1\}\times Y$. Then $Q_\tau(Z_\alpha)$ is the signed count of the intersections of $\Sigma$ and $\Sigma'$ (which necessarily arise only in their interiors). We again have a change-of-trivialization formula
\begin{equation}\label{eqn:Qtauprime}
Q_\tau(Z_\alpha)-Q_{\tau'}(Z_\alpha)=\sum_im_i^2(\tau'_i-\tau_i)
\end{equation}

Let $\gamma:\R/T\Z\to Y$ be a periodic orbit of $R$ and $\tau$ a trivialization of $\gamma^*\xi$. We denote by $CZ_\tau(\gamma)$ the \emph{Conley-Zehnder index} of $\gamma$ with respect to $\tau$. For all $t\in[0,T]$ there is a symplectic linear map $\phi_t:\xi|_{\gamma(0)}\to\xi|_{\gamma(t)}$ given by the restriction of the differential of the time $t$ Reeb flow to the contact planes. Under conjugation by $\tau$, the  $\phi_t$ trace out a path in $Sp(2,\R)$. Because $\lambda$ is nondegenerate, this path ends at a matrix which does not have $\pm1$ as an eigenvalue. Thus it has a well-defined intersection number with the \emph{Maslov cycle}, the symplectic matrices which do have $\pm1$ as an eigenvalue (in $Sp(2,\R)$ these are the parabolic matrices). This intersection number is the \emph{Conley-Zehnder index} of $\gamma$. If $\gamma$ is a simple elliptic orbit then the path $\phi_t$ is homotopic to rotation by a family of angles $2\pi\theta_t$, where $\theta_T=\rot_\tau(\gamma)$. Via the picture of $Sp(2,\R)$ as an open solid torus in \cite{segal}, we can directly compute the intersection number of this family with the Maslov cycle to obtain
\[
CZ_\tau(\gamma^k)=2\lfloor k\theta\rfloor+1
\]
If $\gamma$ is hyperbolic and $v$ is an eigenvector of $\phi_T$, then $\{\phi_t(v)\}$ is a family of vectors in $\R^2$ which rotate by angle $\pi k$ for some integer $k$, where $k$ is even when $\gamma$ is positive hyperbolic, and odd when $\gamma$ is negative hyperbolic. Again via the open solid torus model of $Sp(2,\R)$, we can explicitly compute the intersection number with the Maslov cycle to obtain
\[
CZ_\tau(\gamma)=k
\]

Finally we also have a change-of-trivialization formula for $CZ_\tau(\gamma)$, when $\gamma$ is simple:
\begin{equation}\label{eqn:CZtauprime}
CZ_\tau(\gamma^k)-CZ_{\tau'}(\gamma^k)=2k(\tau-\tau')
\end{equation}

Next we turn to the definition of the differential in ECH.

\begin{defn}\label{defn:lcacs} An almost-complex structure $J$ on $\R\times Y$ is \emph{$\lambda$-compatible} if it is $\R$-invariant, $J\xi=\xi$, rotating positively with respect to $d\lambda$, and $J\partial_s=R$, where $s$ is the $\R$ coordinate.
\end{defn}

\begin{defn}\label{defn:Jholomcurrent} A \emph{$J$-holomorphic current from $\alpha$ to $\beta$} is a finite set of pairs $(C_k,d_k)$ where the $C_k$ are distinct irreducible somewhere-injective $J$-holomorphic curves whose positive ends are at covers of the $\alpha_i$, whose negative ends are at covers of the $\beta_j$, the sum over $k$ of $d_k$ times the covering multiplicity of $\alpha_i$ ($\beta_j$) equals $m_i$ ($n_j$), and there are no other ends.
\end{defn}

Denote by $\M^J(\alpha,\beta)$ the set of $J$-holomorphic currents from $\alpha$ to $\beta$. Notice that there is an $\R$ action on $\M^J(\alpha,\beta)$ by postcomposing the maps parameterizing each component curve by translation in the $\R$ direction. The coefficient of $\beta$ in $\partial\alpha$ is given by
\[
\langle\partial\alpha,\beta\rangle:=\sum_{I(\alpha)-I(\beta)=1}\#\frac{\M^J(\alpha,\beta)}{\R}
\]
where $\#$ denotes the mod 2 count. That this count is finite is shown in \cite{echlec}; that $\partial^2=0$ is shown in \cite{gluing}, and that the resulting homology depends only on $Y$ and $\xi$ rather than on $\lambda$ and $J$ is shown in \cite{indepform}. Therefore we denote the homology of $(ECC_*(Y,\lambda,J),\partial)$ by
\[
ECH_*(Y,\xi)
\]
We call $ECH_*(Y,\xi)$ the \emph{embedded contact homology}, or \emph{ECH}, of $(Y,\xi)$.

\subsection{The ECH spectral numbers}\label{subsec:spectrum}

The ECH spectral numbers allow us to understand the relationship between symplectic action and ECH homology classes. The symplectic action of an orbit set is the linear combination of the actions of its component orbits, that is,
\[
\A(\{(\alpha_i,m_i)\}):=\sum_im_i\int_{\alpha_i}\lambda
\]
If there is a $J$-holomorphic current from $\alpha$ to $\beta$ for any $\lambda$-compatible $J$, then by Stokes' theorem, $\A(\alpha)\geq\A(\beta)$, with equality only if $\alpha=\beta$. In particular, if $\langle\partial\alpha,\beta\rangle\neq0$, then the action of $\alpha$ is greater than that of $\beta$. Therefore, for any $L\in\R$, the span of the generators $\alpha$ with $\A(\alpha)<L$ is a subcomplex, denoted by $ECC_*^L(Y,\lambda,J)$. Its homology is called the \emph{(action-)filtered ECH}, and it is shown in \cite{arnoldchord} that this homology is independent of $J$, though it does depend on the contact form $\lambda$ rather than just on the contact structure $\xi$.

There are maps induced by inclusion of chain complexes
\begin{equation}\label{eqn:L}
ECH_*^L(Y,\lambda)\to ECH_*(Y,\xi)
\end{equation}
which are also shown in \cite{arnoldchord} to be independent of $J$.

Let $\sigma$ be a nonzero class in $ECH_*(Y,\xi)$. Define the \emph{ECH spectral number of $\sigma$} as
\[
c_\sigma(Y,\lambda):=\inf\{L\;|\;\sigma\text{ is in the image of }ECH_*^L(Y,\lambda)\to ECH_*(Y,\xi)\}
\]
That is, $c_\sigma(Y,\lambda)=L_0$ when there is a cycle $x=\sum_ix_i$ representing $\sigma$ for which each orbit set $x_i$ has $\A(x_i)\leq L_0$, and $L_0$ is the least positive number with this property.

The following theorem is a special case of the original theorem.
\begin{thm}[Cristofaro-Gardiner-Hutchings-Ramos, {\cite[Theorem 1.3]{asymp}}]\label{thm:asymp} Let $Y$ be a closed connected three-manifold with contact form $\lambda$ and contact structure $\xi=\ker\lambda$ with $c_1(\xi)$ torsion. Let $I$ denote the absolute $\Z$-grading on $ECH_*(Y,\xi)$ by the ECH index. Let $\{\sigma_k\}_{k\geq1}$ be a sequence of nonzero homogeneous classes in $ECH_*(Y,\xi)$ with $\lim_{k\to\infty}I(\sigma_k)=\infty$. Then
\[
\lim_{k\to\infty}\frac{c_{\sigma_k}(Y,\lambda)^2}{I(\sigma_k)}=\vol(Y,\lambda)
\]
\end{thm}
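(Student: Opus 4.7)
The plan is to establish two-sided asymptotic inequalities $\limsup_k c_{\sigma_k}(Y,\lambda)^2 / I(\sigma_k) \leq \vol(Y,\lambda)$ and $\liminf_k c_{\sigma_k}(Y,\lambda)^2 / I(\sigma_k) \geq \vol(Y,\lambda)$. Since both sides scale quadratically under $\lambda \mapsto s\lambda$ (the action scales by $s$ and the volume by $s^2$, while the ECH index is unchanged), the statement is really a Weyl-law-type asymptotic for the density of ECH-index gradings per unit of action.

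For the upper bound, my plan would be to approximate $(Y,\lambda)$ by a sequence of contact forms $\lambda_n$ with very simple, explicitly understood Reeb dynamics — for instance, Morse--Bott forms whose Reeb flows come from toric or plumbing-type models built out of standard neighborhoods of the binding orbits of an open book decomposition, similar to the construction in Section \ref{sec:contmfld}. In such models one can enumerate orbit sets by action, and a direct lattice point count in the space of admissible multiplicities shows that the $k$th spectral number (for the natural exhausting sequence of homology classes obtained by iterating the $U$-map) grows like $\sqrt{k \cdot \vol(Y,\lambda_n)}$. Continuity of ECH spectral numbers under $C^0$-small perturbations of the contact form, together with continuity of the volume, would then transport the estimate from $\lambda_n$ to $\lambda$ after sending $n\to\infty$ and using that the $\sigma_k$ form any sequence with $I(\sigma_k)\to\infty$.

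For the lower bound, the essential input is the Taubes isomorphism between embedded contact homology and Seiberg--Witten Floer cohomology, which guarantees enough holomorphic currents to realize every nonzero ECH class as a cycle of orbit sets. Given $\sigma_k$, pick a cycle $\sum_i x_i$ representing it with $\max_i \A(x_i)$ close to $c_{\sigma_k}(Y,\lambda)$, and consider the orbit set $x_i$ of largest action. The ECH index formula expresses $I(x_i)$ as a quadratic-type expression in the multiplicities $m_j$ of the constituent simple orbits $\alpha_j$, with Conley--Zehnder contributions controlled by rotation numbers and a $Q_\tau$ term that is essentially a self-intersection. Summing these contributions against the action constraint $\sum_j m_j \int_{\alpha_j} \lambda \leq L$ and invoking the asymptotic equidistribution of rotation numbers (which is where the volume $\int_Y \lambda \wedge d\lambda$ appears as the relevant measure) gives an inequality $I(x_i) \leq L^2 / \vol(Y,\lambda) + o(L^2)$, hence the desired lower bound on $c_{\sigma_k}$.

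The main obstacle is the lower bound: one must actually produce orbit sets realizing a given homology class, and control their ECH indices via a quadratic-form estimate whose leading coefficient is precisely $1/\vol(Y,\lambda)$. This is where Seiberg--Witten theory enters non-removably, since without it one has no guarantee that $\sigma_k$ is represented by an orbit set of modest action. A secondary subtlety is the dependence of $c_\tau$, $Q_\tau$, and $CZ_\tau$ on the trivialization $\tau$, which must be handled using the change-of-trivialization formulas $(\ref{eqn:ctauprime})$, $(\ref{eqn:Qtauprime})$, and $(\ref{eqn:CZtauprime})$ so that the final asymptotic is trivialization-independent, as the hypothesis that $c_1(\xi)$ is torsion ensures.
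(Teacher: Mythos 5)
This statement is a cited theorem: the paper quotes \cite[Theorem 1.3]{asymp} without reproving it, so there is no in-paper argument to compare against. Judged against the proof of Cristofaro-Gardiner--Hutchings--Ramos, your two-sided skeleton (proving $\limsup \leq \vol$ and $\liminf \geq \vol$ separately) is the right decomposition, but you have assigned the main technical tools to the wrong halves, and each half has a genuine gap. In the actual argument, Seiberg--Witten theory enters irreducibly in the \emph{upper} bound: to show $\limsup_k c_{\sigma_k}^2/I(\sigma_k)\leq\vol(Y,\lambda)$, one must produce for each nonzero class $\sigma_k$ a representing cycle whose action is not much more than $\sqrt{\vol(Y,\lambda)\cdot I(\sigma_k)}$, and this is done via a priori energy estimates for solutions of the perturbed Seiberg--Witten equations, transported through Taubes' isomorphism. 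The \emph{lower} bound $\liminf_k c_{\sigma_k}^2/I(\sigma_k)\geq\vol(Y,\lambda)$ is not an index-formula computation; it rests on area estimates for the $J$-holomorphic curves counted by iterates of the $U$-map.

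Two concrete gaps. For your upper bound, $C^0$-continuity of ECH spectral numbers compares contact forms $\lambda'=f\lambda$ with $f$ uniformly close to $1$; a Morse--Bott or toric model contact form is not such a small perturbation of a general $\lambda$, only the endpoint of a long isotopy, along which the spectral numbers can drift by an amount of the same order as the quantity you are trying to estimate, so the proposed approximation scheme does not close. For your lower bound, the inequality $I(x_i)\leq L^2/\vol(Y,\lambda)+o(L^2)$ for every orbit set of action at most $L$ does not follow from the ECH index formula: for a general Reeb flow there is no equidistribution theorem for rotation numbers (the flow may have very few closed orbits), and the asserted inequality is essentially the Weyl law restated. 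Finally, invoking Taubes' isomorphism to ``guarantee enough holomorphic currents to realize every nonzero ECH class as a cycle of orbit sets'' is unnecessary: a nonzero ECH class has a cycle representative by definition of ECH as the homology of the orbit-set chain complex. The nontrivial existence question --- whether such a representative can be chosen with action bounded above by roughly $\sqrt{\vol(Y,\lambda)\cdot I(\sigma_k)}$ --- is where Seiberg--Witten theory actually enters, and that is the upper-bound direction, not the one you invoke it for.
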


We can use \cite[Theorem 1.3]{asymp} to obtain Reeb orbits in infinitely many ECH homology classes for which we have a bound on symplectic action.

\section{The knot filtration on ECH}\label{sec:kfech}

The previous section allowed us to discuss the action of periodic orbits of a Reeb vector field, which will help us understand the total action of the periodic orbits of $\psi$. In order to obtain a bound on the periods of these orbits, we need to investigate the intersection number of Reeb orbits with a page of an open book decomposition. We will generalize of the knot filtration defined by Hutchings in \cite{mean} for contact manifolds $Y$ with $H_1(Y)=0$ to contact manifolds $Y$ with $b_1(Y)=0$. To apply the filtration to annulus maps we will compute the embedded contact homology of the lens space $L(p,p-1)$ with its contact form from Lemma \ref{lem:stdlens} filtered by the sum of the filtrations by the binding components of the open book decomposition from Proposition \ref{prop:constructcontact}.

\subsection{Construction of the knot filtration}\label{subsec:constructkfech}

Let $Y$ be a closed oriented three manifold with $b_1(Y)=0$ and a contact form $\lambda$. Let $J$ be a $\lambda$-compatible almost-complex structure on $\R\times Y$ as above. Given an elliptic Reeb orbit $B$, there is a filtration on the ECH chain complex by a modified version of the linking number of each orbit set with $B$. Unlike the action filtration, this knot filtration is independent of the contact form up to the rotation number of $B$.

In the case when the chosen orbit is the binding of an open book decomposition, there is a related filtration on contact homology (see \cite{reebobd}).

Let $B$ be a simple elliptic Reeb orbit of $\lambda$ and let $B^p$ be a nullhomologous cover of $B$. Let $p\rot(B)\in\R-\Q$ be the rotation number of $B^p$ in the trivialization which has linking number zero with $B^p$ with respect to $\Sigma_{B^p}$, where $\Sigma_{B^p}$ is a Seifert surface for $B^p$. That is,
\[
\rot(B):=\frac{1}{p}\rot_{\tau{\Sigma_{B^p}}}(B^p)
\]
Note that when $p>1$, $\rot(B)$ is not truly a rotation number computed in an honest trivialization. Define the function
\[
\F_B(B^m\alpha):=m\rot(B)+lk(B,\alpha)
\]
for $\alpha$ any orbit set of $\lambda$ not including $B$.

\begin{lem}\label{lem:pffiltration} $\F_B$ is a filtration on $ECC_*(Y,\lambda,J)$ for any $\lambda$-compatible $J$. That is, the differential decreases $\F_B$.
\end{lem}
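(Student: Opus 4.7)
The plan is to adapt the argument Hutchings gives in \cite{mean}, which proves the analogous statement when $H_1(Y)=0$, to the setting $b_1(Y)=0$. The only genuinely new setup issue is that linking numbers with $B$ become rational: since $b_1(Y)=0$, the orbit $B$ has finite order $p$ in $H_1(Y;\Z)$ and its cover $B^p$ bounds a Seifert surface $\Sigma_{B^p}$, so for any cycle $\gamma$ disjoint from $B$ we set $lk(B,\gamma):=\tfrac{1}{p}(\gamma\cdot\Sigma_{B^p})$, consistent with the definition of $\rot(B)$ above. The lemma then reduces to showing that for every $J$-holomorphic current $\mathcal{C}$ contributing to $\langle\partial(B^m\alpha),B^n\beta\rangle$, with $\alpha,\beta$ orbit sets not containing $B$,
\[
m\,\rot(B)+lk(B,\alpha)\;\geq\;n\,\rot(B)+lk(B,\beta).
\]

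First I would decompose $\mathcal{C}$ into its somewhere-injective components together with a possibly present copy of $\R\times B$ carrying some multiplicity. Because $B$ is a Reeb orbit, $\R\times B$ is itself $J$-holomorphic, so positivity of intersection forces $C_k\cdot(\R\times B)\geq 0$ for every other component $C_k$, and hence the interior algebraic intersection $\mathcal{C}\cdot_{\mathrm{int}}(\R\times B)$ is non-negative. The second step is to compute this same interior intersection topologically by choosing cross-sections at $s=\pm\infty$: the projection $\pi_Y(\mathcal{C})$ outside small tubular neighborhoods of the ends is a 2-chain whose intersection with $B$ records $lk(B,\alpha)-lk(B,\beta)$ plus corrections coming from the braids produced by the ends of $\mathcal{C}$ at covers of $B$.

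The standard asymptotic analysis at elliptic Reeb orbits, combined with the ECH partition conditions on ends (both explained in \cite{mean} and the references therein), then shows that an ECH-admissible positive end at $B^m$ contributes a braid strand whose asymptotic winding number around $B$, measured in the trivialization $\tau_{\Sigma_{B^p}}$, is at most $\lfloor m\,\rot(B)\rfloor$, while a negative end at $B^n$ contributes winding at least $\lceil n\,\rot(B)\rceil$. Combined with the positivity estimate this gives
\[
0\;\leq\;\bigl(lk(B,\alpha)-lk(B,\beta)\bigr)+\lfloor m\,\rot(B)\rfloor-\lceil n\,\rot(B)\rceil,
\]
and since $p\,\rot(B)$ is irrational, for every positive integer $k$ we have $\lfloor k\,\rot(B)\rfloor<k\,\rot(B)<\lceil k\,\rot(B)\rceil$; together with the fact that the correction terms simply vanish when $m$ or $n$ is zero, this lets us rearrange to $\F_B(B^m\alpha)\geq\F_B(B^n\beta)$. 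The main obstacle I anticipate is the careful bookkeeping when $B$ is only rationally nullhomologous: the trivialization $\tau_{\Sigma_{B^p}}$ lives naturally on $(B^p)^*\xi$ rather than on $B^*\xi$, so the winding-number bounds at a $B^m$-end must be imported through the $p$-fold cover, carrying with them the $\tfrac{1}{p}$ factors built into $\rot(B)$ and $lk(B,\cdot)$. Once this is in order, positivity of intersections and the partition-condition winding bounds proceed formally as in \cite{mean}.
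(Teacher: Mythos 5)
Your proof is essentially the paper's own argument: decompose the current, use intersection positivity between the non-trivial components and $\R\times B$, and bound the winding numbers of the asymptotic braids near the ends at covers of $B$ by $\lfloor q_i\rot(B)\rfloor$ (positive) and $\lceil q_i\rot(B)\rceil$ (negative), pulling the $\tau_{\Sigma_{B^p}}$-trivialization through the $p$-fold cover with the appropriate $\tfrac{1}{p}$ factors exactly as you note. One small remark: invoking the ECH partition conditions is unnecessary here --- the relevant winding bound for asymptotic eigenfunctions at a nondegenerate elliptic orbit (e.g.\ \cite[Lemma 5.3(b)]{echlec}) holds for every $J$-holomorphic curve, and the paper never uses the refined partition structure; also, since the ends at $B^{m_\pm}$ may split over several covers $B^{q_i}$ with $\sum q_i=m_\pm$, the bound one really records is $\sum_i\lfloor q_i\rot(B)\rfloor\le m_+\rot(B)$ (and dually for the negative side), which implies the single-end $\lfloor m\rot(B)\rfloor$ statement you write but is the form actually needed when there are multiple ends.
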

\begin{proof} The proof is similar to the proof of \cite[Lemma 5.1]{mean}, with adjustments as necessary to account for the role played by covers of $B$.

Let $\alpha_\pm$ be orbit sets of $(Y,\lambda)$ not including $B$. Assume that there is a $J$-holomorphic current $\mathcal{C}\in\M^J(B^{m_+}\alpha_+,B^{m_-}\alpha_-)$. Because the definition of $\F_B$ is linear in the orbits, we can assume that $\mathcal{C}$ consists of one irreducible somewhere injective $J$-holomorphic curve $C$ which is not the trivial cylinder $\R\times B$. Our goal is to show that
\begin{equation}\label{eqn:filtineq}
\F_B(B^{m_+}\alpha_+)\geq\F_B(B^{m_-}\alpha_-)
\end{equation}

From \cite[Corollary 2.6]{cylasymptotics} we get that when $s_0>0$ is a large enough, $C$ is transverse to $\{\pm s_0\}\times Y$. From \cite[Corollary 2.5]{cylasymptotics} we get that $C\cap\R\times B\cap((-\infty,-s_0]\times Y)$ and $C\cap\R\times B\cap([s_0,\infty)\times Y)$ are both empty.

For $s_0$ large enough in the above sense, let $\eta_\pm$ denote $C\cap\{\pm s_0\}\times Y$. Notice that $\eta_\pm$ splits as a pair of links: one which approaches $\alpha_\pm$ as $s_0\to\infty$ and one which approaches $B^{m_\pm}$ as $s_0\to\infty$. Choose $s_0$ large enough so that the link approaching $\alpha_\pm$ has linking number $lk(B,\alpha_\pm)$ with $B$. Call the other link $\zeta_\pm$. Then we have
\begin{equation}\label{eqn:linkingsum}
lk(B,\eta_\pm)=lk(B,\zeta_\pm)+lk(B,\alpha_\pm)
\end{equation}

Let $\zeta_+^1,\dots,\zeta_+^k$ denote the components of $\zeta_+$. Let $q_i$ denote the covering multiplicity of $\zeta_+^i$; in particular, $\sum_{i=1}^kq_i=m_+$. By \cite[Lemma 5.3 (b)]{echlec} and the discussion following its proof,
\[
lk(B,\zeta_+^i)=\frac{1}{p}\text{wind}_{\tau_{\Sigma_{B^p}}}(\zeta_+^i)\leq\frac{1}{p}\lfloor q_ip\rot(B)\rfloor=\lfloor q_i\rot(B)\rfloor
\]
therefore
\begin{equation}\label{eqn:zetapluslk}
lk(B,\zeta_+)=\sum_{i=1}^klk(B,\zeta_+^i) \leq\sum_{i=1}^k\lfloor q_i\rot(B)\rfloor \leq\sum_{i=1}^kq_i\rot(B) =m_+\rot(B)
\end{equation}
and similarly
\begin{equation}\label{eqn:zetaminuslk}
lk(B,\zeta_-)\geq m_-\rot(B)
\end{equation}

Finally,
\begin{equation}\label{eqn:intpos}
lk(B,\eta_+)-lk(B,\eta_-)=\#C\cap(\R\times B)\geq0
\end{equation}
where the inequality comes from intersection positivity for $J$-holomorphic curves. Therefore, by combining (\ref{eqn:intpos}), (\ref{eqn:linkingsum}), (\ref{eqn:zetapluslk}), and (\ref{eqn:zetaminuslk}), we obtain (\ref{eqn:filtineq}) as desired.
\end{proof}

Let $\ell\in\R$. Denote by
\[
ECH_*^{\F_B\leq\ell}(Y,\lambda,J)
\]
the homology of the subcomplex of $ECC_*(Y,\lambda,J)$ generated by admissible orbit sets $B^m\alpha$ with $\F_B(B^m\alpha)\leq\ell$. We call $ECH_*^{\F_B\leq\ell}(Y,\lambda,J)$ the \emph{knot filtered embedded contact homology of $(Y,\lambda)$}, or the \emph{embedded contact homology of $(Y,\lambda)$ filtered by $B$} when we want to emphasize the elliptic orbit $B$. We will show that this homology is an invariant of the contact structure up to contactomorphism, knot transverse to the contact structure, and its rotation number, and does not depend on the contact form.

However, we will need invariance of a slightly different filtration of the embedded contact homology of $(L(\tilde p,\tilde p-1),\lambda_{\tilde\psi})$. We are interested in putting a lower bound on $\alpha\cdot A_0$. As a sum of filtrations, $\F_{e_+}+\F_{e_-}$ is a filtration as well, and we will show in the proof of Proposition \ref{prop:pairineqscontact} that for an orbit set $\alpha$ not including either binding component $e_\pm$ of the open book decomposition $(B_{\tilde p},P_{\tilde p})$ of $L(\tilde p,\tilde p-1)$, we have $\F_{e_+}(\alpha)+\F_{e_-}(\alpha)=\alpha\cdot A_0$.  Therefore, we will compute $ECH_*^{\F_{e_+}+\F_{e_-}\leq\ell}(L(\tilde p,\tilde p-1),\lambda_{\tilde\psi},J)$. In order to do so, we need the following theorem.

\begin{thm}\label{thm:sumkffunctoriality} Let $Y$ be a closed oriented three-manifold with $b_1(Y)=0$, a contact structure $\xi$, a pair of knots $B_\pm$ transverse to $\xi$, and contact forms $\lambda$ and $\lambda'$ for $\xi$ for both of which $B_\pm$ are elliptic Reeb orbits. Let $\rot(B_\pm)\in\R-\Q$ and let $\ell\in\R$. Assume that for the same integer $p$, the orbits $B_\pm^p$ are nullhomologous and have rotation numbers $p\rot(B_\pm)$ for both $\lambda$ and $\lambda'$ in the trivializations which have linking number zero with $B_\pm^p$ with respect to $\Sigma_{B_\pm^p}$. Then
\[
ECH_*^{\F_{B_+}+\F_{B_-}\leq\ell}(Y,\lambda,J)=ECH_*^{\F_{B_+}+\F_{B_-}\leq\ell}(Y,\lambda',J')
\]
for any $\lambda$-compatible $J$ and $\lambda'$-compatible $J'$.
\end{thm}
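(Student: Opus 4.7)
The plan is to mimic the standard proof of contact-form invariance for action-filtered ECH, inserting at each stage the verification that the $\F_{B_+}+\F_{B_-}$ filtration is preserved. This is natural because the filtration depends on the contact form only through the data the hypothesis keeps fixed: the identification of $B_\pm^p$ as elliptic nullhomologous Reeb orbits of rotation number $p\,\rot(B_\pm)$ in the Seifert trivializations $\tau_{\Sigma_{B_\pm^p}}$.

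First I would reduce to the case where $\lambda$ and $\lambda'$ agree in a neighborhood of $B_+\cup B_-$. A parameterized standard-neighborhood argument for elliptic Reeb orbits with prescribed rotation number produces a diffeomorphism of $Y$, isotopic to the identity and preserving $B_\pm$, pulling $\lambda'$ back to a contact form agreeing with $\lambda$ near $B_+\cup B_-$, without changing either filtered homology. After the reduction, the linear interpolation $\lambda_s=(1-s)\lambda+s\lambda'$ is a smooth path of contact forms for $\xi$ along which $B_\pm$ remain elliptic Reeb orbits of rotation number $\rot(B_\pm)$. I would then form the exact symplectic cobordism $(X,\omega_X)=([0,1]_s\times Y,d(\kappa(s)\lambda_s))$ with $\kappa$ chosen so that $\omega_X$ is symplectic and cylindrical near the ends, and choose an $\omega_X$-compatible almost complex structure $J_X$ restricting to $J$ and $J'$ at the ends and making each of $[0,1]\times B_+$ and $[0,1]\times B_-$ holomorphic. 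By the invariance results of \cite{arnoldchord} (and \cite{indepform} at the level of $ECH_*(Y,\xi)$), this data induces a chain map $\Phi\colon ECC_*(Y,\lambda,J)\to ECC_*(Y,\lambda',J')$ whose induced map on $ECH_*(Y,\xi)$ is the canonical identification.

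The heart of the proof is to verify that $\Phi$ is filtered. For any broken $J_X$-holomorphic current $\mathcal C$ contributing to the matrix coefficient of $\Phi$ between generators $B_+^{a_+}B_-^{b_+}\alpha_+$ and $B_+^{a_-}B_-^{b_-}\alpha_-$, I would show
\[
\F_{B_\varepsilon}(B_+^{a_+}B_-^{b_+}\alpha_+)\ \ge\ \F_{B_\varepsilon}(B_+^{a_-}B_-^{b_-}\alpha_-),\qquad \varepsilon\in\{+,-\}.
\]
Fixing $\varepsilon$ and discarding any covers of $[0,1]\times B_\varepsilon$ occurring in $\mathcal C$ (absorbing their multiplicity into the endpoint orbit sets), the remaining somewhere-injective piece of $\mathcal C$ has nonnegative intersection with $[0,1]\times B_\varepsilon$ by intersection positivity. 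Combined with the asymptotic analysis of \cite[Corollary 2.5]{cylasymptotics} and \cite[Corollary 2.6]{cylasymptotics} and the writhe bound \cite[Lemma 5.3(b)]{echlec}, applied at both ends exactly as in the proof of Lemma~\ref{lem:pffiltration}, this yields the displayed inequality. Summing over $\varepsilon\in\{+,-\}$ gives the required monotonicity of $\F_{B_+}+\F_{B_-}$ along $\Phi$. A symmetric cobordism in the reverse direction provides an inverse chain map, also filtered by the same argument, and a standard Floer-continuation chain homotopy produces the isomorphism of filtered homologies.

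The main obstacle I expect is the construction of $J_X$ rendering both cylindrical surfaces $[0,1]\times B_+$ and $[0,1]\times B_-$ simultaneously $J_X$-holomorphic while remaining generic enough to define the cobordism map and ensure transversality for moduli spaces broken at covers of $B_\pm$; the latter may require a perturbation scheme tailored to respect the two prescribed holomorphic surfaces. A secondary issue is that ECH cobordism maps are defined via Seiberg--Witten theory rather than by direct counts of $J_X$-holomorphic currents, so one must invoke the holomorphic-curves model of the cobordism map (as used in the analogous proof for action-filtered ECH in \cite{arnoldchord}) to conclude that each nonzero matrix coefficient of $\Phi$ is witnessed by an honest broken $J_X$-holomorphic current, thereby legitimizing the intersection-positivity argument above.
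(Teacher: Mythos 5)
Your proposal matches the paper's approach: the paper also proves this by following Hutchings' proof of \cite[Theorem 5.3]{mean} in \cite[\S6, \S7]{mean}, using exact symplectic cobordism chain maps whose nonzero coefficients are witnessed by holomorphic currents, applying intersection positivity against $\R\times B_+\cup\R\times B_-$ to show the maps preserve $\F_{B_+}+\F_{B_-}$, and using a filtration-preserving chain homotopy to conclude the maps are isomorphisms. The technical caveats you flag (Seiberg--Witten vs.\ holomorphic model, making the cylinders over $B_\pm$ simultaneously holomorphic) are precisely the ones handled in \cite[\S6, \S7]{mean}, which the paper invokes.
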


In light of this theorem we are justified in using the notation
\[
ECH_*^{\F_{B_+}+\F_{B_-}\leq\ell}(Y,\xi,B_+,B_-,\rot(B_+),\rot(B_-))=ECH_*^{\F_{B_+}+\F_{B_-}\leq\ell}(Y,\lambda,J)
\]
where $\xi$ is any contact structure contactomorphic to $\ker\lambda$.

By very similar methods, we can also prove the following theorem, though we will not need it to prove Theorem \ref{thm:main}.

\begin{thm}\label{thm:kffunctoriality} Let $Y$ be a closed oriented three-manifold with $b_1(Y)=0$, contact structure $\xi$, a knot $B$ transverse to $\xi$, and contact forms $\lambda$ and $\lambda'$ for $\xi$, for which $B$ is an elliptic Reeb orbit. Let $\rot(B)\in\R-\Q$ and let $\ell\in\R$. Assume that for some integer $p$, the orbit $B^p$ is nullhomologous and has rotation number $p\rot(B)$ for both $\lambda$ and $\lambda'$, in the trivialization which has linking number zero with $B^p$ with respect to $\Sigma_{B^p}$. Then
\[
ECH^{\F_B\leq\ell}_*(Y,\lambda,J)=ECH^{\F_B\leq\ell}(Y,\lambda',J')
\]
for any $\lambda$-compatible $J$ and $\lambda'$-compatible $J'$.
\end{thm}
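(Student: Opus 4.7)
The plan is to prove invariance by constructing chain-map isomorphisms between the filtered complexes via an exact symplectic cobordism. This is essentially the strategy indicated for Theorem \ref{thm:sumkffunctoriality} specialized to a single binding component, paralleling the approach of \cite[\S 5]{mean}.

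First I would choose a smooth family $\{\lambda_t\}_{t\in[0,1]}$ of contact forms for $\xi$ with $\lambda_0=\lambda$ and $\lambda_1=\lambda'$. After a small perturbation supported away from $B$, each $\lambda_t$ can be arranged to be nondegenerate for generic $t$, with $B$ remaining an elliptic Reeb orbit of $\lambda_t$ whose $p$-fold cover $B^p$ is nullhomologous of rotation number $p\rot(B)$ in $\tau_{\Sigma_{B^p}}$ throughout the family. The hypothesis that $p\rot(B)$ agrees for the two endpoint forms is exactly what permits such a homotopy: the germ of $\lambda_t$ along $B$ can be chosen constant in $t$. The family defines a completed exact symplectic cobordism $(\hat X,\omega)$ from $(Y,\lambda)$ to $(Y,\lambda')$ in which $\R\times B$ is a pseudoholomorphic cylinder for any admissible almost-complex structure.

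Next I would invoke the ECH cobordism map $\Phi_{\lambda,\lambda'}\colon ECC_*(Y,\lambda,J)\to ECC_*(Y,\lambda',J')$ (which induces the canonical isomorphism of \cite{indepform} on homology) together with its counterpart $\Phi_{\lambda',\lambda}$. The key step is to show that both maps respect $\F_B$. Once the cobordism maps are realized as counts of broken $\hat J$-holomorphic currents (via the holomorphic curves axiom for ECH cobordism maps), the argument of Lemma \ref{lem:pffiltration} transfers nearly verbatim: for an irreducible somewhere-injective curve $C$ in $\hat X$ other than $\R\times B$, intersection positivity with $\R\times B$ still gives inequality (\ref{eqn:intpos}), and the asymptotic analysis of \cite[Corollaries 2.5--2.6]{cylasymptotics} together with \cite[Lemma 5.3(b)]{echlec} still bounds the linking numbers of the ends at $B^{m_\pm}$ by $m_\pm\rot(B)$. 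Composing the two maps, and arguing that the chain homotopy to the identity can itself be taken filtration-preserving (via parameterized moduli spaces), then yields the desired filtered isomorphism.

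The main obstacle is the passage from the Seiberg--Witten-theoretic construction of the ECH cobordism maps to a holomorphic-curve description to which intersection positivity with $\R\times B$ can be applied; this is supplied by the holomorphic curves axiom of Hutchings--Taubes. A secondary delicacy is the local interpolation of $\lambda_t$ near $B$, where the hypothesis on matching $p\rot(B)$ (and $B^p$ being nullhomologous for both forms) is precisely the condition ensuring that there is no topological obstruction to keeping the local model constant along the path.
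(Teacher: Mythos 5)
Your proposal is correct and follows essentially the same route as the paper, which likewise reduces to the cobordism-map argument of Hutchings (\cite[\S 6--7]{mean}), uses intersection positivity with $\R\times B$ together with \cite[Corollaries 2.5--2.6]{cylasymptotics} and \cite[Lemma 5.3(b)]{echlec} to show the maps do not increase $\F_B$, and concludes via a filtration-preserving chain homotopy to the identity. You spell out a bit more of the scaffolding (the interpolating family of contact forms, the constant local model near $B$, the holomorphic curves axiom), but the substance matches the paper's proof.
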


\begin{proof}

Theorems \ref{thm:sumkffunctoriality} and \ref{thm:kffunctoriality} are proved in the same manner as is Theorem \cite[Theorem 5.3]{mean} in \cite[\S6, \S7]{mean}. The idea of the proof is the following: in \cite[Proposition 6.2]{mean}, Hutchings proves the existence of chain maps between the ECH chain complexes of the contact manifolds at either end of an exact symplectic cobordism. Furthermore, if the image of some chain $\alpha$ under this chain maps are nonzero, then there must be a holomorphic current in the cobordism between $\alpha$ and its image under the cobordism map.

In the situation described in Theorem \cite[Theorem 5.3]{mean} as well as Theorem \ref{thm:kffunctoriality}, we can build a symplectic cobordism on $\R\times Y$ which admits these chain maps. The chain maps do not increase $\F_B$ because of intersection positivity between the holomorphic currents between sources and targets and the curve $\R\times B$. Therefore, these cobordism maps descend to ECH filtered by $B$. To prove Theorem \ref{thm:sumkffunctoriality}, we simply replace $\R\times B$ with $\R\times B_+\cup\R\times B_-$.

In \cite[Proposition 6.2]{mean} and in \cite[\S7]{mean} Hutchings also shows that if the ends of the cobordism are contactomorphic then the composition of the two resulting cobordism maps is chain homotopic to the identity, and this chain homotopy also preserves the knot filtration. Therefore the cobordism maps must both be isomorphisms which preserve the knot filtration.

\end{proof}

Notice that both Theorems \ref{thm:sumkffunctoriality} and \ref{thm:kffunctoriality} extend to the case when we consider two contactomorphic contact structures on $Y$ and two knots or pairs of knots, so long as the diffeomorphism of $Y$ which realizes the contactomorphism between the contact structures also identifies the knots and preserves their rotation numbers.

\subsection{Computation of $ECH_*(L(\tilde p,\tilde p-1),\xi_{\tilde\psi})$}\label{subsec:computeech}

We need to compute $ECH_*(L(\tilde p,\tilde p-1),\xi_{\tilde\psi})$ in order to apply \cite[Theorem 1.3]{asymp}. Our approach will also illuminate how to compute the ECH of $L(\tilde p,\tilde p-1)$ filtered by $\F_{e_+}+\F_{e_-}$.

\begin{prop}\label{prop:modelform} If $\frac{y_+}{-y_-+F}\in\R-\Q$, there is a nondegenerate contact form $\lambda_{\tilde p}$ on $L(\tilde p,\tilde p-1)$, where $\tilde p=y_+-y_-+F$, which satisfies
\begin{enumerate}
\item $\ker\lambda_{\tilde p}$ and $\ker\lambda_{\tilde\psi}$ are contactomorphic.
\item The orbits $e_\pm$ of $\lambda_{\tilde\psi}$ are both also simple nondegenerate elliptic Reeb orbits for forms $\lambda_{\tilde p}$, and it has no other simple Reeb orbits.
\item The nullhomologous covers $e_\pm^{\tilde p}$ of $e_\pm$ have rotation numbers $\frac{\tilde p}{y_+}-1$ and $\frac{\tilde p}{-y_-+F}-1$ when computed in the trivializations of $\ker\lambda_{\tilde p}$ which have linking number zero with $e_\pm^{\tilde p}$ with respect to their Seifert surfaces $S_\pm$.
\end{enumerate}
\end{prop}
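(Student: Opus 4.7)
The plan is to construct $\lambda_{\tilde p}$ as a standard ``irrational-ellipsoid'' contact form on $S^3$ descended via $G_{\tilde p,\tilde p-1}$ to $L(\tilde p,\tilde p-1)$, following the model of Lemma~\ref{lem:stdlens}. The key algebraic observation motivating the choice of ellipsoid parameters is that, since $\tilde p=y_+-y_-+F$, the target rotation numbers satisfy
\[
\left(\frac{\tilde p}{y_+}-1\right)\left(\frac{\tilde p}{-y_-+F}-1\right) \;=\; \frac{-y_-+F}{y_+}\cdot\frac{y_+}{-y_-+F} \;=\; 1,
\]
so a single ellipsoid ratio $b/a$ realizes both simultaneously.

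On $S^3\subset\C^2$, take
\[
\widetilde\lambda_{a,b} \;=\; \frac{r_1^2}{2a}\,d\theta_1 + \frac{r_2^2}{2b}\,d\theta_2
\]
with $a=y_+$ and $b=-y_-+F$ (which may be assumed positive; otherwise reverse orientation). The form $\widetilde\lambda_{a,b}$ is $G_{\tilde p,\tilde p-1}$-invariant and contact by the computation of Lemma~\ref{lem:stdlens}, so it descends to $\lambda_{\tilde p}$ on $L(\tilde p,\tilde p-1)$ with Reeb field lifting to $\widetilde R = 2a\,\partial_{\theta_1}+2b\,\partial_{\theta_2}$. Since $a+b>0$ is the component of $\widetilde R$ along $\partial_{\theta_1}+\partial_{\theta_2}$, $\widetilde R$ is transverse to the fibres $\{\theta_1+\theta_2=\mathrm{const}\}$ of $\Pi_{\tilde p}$ and tangent to the binding $\{r_i=0\}$, so $\lambda_{\tilde p}$ is adapted to the open book $(B_{\tilde p},\Pi_{\tilde p})$ with abstract open book $(A,D_{\tilde p})$, the same one to which $\lambda_{\tilde\psi}$ is adapted. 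Theorem~\ref{thm:aobtocontacto} then yields conclusion~(1).

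For conclusion~(2), the Reeb flow of $\widetilde\lambda_{a,b}$ is $\phi_t(z_1,z_2)=(e^{2iat}z_1,e^{2ibt}z_2)$; a closed orbit off $\{z_1z_2=0\}$ would require both $at,bt\in\pi\Z$, forcing $a/b\in\Q$ contrary to hypothesis. Hence the only simple Reeb orbits on $S^3$ are $\gamma^{1,2}:=\{r_{2,1}=0\}$, which are elliptic, and their quotients $e_\pm$ are the only simple Reeb orbits of $\lambda_{\tilde p}$. Nondegeneracy of all iterates will follow from the irrational rotation numbers computed next.

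For conclusion~(3), in the complex-normal trivialization $\tau^1=(\partial_{x_2},\partial_{y_2})$ of $\xi|_{\gamma^1}$ the Reeb flow over the period $T=\pi/a$ rotates the $z_2$-plane by $e^{2\pi ib/a}$, so $\rot_{\tau^1}(\gamma^1)=b/a$. A pushoff of $\gamma^1$ in the direction $\partial_{x_2}$ lies inside the Seifert disk $D=\{z_2\in\R_{\geq 0}\}\cap S^3$ for $\gamma^1$ and, after a small perturbation off $D$, has zero transverse intersection with $D$; hence $\tau^1=\tau_D$. Under $\pi_L$ the loop $\gamma^1$ projects to $e_+^{\tilde p}$, and since the $G_{\tilde p,\tilde p-1}$-translates of $D$ meet only along $\gamma^1$, the image $S_+:=\pi_L(D)$ is a Seifert surface for $e_+^{\tilde p}$ whose preimage in $S^3$ is $\bigcup_\zeta \zeta D$; a direct check (for small perturbations) shows that the pushoff of $\gamma^1$ in $\tau^1$ also has zero intersection with each translate $\zeta D$ for $\zeta\neq 1$, so $\tau^1$ on $\gamma^1$ descends to $\tau_{S_+}$ on $e_+^{\tilde p}$. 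Therefore
\[
\rot_{\tau_{S_+}}(e_+^{\tilde p}) \;=\; b/a \;=\; \frac{-y_-+F}{y_+} \;=\; \frac{\tilde p}{y_+}-1,
\]
and the symmetric computation on $\gamma^2$ with $\tau^2$ and $a/b$ gives $\rot_{\tau_{S_-}}(e_-^{\tilde p}) = \tilde p/(-y_-+F)-1$. The principal obstacle is the topological identification of $S_\pm$ via the quotient and the careful comparison of trivializations; I would cross-check the identification against Proposition~\ref{prop:constructcontact}(2) by computing the integer shift $\tau_{A_0}-\tau_{S_\pm}$ along $e_\pm^{\tilde p}$ directly from a decomposition of $S_\pm$ in terms of the page $A_0$ and a meridional disk for the opposite binding component.
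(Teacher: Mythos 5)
Your proposal is correct and follows essentially the same route as the paper: the paper also constructs $\lambda_{\tilde p}$ by descending an irrational-ellipsoid form (written as $\lambda_{(1,b_+)}$ on $\partial E(1,b_+)$ with $b_+=y_+/(-y_-+F)$, which agrees with your $\widetilde\lambda_{y_+,\,-y_-+F}$ on $S^3$ up to rescaling), uses Theorem~\ref{thm:aobtocontacto} for conclusion~(1), and gets conclusions~(2)--(3) by observing that orbits and rotation numbers descend from the $\tilde p$-fold cover. The only cosmetic difference is that where you re-derive the equality of the complex-normal and Seifert trivializations and their descent by intersecting a pushoff with the translates $\zeta D$, the paper cites the ellipsoid computation from \cite{mean} and then notes that $S_\pm$ is homologous rel boundary to the image of the upstairs Seifert disk (using $H_2(L(\tilde p,\tilde p-1);\Z)=0$), which gives the same conclusion about the trivializations; your final cross-check against the page trivialization is exactly what Lemma~\ref{lem:computerotpm} does.
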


First we check that the rotation numbers in the third conclusion above proposition are the same as those of $\lambda_{\tilde\psi}$, as computed in Proposition \ref{prop:constructcontact}.
\begin{lem}\label{lem:computerotpm} The rotation numbers of $e_\pm^{\tilde p}$ in the trivializations of $\ker\lambda_{\tilde\psi}$ which have linking number zero with $e_\pm^{\tilde p}$ with respect to their Seifert surfaces are $\frac{\tilde p}{y_+}-1$ and $\frac{\tilde p}{-y_-+F}-1$.
\end{lem}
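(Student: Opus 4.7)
The plan is to apply the change-of-trivialization formula. The rotation-number analogue of (\ref{eqn:CZtauprime}), namely $\rot_\tau(\gamma) - \rot_{\tau'}(\gamma) = \tau - \tau'$, reduces the lemma to computing the integer framing differences $\pi_\pm^*\tau_{A_0} - \tau_{S_\pm}$ of trivializations of $(e_\pm^{\tilde p})^*\xi_{\tilde\psi}$, where $\pi_\pm:e_\pm^{\tilde p}\to e_\pm$ is the $\tilde p$-fold covering. Indeed, Proposition \ref{prop:constructcontact}(2) combined with the multiplicativity of rotation numbers under iteration in a pulled-back trivialization gives $\rot_{\pi_+^*\tau_{A_0}}(e_+^{\tilde p})=\tilde p/y_+$ (and analogously for $e_-$), so it suffices to show both framing differences equal $1$.

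I would interpret these differences as linking numbers: by the defining property of $\tau_{S_+}$, we have $\pi_+^*\tau_{A_0} - \tau_{S_+} = S_+\cdot\eta$, where $\eta$ is a $\pi_+^*\tau_{A_0}$-constant pushoff of $e_+^{\tilde p}$. A convenient Seifert surface comes from the universal cover $\mathfrak{q}_{\tilde p}:S^3\to L(\tilde p,\tilde p-1)$ of Lemma \ref{lem:stdlens}: the lift $\tilde e_+\subset S^3$ is a component of the Hopf link bounding a half-Hopf disk $D$, and $S_+:=\mathfrak{q}_{\tilde p}(D)$ is embedded because the deck group $G_{\tilde p,\tilde p-1}$ takes $D$ to $\tilde p$ disjoint translates, with $\partial S_+$ wrapping $e_+\subset L(\tilde p,\tilde p-1)$ exactly $\tilde p$ times, so it is a genuine Seifert surface for $e_+^{\tilde p}$. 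Lifting $\eta$ to $S^3$ by means of the contactomorphism supplied by Theorem \ref{thm:aobtocontacto} applied to the common abstract open book $(A,D_{\tilde p})$, the intersections of this lift with the $\tilde p$ translates $\{g^k D\}_{k=0}^{\tilde p-1}$ form a single $G_{\tilde p,\tilde p-1}$-orbit, descending to a single intersection point in $L(\tilde p,\tilde p-1)$, giving $S_+\cdot\eta=1$.

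The main obstacle is the orientation and coordinate bookkeeping in the intersection count, particularly identifying the $\tau_{A_0}$-pushoff direction in the $(\rho_+,\mu_+,t_+)$ coordinates on $T_+$ from Step 4 of the proof of Proposition \ref{prop:constructcontact} with an explicit curve in the $S^3$ cover under the contactomorphism; a naive Seifert surface built from $\tilde p$ parallel copies of $A_0$ capped off by a Seifert surface for $\tilde p e_-$ in $L(\tilde p,\tilde p-1)$ produces the same topological intersection number but makes the sign and multiplicity tracking less transparent. The $e_-$ case is handled by the analogous argument, exchanging $(T_+,y_+)\leftrightarrow(T_-,-y_-+F)$, with the symmetry of the Seifert surface construction assured by $[e_+]=[e_-]$ in $H_1(L(\tilde p,\tilde p-1))$, a consequence of $\partial A_0=e_+-e_-$.
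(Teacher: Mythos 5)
Your reduction to a framing difference is correct, and computing $\rot_{\pi_+^*\tau_{A_0}}(e_+^{\tilde p})=\tilde p/y_+$ from Proposition~\ref{prop:constructcontact}(2) is fine. However, the route you then take differs from the paper's and, as written, does not close the key step. The paper works entirely inside the solid-torus coordinates $(\rho_\pm,\mu_\pm,t_\pm)$ from Step 4 of Proposition~\ref{prop:constructcontact}: it builds the Seifert surface explicitly as $\Sigma_+\cup_{\nu_-}D_-$ with $\Sigma_+=\{\tilde p\mu_+=t_+\}$, observes that $\Sigma_+$ makes exactly one full turn in the $\mu_+$ direction over one traversal of $e_+^{\tilde p}$ while $A_0=\{\mu_+=0\}$ makes none, and concludes the framing difference is $1$ directly. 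By contrast, you route through the $S^3\to L(\tilde p,\tilde p-1)$ cover and an abstract contactomorphism from Theorem~\ref{thm:aobtocontacto}.

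The gap is in your intersection count. You assert that the lift of the $\tau_{A_0}$-constant pushoff $\eta$ meets the $\tilde p$ translates $\{g^kD\}$ in \emph{a single} $G_{\tilde p,\tilde p-1}$-orbit. But the number of intersection points with each translate equals the meridional winding of the lifted pushoff around $\tilde e_+$, and for the count to descend to $1$ you need this winding to be exactly $\pm1$. That winding \emph{is} the framing difference you set out to compute; asserting the single-orbit property presupposes the answer. This is not merely ``orientation and coordinate bookkeeping'': it is the geometric content of the lemma, and your proof does not supply it. (Relatedly, the contactomorphism supplied by Theorem~\ref{thm:aobtocontacto} is not explicit, so transporting the specific $\tau_{A_0}$-constant pushoff into $S^3$ coordinates requires more justification than the paper needs in its direct argument; and your claim that $\mathfrak{q}_{\tilde p}(D)$ is ``embedded'' should be qualified — its interior embeds, but its boundary covers $e_+$ with multiplicity $\tilde p$.) The fix is essentially to reproduce the paper's comparison of the angular twist of the two surfaces, either in the $T_\pm$ coordinates or in the lifted $(\theta_1,\theta_2)$ coordinates where $A_0^1=\{\theta_1+\theta_2=\mathrm{const}\}$ versus the Hopf disk $\{\theta_2=\mathrm{const}\}$, tracking signs.
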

\begin{proof} Recall the solid tori $T_\pm$ from the proof of Proposition \ref{prop:constructcontact}. Let $\nu_\pm$ denote the meridian $t_\pm=0$ on the boundary of the solid torus $T_\pm$, oriented with $\partial_{\mu_\pm}$. Let $D_\pm=\{t_\pm=0\}$ be the disks bounded by $\nu_\pm$, oriented so that $\nu_\pm=\partial D_\pm$. In $T_+$, let
\[
\Sigma_+:=\{(t_+,\rho_+,\mu_+)\in T_+\;|\;\tilde p\mu_+=t_+\}
\]
Orient $\Sigma_+$ so that $\partial\Sigma_+=e_+^{\tilde p}\sqcup-T_{1,\tilde p}$, where where $T_{1,\tilde p}$ is the $(1,\tilde p)$ torus knot on $\partial T_+$. The meridian $\nu_-$ of the core circle of $T_-$ is glued to $T_{1,\tilde p}$ to form $L(\tilde p,\tilde p-1)$. By gluing $D_-$ along $\nu_-$ to $\Sigma_+$, we obtain a Seifert surface for $e_+^{\tilde p}$. Similarly, in $T_-$, let
\[
\Sigma_-:=\{(t_-,\mu_-,\rho_-)\in T_-\;|\;\tilde p\mu_-=-t_-\}
\]
By similar reasoning, $\Sigma_-\cup_{\nu_+}D_+$ is a Seifert surface for $e_-^{\tilde p}$.

We can now compute the desired rotation numbers of $e_\pm^{\tilde p}$ by using the trivializations which have linking number zero with $e_\pm^{\tilde p}$ with respect to $\Sigma_\pm$. We know from Proposition \ref{prop:constructcontact} that the rotation numbers of $e_\pm^{\tilde p}$ in the trivializations of $\ker\lambda_{\tilde\psi}$ which have linking number zero with $e_\pm^{\tilde p}$ with respect to $\tilde pA_0$ are $\frac{\tilde p}{y_+}$ and $\frac{\tilde p}{-y_-+F}$. In $T_\pm$ coordinates, $A_0$ is the surface $\mu_\pm=0$. In one full flow about $e_\pm^{\tilde p}$, the trivializations which have linking number zero with $e_\pm^{\tilde p}$ with respect to $\tilde pA_0$ twist exactly once less in the $\partial_{\mu_\pm}$ direction than do the trivializations which have linking number zero with $e_\pm^{\tilde p}$ with respect to $\Sigma_\pm$. This is because the corresponding surfaces rotate exactly once less, as shown in Figure \ref{fig:rot+} for $\Sigma_+$. Therefore,
\[
\rot_{\tau_{\Sigma_\pm\cup D_\mp}}\left(e_\pm^{\tilde p}\right)=\rot_{\tau_{\tilde pA_0}}\left(e_\pm^{\tilde p}\right)-1
\]

\begin{figure}
\centering
\includegraphics{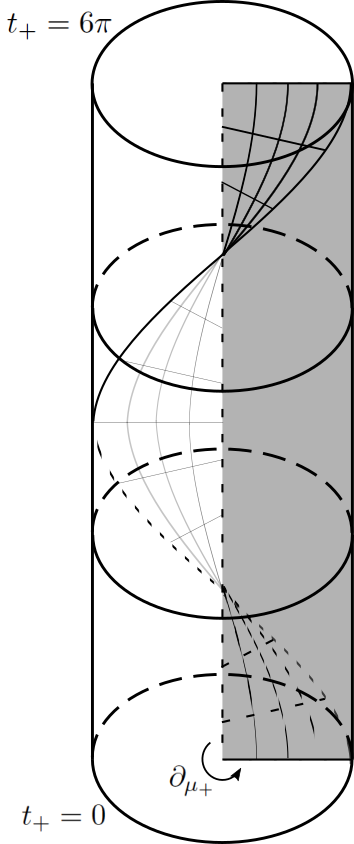}
\caption{A neighborhood of $e_+^3$ in the triple cover of $T_+$. The positive $\mu_+$ direction is counterclockwise in the $t_+=0$ circle. The grey surface is $3A_0$ and the mesh surface is $\Sigma_+$.}
\label{fig:rot+}
\end{figure}
\end{proof}

From now on we will use the following notation (compare the notation for $\rot(B)$ in the construction of the knot filtration in \S\ref{subsec:constructkfech}):
\[
\rot(e_+):=\frac{1}{y_+}-\frac{1}{\tilde p}\text{ and }\rot(e_-):=\frac{1}{-y_-+F}-\frac{1}{\tilde p}
\]

\begin{proof} \emph{(of Proposition \ref{prop:modelform})}
Let $E(a,b)$ denote the ellipsoid
\[
E(a,b):=\left\{(r_1,\theta_1,r_2,\theta_2)\in\C^2\;\middle|\;\frac{\pi}{a}r_1^2+\frac{\pi}{b}r_2^2\leq1\right\}
\]
Let $\mathfrak{q}_{\tilde p}:\partial E(a,b)\to L(\tilde p,\tilde p-1)$ be the quotient map as in \S\ref{subsec:stdlens}. Finally let $\lambda_{(a,b)}$ denote the restriction of the standard 1-form on $\R^4$ given on $\C^2-\{0\}$ by
\[
\frac{1}{2}r_1^2\,d\theta_1+\frac{1}{2}r_2^2\,d\theta_2
\]
to $\partial E(a,b)$. Notice that $\lambda_{(a,b)}$ has simple Reeb orbits
\[
\gamma^{1,2}_{(a,b)}:=\{r_{2,1}=0\}\cap\partial E(a,b)
\]
which, as computed in \cite{mean}, have actions $a$ and $b$, respectively, and rotation numbers $\frac{a}{b}$ and $\frac{b}{a}$, respectively, in their trivializations which have linking number zero with $\gamma^{1,2}$ with respect to their Seifert surfaces.

Let $\lambda_{\tilde p}$ on $L(\tilde p,\tilde p-1)$ be the contact form uniquely determined by
\[
\mathfrak{q}_{\tilde p}^*\lambda_{\tilde p}=\lambda_{(1,b_+)}
\]
as in the proof of Lemma \ref{lem:stdlens}, where
\[
b_+:=\frac{y_+}{-y_-+F}
\]

Because every orbit of $\lambda_{\tilde p}$ is covered $\tilde p\in\Z$ times by an orbit of $\lambda_{(1,b_+)}$, the former is nondegenerate precisely when the latter is, which is when $\frac{y_+}{-y_-+F}$ is irrational.

Next we check the conclusions of the proposition.
\begin{enumerate}
\item Because
\[
R_{(a,b)}=2\pi\left(\frac{1}{a}\partial_{\theta_1}+\frac{1}{b}\partial_{\theta_2}\right)
\]
$\lambda_{(a,b)}$ is adapted to the open book decomposition $\Pi_1$ of $\partial E(a,b)$. Because $\Pi_1$ factors through the open book decomposition $(H_{\tilde p},\Pi_{\tilde p})$ of $L(\tilde p,\tilde p-1)$ as shown in Lemma \ref{lem:stdlens}, $\lambda_{\tilde p}$ is adapted to open book decomposition which induces the abstract open book $(A,D_{\tilde p})$. Therefore $\ker\lambda_{\tilde p}$ and $\ker\lambda_{\tilde\psi}$ are contactomorphic by Theorem \ref{thm:aobtocontacto}.

\item The Reeb orbits of $\lambda_{\tilde p}$ are covered $\tilde p$ times by the Reeb orbits of $\lambda_{(1,b_+)}$. Because $e_\pm^{\tilde p}$ are the only simple Reeb orbits of the latter contact form, $e_\pm$ are the only simple Reeb orbits of the former contact form. Nondegeneracy also descends: the rotation numbers of the forms on the lens space can only differ from the rotation numbers of the forms on the ellipsoid boundary by the addition or subtraction of integers and then division by $\tilde p\in\Z$, operations which preserve irrationality.

\item Both $e_\pm^{\tilde p}=\mathfrak{q}_{\tilde p}(\gamma^{1,2}_{(1,b_+)})$ and $S_\pm$ can be represented by surfaces homologous to the images of the Siefert surfaces of $\gamma^{1,2}_{(1,b_+)}$ under $\mathfrak{q}_{\tilde p}$. Therefore, the rotation numbers of $e_\pm^{\tilde p}$ in the trivializations of $\ker\lambda_{\tilde p}$ with linking number zero with respect to $S_\pm$ are the same as those of $\gamma^{1,2}_{(1,b_+)}$ in the trivializations of $\ker\lambda_{(1,b_+)}$ with linking number zero with respect to their Seifert surfaces. These are
\begin{align*}
\rot_{S_+}(e_+^{\tilde p})&=\frac{1}{b_+}=\frac{-y_-+F}{y_+}=\frac{\tilde p-y_+}{y_+}=\frac{\tilde p}{y_+}-1
\\\rot_{S_-}(e_-^{\tilde p})&=\frac{b_+}{1}=\frac{y_+}{-y_-+F}=\frac{\tilde p-(-y_-+F)}{-y_-+F}=\frac{\tilde p}{-y_-+F}-1
\end{align*}

\end{enumerate}
\end{proof}

We will denote $\ker\lambda_{\tilde p}$ by $\xi_{\tilde p}$; it is contactomorphic to $\ker\lambda_{\tilde\psi}$ by Theorem \ref{thm:contactomorphic}. Therefore, we can compute the ECH of $(L(\tilde p,\tilde p-1),\lambda_{\tilde\psi})$ using $\lambda_{\tilde p}$ instead.

The generators of the chain complex for $\lambda_{\tilde p}$ are orbit sets of the form $e_+^{m_+}e_-^{m_-}$, with $m_+-d\tilde p=m_-$ for some $d\in\Z$. Because $H_2(L(\tilde p,\tilde p-1);\Z)=0$, there is a unique element $Z_{(m_+,d)}$ of $H_2(L(\tilde p,\tilde p-1),e_+^{m_+}e_-^{m_+-d\tilde p},\emptyset)$. Therefore there is a natural $\Z$ grading on $ECC_*(L(\tilde p,\tilde p-1),\lambda_{\tilde p},J)$ given by
\[
I\left(e_+^{m_+}e_-^{m_+-d\tilde p}\right):=I\left(e_+^{m_+}e_-^{m_+-d\tilde p},\emptyset,Z_{(m_+,d)}\right)
\]

\begin{prop}\label{prop:grading} Let $e_+^{m_+}e_-^{m_-}$, with $m_+-d\tilde p=m_-$ for some $d\in\Z$, be a generator of $ECC(L(\tilde p,\tilde p-1),\lambda_{\tilde p},J)$. We have
\[
I\left(e_+^{m_+}e_-^{m_+-d\tilde p}\right)=-\tilde pd^2+\sum_{i=1}^{m_+}\left(2\left\lfloor\frac{i}{y_+}\right\rfloor+1\right)+\sum_{j=1}^{m_+-d\tilde p}\left(2\left\lfloor\frac{j}{-y_-+F}\right\rfloor+1\right)
\]
\end{prop}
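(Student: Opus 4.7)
My plan is to apply the ECH index formula
\[
I(\alpha) = c_\tau(Z_\alpha) + Q_\tau(Z_\alpha) + \sum_i\sum_{k=1}^{m_i}CZ_\tau(\alpha_i^k)
\]
with the trivialization $\tau := \tau_{A_0}$ of $\xi_{\tilde\psi}|_{e_\pm}$ having linking number zero with respect to the page $A_0$. By Proposition \ref{prop:constructcontact}, $\rot_\tau(e_+) = 1/y_+$ and $\rot_\tau(e_-) = 1/(-y_-+F)$, so the elliptic formula $CZ_\tau(\gamma^k) = 2\lfloor k\rot_\tau(\gamma)\rfloor + 1$ immediately produces
\[
\sum_{k=1}^{m_+}CZ_\tau(e_+^k) + \sum_{k=1}^{m_+-d\tilde p}CZ_\tau(e_-^k) = \sum_{i=1}^{m_+}\left(2\left\lfloor i/y_+\right\rfloor+1\right) + \sum_{j=1}^{m_+-d\tilde p}\left(2\left\lfloor j/(-y_-+F)\right\rfloor+1\right),
\]
which matches the last two sums in the claim. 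The remaining task is to show $c_\tau(Z_{(m_+,d)}) + Q_\tau(Z_{(m_+,d)}) = -\tilde p\,d^2$.

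For a concrete representative I would take $Z_{(m_+,d)} := m_+[A_0] - d[S_-]$, where $S_-$ is the Seifert surface of $e_-^{\tilde p}$ from Lemma \ref{lem:computerotpm}. Using $\partial A_0 = e_+ + e_-$ (with orientations compatible with $R$-transversality) and $\partial S_- = \tilde p\,e_-$, this has the correct boundary $m_+ e_+ + (m_+ - d\tilde p) e_-$; uniqueness of the homology class is automatic since $H_2(L(\tilde p,\tilde p-1);\Z) = 0$. By linearity of $c_\tau$ and bilinearity of $Q_\tau$,
\[
c_\tau(Z) + Q_\tau(Z) = m_+ c_\tau(A_0) + m_+^2 Q_\tau(A_0) - d\,c_\tau(S_-) + d^2 Q_\tau(S_-) - 2m_+ d\,Q_\tau(A_0, S_-).
\]

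The key claims are then: (a) $c_\tau(A_0) = Q_\tau(A_0) = 0$, because $R$ is transverse to the interior of $A_0$ and $\tau_{A_0}$ is adapted to the page, so nearby parallel pages provide disjoint representatives in $[0,1]\times L$ and the constant $\tau$-section over $\partial A_0$ extends to a nowhere-zero section along $A_0$; (b) $Q_\tau(A_0, S_-) = 0$, because we can arrange $A_0$ and $S_-$ to meet only along the common boundary $e_-$, where the $\tau_{A_0}$-framing enables a boundary perturbation keeping the interiors disjoint; (c) $c_\tau(S_-) + Q_\tau(S_-) = -\tilde p$, which I would obtain by comparing with the Seifert trivialization $\tau_{S_-}$ over the cover $e_-^{\tilde p}$ (for which $c_{\tau_{S_-}}(S_-) = 1$ and $Q_{\tau_{S_-}}(S_-) = 0$ by the standard formulas for a disk Seifert surface in a rational homology sphere) and applying the change-of-trivialization formulas (\ref{eqn:ctauprime}) and (\ref{eqn:Qtauprime}), where the framing difference between $\tau_{S_-}$ and the lift of $\tau_{A_0}$ to the $\tilde p$-fold cover is $-1$ by Lemma \ref{lem:computerotpm}. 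Combining (a)--(c) yields $c_\tau(Z) + Q_\tau(Z) = -\tilde p\,d^2$, as required.

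The main obstacle is (c): since $\tau_{S_-}$ is naturally defined over the $\tilde p$-fold cover $e_-^{\tilde p}$ rather than over the simple orbit $e_-$ (whose multiplicity $\tilde p$ appears in $\partial S_-$), a direct application of the change-of-trivialization formulas requires careful accounting of the multiplicity, and the ``standard disk'' values for $c_{\tau_{S_-}}(S_-)$ and $Q_{\tau_{S_-}}(S_-)$ must be verified for the specific non-standard surface $S_- = \Sigma_- \cup D_+$. A cleaner alternative is to compute $I(e_-^{\tilde p})$ (the case $m_+ = 0$, $d = -1$) in two ways---once using the claimed formula, once using the cover-multiplicity-one perspective with $\tau_{S_-}$---and extract $c_\tau(S_-) + Q_\tau(S_-)$ from the resulting equality without factoring through the individual trivialization-dependent values of $c_\tau(S_-)$ and $Q_\tau(S_-)$.
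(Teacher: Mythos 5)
Your overall structure matches the paper's: compute the $CZ$ contributions from the rotation numbers $\rot_{\tau_{A_0}}(e_\pm)=1/y_+$ and $1/(-y_-+F)$, then reduce the claim to $c_{\tau_{A_0}}(Z_{(m_+,d)})+Q_{\tau_{A_0}}(Z_{(m_+,d)})=-\tilde p\,d^2$. Your uniform representative $m_+[A_0]-d[S_-]$ is a tidy alternative to the paper's case split on the sign of $d$ (the paper uses $dS_+\cup(m_+-d\tilde p)A_0$ for $d>0$, $m_+A_0$ for $d=0$, $m_+A_0\cup dS_-$ for $d<0$). The $CZ$ term and claim (a) ($c_{\tau_{A_0}}(A_0)=Q_{\tau_{A_0}}(A_0)=0$) match the paper exactly.

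The serious gap is claim (b). You assert $Q_\tau(A_0,S_-)=0$ because $A_0$ and $S_-$ ``meet only along the common boundary $e_-$.'' This is false: by construction $S_-=\Sigma_-\cup_{\nu_+}D_+$ where $D_+=\{t_+=0\}$ is a meridian disk in $T_+$, and $D_+$ passes through the center point $e_+\cap\{t_+=0\}$, while $e_+$ is a boundary component of $A_0$. In fact $A_0\cap T_+=\{\mu_+=0\}$ and $D_+=\{t_+=0\}$ meet along an entire radius of $D_+$, not merely at $e_-$. So the pushoff argument you sketch does not deliver disjointness, and the cross term in your bilinear expansion is not obviously zero. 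This is precisely the pitfall the paper's proof is engineered to avoid: it computes $c_{\tau_{A_0}}$ and $Q_{\tau_{A_0}}$ on the two pieces separately and combines them via the additivity of $c_\tau$ and $Q_\tau$ under concatenation of relative classes ($Z\in H_2(Y,\alpha,\beta)$, $W\in H_2(Y,\beta,\gamma)$), stacking the pieces at different $s$-levels in $[0,1]\times Y$ so that no cross-intersection ever needs to be counted. The paper explicitly remarks that ``with this method we never have to worry about intersections between lifts of $S_\pm$ and $A_0$.'' To make your bilinear approach rigorous you would either need to re-derive the vanishing of the cross term via that concatenation (at which point you have essentially reproduced the paper's argument) or give a genuine geometric disjointness argument that accounts for the intersection of $D_+$ with $A_0$ near $e_+$.

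On (c): you correctly flag the multiplicity-accounting subtlety in passing from $\tau_{S_-}$ (defined over the cover $e_-^{\tilde p}$) to $\tau_{A_0}$. The paper resolves exactly this difficulty by lifting the entire computation to $S^3=\partial E(1,b_+)$, where the disk Seifert surfaces $S_\pm^1$ have $c_{\tau_{S_\pm^1}}(S_\pm^1)=1$ and $Q_{\tau_{S_\pm^1}}(dS_\pm^1)=0$ with multiplicity one, and the trivialization shift is a clean $-1$ per revolution; it then descends by equivariance, picking up the factor of $\tilde p$ from the $\tilde p$ disjoint preimage disks. Your proposed fallback of computing $I(e_-^{\tilde p})$ two ways to extract $c_\tau(S_-)+Q_\tau(S_-)$ is a reasonable workaround for (c), but it does not repair (b), which remains the essential missing step.
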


\begin{proof}
Recall that we refer to the trivialization which has linking number zero with $e_\pm$ with respect to $A_0$, where $A_0$ denotes the image of the zero page of $\Pi_{\tilde p}$, as $\tau_{A_0}$.

The contribution to the Conley-Zehnder index of $e_+^{m_+}e_-^{m_+-d\tilde p}$ is determined by the rotation numbers with respect to $\tau_{A_0}$. We compute these rotation numbers as follows:
\[
\rot_{\tau_{A_0}}(e_\pm)=\frac{1}{\tilde p}\rot_{\tau_{\tilde pA_0}}\left(e_\pm^{\tilde p}\right)=\frac{1}{\tilde p}\left(\rot_{\tau_{A_0^1}}\left(\gamma^{1,2}_{(1,b_+)}\right)\right)=\frac{1}{\tilde p}\left(\rot_{\tau_{S_\pm^1}}\left(\gamma^{1,2}_{(1,b_+)}\right)+1\right)
\]
From the computations in the proof of Proposition \ref{prop:modelform}, we get
\begin{align*}
\rot_{\tau_{A_0}}(e_+)&=\frac{1}{\tilde p}\left(\left(\frac{\tilde p}{y_+}-1\right)+1\right)=\frac{1}{y_+}
\\\rot_{\tau_{A_0}}(e_-)&=\frac{1}{\tilde p}\left(\left(\frac{\tilde p}{-y_-+F}-1\right)+1\right)=\frac{1}{-y_-+F}
\end{align*}
therefore the Conley-Zehnder contribution of $e_+^{m_+}e_-^{m_+-d\tilde p}$ to its index is
\[
\sum_{i=1}^{m_+}CZ_{\tau_{A_0}}(e_+^i)+\sum_{j=1}^{m_+-d\tilde p}CZ_{\tau_{A_0}}(e_-^j)=\sum_{i=1}^{m_+}\left(2\left\lfloor\frac{i}{y_+}\right\rfloor+1\right)+\sum_{j=1}^{m_+-d\tilde p}\left(2\left\lfloor\frac{j}{-y_-+F}\right\rfloor+1\right)
\]

Next we investigate the surfaces representing $Z_{(m_+,d)}$ to compute $c_{\tau_{A_0}}$ and $Q_{\tau_{A_0}}$. Pairs $e_+e_-$ bound the page $A_0$. In the proof of Lemma \ref{lem:computerotpm}, we found that $e_+^{\tilde p}$ is the boundary of $S_+=\Sigma_+\cup_{\nu_-}D_-$, and $e_-^{\tilde p}$ is the boundary of $S_-=\Sigma_-\cup_{\nu_+}D_+$. Therefore
\begin{itemize}
\item If $d>0$, then $e_+^{m_+}e_-^{m_+-d\tilde p}$ is the boundary of $dS_+\cup(m_+-d\tilde p)A_0$.
\item If $d=0$, then $e_+^{m_+}e_-^{m_+}$ is the boundary of $m_+A_0$.
\item If $d<0$, then $e_+^{m_+}e_-^{m_+-d\tilde p}$ is the boundary of $m_+A_0\cup dS_-$.
\end{itemize}

Therefore in order to compute the ECH index of a generator we need only to compute $c_{\tau_{A_0}}$ and $Q_{\tau_{A_0}}$ of the $H_2(L(\tilde p,\tilde p-1),e_\pm^{\tilde p},\emptyset)$ class of $dS_\pm$ and the $H_2(L(\tilde p,\tilde p-1),e_+e_-,\emptyset)$ class of $m_+A_0$ and add as necessary. To add, we use the following fact: if $Z\in H_2(Y,\alpha,\beta)$ and $W\in H_2(Y,\beta,\gamma)$ then both $c_\tau(Z+W)=c_\tau(Z)+c_\tau(W)$ and $Q_\tau(Z+W)=Q_\tau(Z)+Q_\tau(W)$, which are shown in \cite[\S3.3]{indexineq}. With this method we never have to worry about intersections between lifts of $S_\pm$ and $A_0$ to $[-1,1]\times L(\tilde p,\tilde p-1)$ when computing $Q_{\tau_{A_0}}$. 

The computations are of two types: those which can be computed directly in $L(\tilde p,\tilde p-1)$ and those which use the lift to $\partial E(1,b_+)$. First we directly compute $c_{\tau_{A_0}}(A_0)$ and $Q_{\tau_{A_0}}(A_0)$.

For the computation of $c_{\tau_{A_0}}(A_0)$, we choose the section $\partial_{\rho_+}$ of $\xi_{\tilde p}$ over $L(\tilde p,\tilde p-1)-\{e_\pm\}$. It is constant with respect to $\tau_{A_0}$, so in particular it is never zero on $A_0$. Therefore
\[
c_{\tau_{A_0}}(A_0)=0
\]

For the computation of $Q_{\tau_{A_0}}(A_0)$, we can represent the $H_2(L(\tilde p,\tilde p-1),e_+e_-,\emptyset)$ class of $A_0$ by a surface which in the $(1-\epsilon,1]$ range is an embedding whose image in a slice transverse to $e_\pm$ is a union of rays which do not intersect and do not rotate with respect to $\tau_{A_0}$ as follows. Take the map $\{1\}\times id$ to $\{1\}\times L(\tilde p,\tilde p-1)$ and ``push" the middle of the annulus into $(-1,1)\times L(\tilde p,\tilde p-1)$ by a map which is quadratic with respect to the radial direction on $A_0$. For example, if $A_0$ is parameterized by $[-1,1]_x\times(\R/2\pi\Z)_y$ then the embedding is $\{x^2\}\times id$. This surface is constant with respect to $\tau_{A_0}$ by the definition of $\tau_{A_0}$. Therefore we can use it to compute relative self-intersection number.

This surface does not intersect itself because we can simply push the middle of the annulus deeper into the $(-1,1)$ direction, e.g. by using $\frac{3}{2}x^2-\frac{1}{2}$ instead of $x^2$. Therefore
\[
Q_{\tau_{A_0}}(A_0)=0
\]

To compute $c_{\tau_{A_0}}(S_\pm)$ and $Q_{\tau_{A_0}}(S_\pm)$ we will pass through the computations for their lifts in $\partial E(1,b_+)$. Denote by $A_0^1$ the zero page of $\Pi_1$ and $S_\pm^1$ the Seifert surfaces $\theta_{1,2}=0$ for $\gamma^{1,2}_{(1,b_+)}$ in $\partial E(1,b_+)$.

Let $\tilde pS_\pm^1$ denote the equivalence class of $S_\pm^1$ under the action of $G_{\tilde p,\tilde p-1}$, the group introduced in Lemma \ref{lem:stdlens} to define $L(\tilde p,\tilde p-1)$. Instead of computing the values which $c_{\tau_{A_0}}$ and $Q_{\tau_{A_0}}$ take on $S_\pm$ directly, we will use instead the homologous surfaces $\mathfrak{q}_{(1,b_+)}(\tilde pS_\pm^1)$. As in \cite[\S2.2]{indexineq}, the relative first Chern class changes by the first Chern class when the homology class changes:
\[
c_\tau(Z)-c_\tau(Z')=c_1(Z-Z')
\]
where $c_1$ is the first Chern class of $\xi$. In our case, $H_2(L(\tilde p,\tilde p-1))=0$, so this difference is zero. The relative self-intersection number will not change, either. Following \cite[Lemma 2.5]{indexineq}, the difference between $Q_\tau(Z)$ and $Q_\tau(Z')$ is twice the intersection number of $Z-Z'$ with the $H_1(L(\tilde p,\tilde p-1))$ class in which the generators of the ECH chain complex live. Because our generators are homologous to zero, this difference is zero.

We have
\begin{align*}
c_{\tau_{A_0}}(S_\pm)&=c_{\tau_{A_0}}(\mathfrak{q}_{(1,b_+)}(\tilde pS_\pm^1))
\\&=c_{\tau_{A_0^1}}(\tilde p S_\pm^1)
\\&=\tilde p c_{\tau_{A_0^1}}(S_\pm^1)
\\&=\tilde p\left(c_{\tau_{S_\pm^1}}(S_\pm^1)-1\right)\text{ by (\ref{eqn:ctauprime})}
\end{align*}
As in \cite[\S3.7]{echlec}, we have $c_{\tau_{S_\pm^1}}(S_\pm^1)=1$, therefore
\[
c_{\tau_{A_0}}(dS_\pm)=dc_{\tau_{A_0}}(S_\pm)=0
\]

For the relative self-intersection number, no disk in $\tilde pS_\pm$ intersects any other, and they all have equal self-intersection numbers in $[-1,1]\times E(1,b_+)$. Therefore
\begin{align*}
Q_{\tau_{A_0}}(dS_\pm)&=Q_{\tau_{A_0}}(\mathfrak{q}_{(1,b_+)}(\tilde p dS_\pm^1))
\\&=\tilde pQ_{\tau_{A_0^1}}(d S_\pm^1) \text{ by the sentence above}
\\&=\tilde p\left(Q_{\tau_{S_\pm^1}}(dS_\pm^1)+d^2(-1)\right)
\end{align*}
The final equality comes from \ref{eqn:Qtauprime}, because the multiplicity of the positive end is $d$ and the trivializations over $\gamma^{1,2}_{(1,b_+)}$ which have linking number zero with $\gamma^{1,2}_{(1,b_+)}$ with respect to $A_0^1$ and with respect to $S_\pm^1$ differ by one. As in \cite[\S3.7]{echlec}, $Q_{\tau_{S_\pm^1}}(dS_\pm^1)=0$ because the disks $S_\pm^1$ can be perturbed to not intersect themselves even in $E(1,b_+)$ when the perturbation is constant with respect to $\tau_{S_\pm^1}$. Therefore
\[
Q_{\tau_{A_0}}(dS_\pm)=-\tilde pd^2
\]
\end{proof}

This index is relatively easy to compute for a given generator, but we still do not understand the global structure of the chain complex. It turns out to be entirely combinatorial. The study of the ECH of contact forms whose Reeb vector field is parallel to a family of tori was initiated by Hutchings and Sullivan in \cite{T3}, and an analysis similar to theirs applies to many contact forms on lens spaces. We introduce one such description of the Reeb dynamics of $\lambda_{\tilde p}$ in the proof of the next proposition.

\begin{prop}\label{prop:indexevenintegers} The indices of the generators of $ECC_*(L(\tilde p,\tilde p-1),\lambda_{\tilde p},J)$ are in bijection with the nonnegative even integers.
\end{prop}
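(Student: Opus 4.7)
The plan is to establish two properties: every index is a nonnegative even integer, and each nonnegative even integer arises as the index of exactly one generator.

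Parity follows from Proposition \ref{prop:grading} by a direct calculation. Each summand $2\lfloor i/y_+\rfloor + 1$ and $2\lfloor j/(-y_-+F)\rfloor + 1$ is odd, so the two floor sums together contribute $m_+ + (m_+ - d\tilde p)$ modulo $2$; adding the correction $-\tilde p d^2$ yields total parity $\tilde p(d^2 + d) = \tilde p d(d+1) \equiv 0 \pmod{2}$, since $d(d+1)$ is always even.

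For the bijection, I would order the generators by the linear functional $L(m_+, m_-) := (-y_- + F)m_+ + y_+ m_-$, which is a positive multiple of the symplectic action of $e_+^{m_+}e_-^{m_-}$ under $\lambda_{\tilde p}$. The assumption $y_+/(-y_- + F) \in \R - \Q$ from Proposition \ref{prop:modelform} makes $L$ injective on the set $\{(m_+, m_-) \in \Z_{\geq 0}^2 : m_+ \equiv m_- \pmod{\tilde p}\}$ of generators, totally ordering it; write $N(m_+, m_-)$ for the rank, starting at $0$ for $(0,0)$. Then $N$ is a bijection from the generators to $\Z_{\geq 0}$, and the proposition reduces to the identity
\[
I(e_+^{m_+}e_-^{m_-}) = 2N(m_+, m_-).
\]

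Proving that identity is the main obstacle. One plan is induction on $N$: if $(m_+', m_-')$ is the $L$-successor of $(m_+, m_-)$, verify that $I$ increases by exactly $2$, which requires a careful case analysis of how the floor sums $G_c(m) := \sum_{k=1}^m(2\lfloor k/c\rfloor + 1)$ and the correction $-\tilde p d^2$ respond to the transition --- in particular, when the successor has a different value of $d$, the jump in $G_a + G_b$ must precisely offset the change in $\tilde p d^2$ beyond the required $+2$ increment. A more conceptual alternative exploits Proposition \ref{prop:modelform}(1): under Taubes' isomorphism, $ECH_*(L(\tilde p, \tilde p-1), \xi_{\tilde p})$ in the zero homology class agrees with the Seiberg-Witten Floer homology of this $L$-space, which contributes one $\Z_2$ in each nonnegative even degree. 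The parity statement above forces the ECH differential to vanish (it would otherwise have to map an even-graded generator to an odd-graded one, but none exist), so the chain complex equals its homology, yielding exactly one generator per nonnegative even integer.
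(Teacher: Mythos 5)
Your setup matches the paper's: the parity computation (that $I \equiv 2m_+ - \tilde p d(d+1) \equiv 0 \pmod 2$) is correct, and the ordering you propose by $L(m_+,m_-)=(-y_-+F)m_++y_+m_-$ is exactly the paper's ordering in different coordinates. Under the paper's correspondence $e_+^{m_+}e_-^{m_+-d\tilde p}\mapsto V_{(m_+,d)}=(d,m_+)$, the $Y$-intercept of the line of slope $y_+$ through $V_{(m_+,d)}$ is $m_+-y_+d = L(m_+,m_-)/\tilde p$, so the two orderings agree. You have correctly reduced the proposition to the identity $I(e_+^{m_+}e_-^{m_-})=2N(m_+,m_-)$.

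However, that identity is the entire content of the proposition, and you have not proven it. Your plan~(a), an induction showing $I$ increases by $2$ along the $L$-ordering, is precisely the difficult ``careful case analysis''; the paper carries it out, but not by induction: it shows directly that $I$ equals twice the number of lattice points in the closed triangle bounded by $L_{(m_+,d)}$ and the two skew axes, minus $2$, establishing this by computing the Conley--Zehnder contributions as lattice-point counts in two sub-triangles and then using Pick's theorem to show that $-\tilde p d^2$ exactly cancels the overcount outside the northwest quadrant. Once this is in hand, the bijection is immediate because an irrational-slope line sweeping the quadrant picks up lattice points one at a time. Deferring this step leaves the proof with no actual content.

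Your plan~(b) has a subtler gap. Taubes' isomorphism identifies $ECH_*$ with $\widehat{HM}^{-*}$ only up to a shift of absolute grading; for an $L$-space the monopole Floer group is $\Z_2$ in degrees $d_0, d_0+2, d_0+4,\dots$ where $d_0$ is governed by a correction-term-type invariant of $(L(\tilde p,\tilde p-1),\mathfrak s_\xi)$, which is not zero for a general lens space. The ECH side is normalized by $I(\emptyset)=0$, so to conclude that the generators occupy exactly degrees $0,2,4,\dots$ you would need to show $d_0=0$ in the ECH normalization. That statement is essentially equivalent to the proposition itself, so the argument is circular unless you supply an independent computation of the grading shift --- which is, again, what the lattice-count calculation in the paper accomplishes. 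You also do not need Taubes' theorem at all once the chain-level bijection is established, which is the more elementary route the paper takes.
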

\begin{proof} From Proposition \ref{prop:grading}, we know that
\[
I\left(e_+^{m_+}e_-^{m_+-d\tilde p}\right)=-\tilde pd^2+\sum_{i=1}^{m_+}\left(2\left\lfloor\frac{i}{y_+}\right\rfloor+1\right)+\sum_{j=1}^{m_+-d\tilde p}\left(2\left\lfloor\frac{j}{-y_-+F}\right\rfloor+1\right)
\]

We will obtain the bijection $I:\Z_{\geq0}^2\to2\Z_{\geq0}$ by counting lattice points in a polygonal region. Let $k_+=(1,0)$ and $k_-=(1,\tilde p)$. To each generator $e_+^{m_+}e_-^{m_+-d\tilde p}$ we associate the following lattice point in $\Z^2=H_1(T^2;\Z)$
\[
V_{(m_+,d)}=(d,m_+)
\]
It is straightforward to check that $V_{(m_+,d)}$ is in the northwest of the four skew quadrants determined by the axes spanned by $k_\pm$ (inclusive of axes). In particular, the map given by $e_+^{m_+}e_-^{m_+-d\tilde p}\mapsto V_{(m_+,d)}$ is a bijection to the northwest quadrant.

Let $L_{(m_+,d)}$ denote the line through $V_{(m_+,d)}$ of slope $y_+$. We claim that $I\left(e_+^{m_+}e_-^{m_+-d\tilde p}\right)$ equals twice the number of lattice points contained in the triangle enclosed by $L_{(m_+,d)}$ and the axes spanned by $k_\pm$, minus two.

The contribution to the Conley-Zehnder index from $e_+^{m_+}$ is given by twice the number of lattice points in the triangle bounded by the horizontal axis, the line $L_{(m_+,d)}$, and the vertical line through $V_{(m_+,d)}$, including the boundary points, except that the points on the vertical line are counted once rather than twice and $V_{(m_+,d)}$ is not counted. The reasoning is just as in the case of the ellipsoid, see \cite{echlec}.

The contribution to the Conley-Zehnder index from $e_-^{m_+-d\tilde p}$ is given by twice the number of lattice points in the triangle bounded by the skew axis spanned by $k_-$, the line $L_{(m_+,d)}$, and the vertical line through $V_{(m_+,d)}$, including the boundary points, except that the points on the vertical line are counted once rather than twice and $V_{(m_+,d)}$ is not counted. The reasoning is just as in the case of the ellipsoid, see \cite{echlec}, after composing the entire picture by the automorphism $\begin{pmatrix}-\tilde p&1\\1&0\end{pmatrix}$ of the lattice.

Finally, we claim that $-\tilde pd^2$ removes the overcount of all points strictly outside the northwest quadrant. There are three cases to check:
\begin{itemize}
\item When $d>0$, the vertical line through $V_{(m_+,d)}$ hits the horizontal axis to the right of the origin, forming a triangle in the northeast quadrant with the axes. This triangle has height $d\tilde p$ and width $d$, therefore area $\frac{1}{2}\tilde pd^2$. Because the triangle is a lattice polygon, we can use Pick's theorem to obtain
\[
\tilde pd^2=B+2I-2
\]
where $B$ denotes the number of boundary points of the triangle and $I$ denotes the number of interior points.

Let $V$ denote the number of boundary points on the vertical line which are below the skew axis, $H$ the number of boundary points on the horizontal axis which are not on the vertical line nor are the origin, and $S$ the number of boundary points on the skew axis. Then $B=V+H+S$. Meanwhile, the contribution to the Conley-Zehnder index from $e_+^{m_+}$ overcounts by $V+2I+2H$.

Because $S=H+2$,
\[
V+2I+2H=V+H+(S-2)+2I=B+2I-2
\]
therefore the contribution to the Conley-Zender index from $e_+^{m_+}$ overcounts by $\tilde pd^2$.

\item When $d=0$, the vertical line through $V_{(m_+,0)}=(m_+,0)$ passes through the origin, therefore there are no lattice points in either triangle outside the southeast quadrant, which correspond to the fact that the index is totally determined by the Conley-Zehnder indices of the orbits.

\item The case when $d<0$ is similar to the $d>0$ case, except that the triangle is now in the southwest quadrant. Figure \ref{fig:toricindexcomp} indicates the relevant regions in the case $d=-1$.

\end{itemize}

\begin{figure}
\centering
\includegraphics[width=100mm]{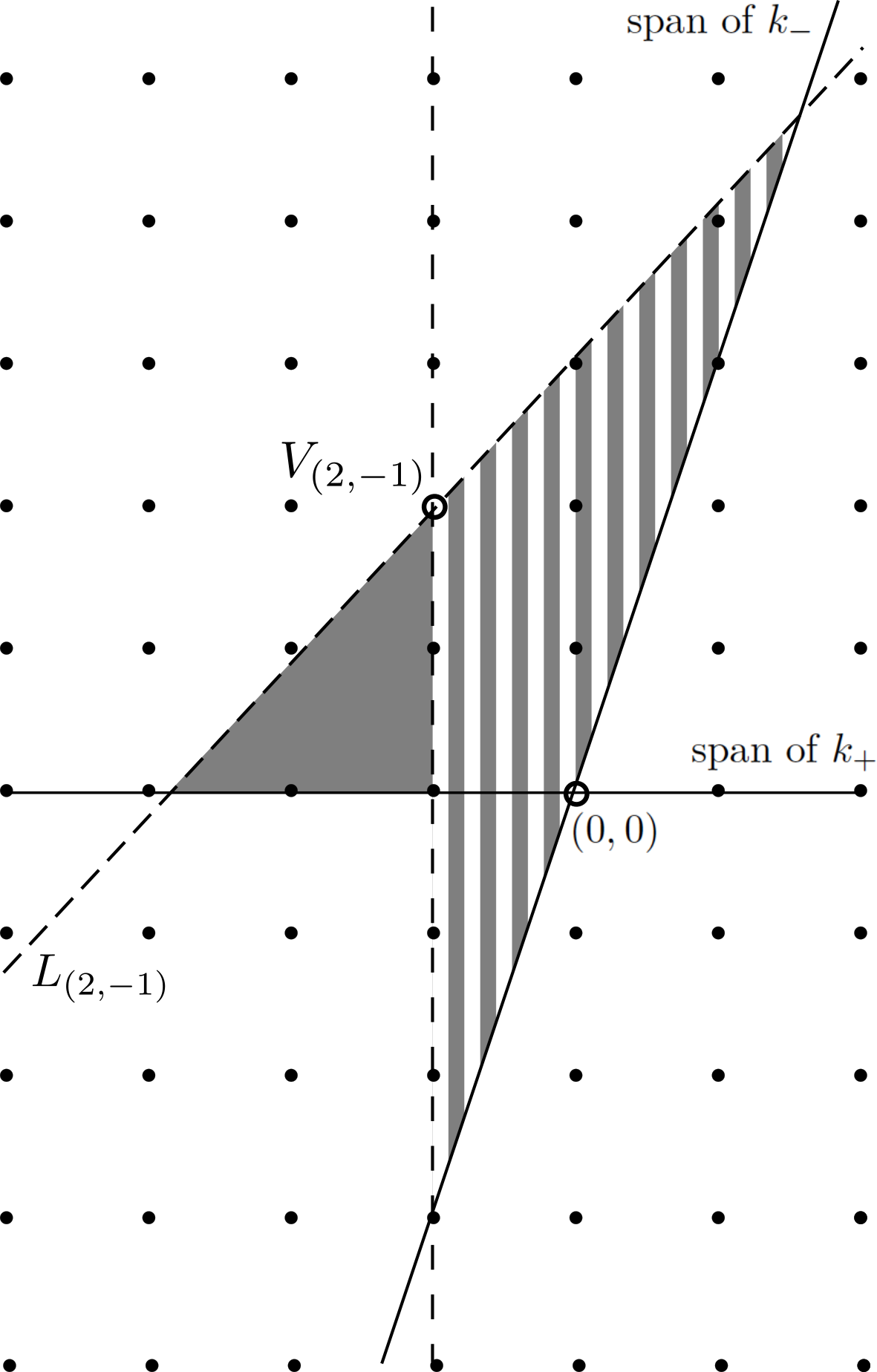}
\caption{The manifold in question is $L(3,2)$ with $y_+>1$. The solid triangle is used to compute $CZ^I_{\tau_{A_0}}\left(e_+^2\right)$ while the dashed triangle is used to compute $CZ^I_{\tau_{A_0}}\left(e_-^5\right)$. The triangle below the horizontal axis has been overcounted, corresponding to $Q_{\tau_{3A_0}}(S_-)$.}
\label{fig:toricindexcomp}
\end{figure} 

It remains to show that the index realizes the bijection between ECH generators and nonnegative even integers. To do this we introduce the following ordering on the points in the northwest quadrant. They can be indexed by the order that they are included in the half space to the right of and below a line of slope $y_+$ as it is moved to the left and up. A line of the irrational slope only ever contains one lattice point, so as the line moves it incorporates all the lattice points in the northwest quadrant, one at a time.

The index of a generator $e_+^{m_+}e_-^{m_+-d\tilde p}$ is given by twice the number of lattice points which come strictly before $V_{(m_+,d)}$ in the ordering by inclusion in the half space below a line of slope $y_+$. Therefore the ECH index is the composition of the bijection from $e_+^{m_+}e_-^{m_+-d\tilde p}$ to $V_{(m_+,d)}$ to its place in the ordering multiplied by two, starting at zero.

\end{proof}

We can immediately compute the embedded contact homology of $(L(\tilde p,\tilde p-1),\xi_{\tilde\psi})$.

\begin{cor}\label{cor:echcomp}
\[
ECH_*(L(\tilde p,\tilde p-1),\xi_{\tilde\psi})=\begin{cases}\Z/2\Z&\text{ if }*\in2\Z_{\geq0}
\\0&\text{ else}\end{cases}
\]
\end{cor}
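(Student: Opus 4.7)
The plan is to deduce the corollary almost immediately from Proposition \ref{prop:modelform} and Proposition \ref{prop:indexevenintegers}. First, since ECH depends only on the contact structure (by the invariance statement recalled at the end of \S\ref{subsec:ech}), and since $\ker\lambda_{\tilde p}$ is contactomorphic to $\xi_{\tilde\psi}=\ker\lambda_{\tilde\psi}$ by conclusion (1) of Proposition \ref{prop:modelform}, we have
\[
ECH_*(L(\tilde p,\tilde p-1),\xi_{\tilde\psi}) \;\cong\; H_*\bigl(ECC_*(L(\tilde p,\tilde p-1),\lambda_{\tilde p},J),\partial\bigr)
\]
for any $\lambda_{\tilde p}$-compatible almost complex structure $J$. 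So it suffices to compute the homology of the right-hand side.

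Next, I would invoke Proposition \ref{prop:indexevenintegers}, which establishes that the ECH index is a bijection between the set of generators $\{e_+^{m_+}e_-^{m_+-d\tilde p}\mid m_+\ge 0,\, d\in\Z,\, m_+-d\tilde p\ge 0\}$ and the set of nonnegative even integers $2\Z_{\ge 0}$. In particular, there is exactly one generator in each even degree $2k$, and no generators at all in any odd degree (nor in any negative degree). Since we are working with $\Z/2\Z$ coefficients, this means
\[
ECC_{2k}(L(\tilde p,\tilde p-1),\lambda_{\tilde p},J)\cong\Z/2\Z \quad\text{for }k\ge 0,\qquad ECC_*=0 \text{ otherwise.}
\]

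The final step is to observe that the ECH differential $\partial$ has degree $-1$ (it counts currents with $I=1$), so it sends each chain group to one living in a degree of opposite parity. Since all odd-degree chain groups vanish, $\partial$ is identically zero. Hence the homology coincides with the chain complex, giving exactly the claimed answer. There is no real obstacle here; the work was already done in Proposition \ref{prop:indexevenintegers}, and the corollary is purely a parity observation combined with contactomorphism invariance of ECH.
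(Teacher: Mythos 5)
Your proof is correct and follows essentially the same route as the paper: invoke the contactomorphism from Proposition \ref{prop:modelform} to replace $\xi_{\tilde\psi}$ by $\xi_{\tilde p}$, then use Proposition \ref{prop:indexevenintegers} to see there is exactly one generator in each nonnegative even degree and none in odd degree, so the degree $-1$ differential vanishes and the homology equals the chain complex.
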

\begin{proof} $ECH_*(L(\tilde p,\tilde p-1),\xi_{\tilde\psi})=ECH_*(L(\tilde p,\tilde p-1),\xi_{\tilde p})$, and the differential in the latter vanishes as there are no odd index generators.
\end{proof}

\subsection{Computation of $ECH_*^{\F_{e_+}+\F_{e_-}\leq \ell}(L(\tilde p,\tilde p-1),\xi_{\tilde\psi},e_+,e_-,\rot(e_+),\rot(e_-))$}\label{subsec:computekfech}

We use the bijection between generators and vertices in the plane introduced in the proof of Proposition \ref{prop:indexevenintegers} to compute the homology of $ECC_*(L(\tilde p,\tilde p-1),\lambda_{\tilde p},J)$ filtered by $\F_{e_+}+\F_{e_-}$.

When we set $B=e_\pm$, respectively, we obtain
\begin{align*}
\F_{e_+}\left(e_+^{m_+}e_-^{m_-}\right)&=m_+\rot(e_+)+lk(e_+,e_-)
\\&=m_+\left(\frac{1}{y_+}-\frac{1}{\tilde p}\right)+\frac{m_-}{\tilde p}
\end{align*}
and
\begin{align*}
\F_{e_-}\left(e_+^{m_+}e_-^{m_-}\right)&=lk(e_-,e_+)+m_-\rot(e_-)
\\&=\frac{m_+}{\tilde p}+m_-\left(\frac{1}{-y_-+F}-\frac{1}{\tilde p}\right)
\end{align*}
which both follow from
\[
lk(e_+,e_-)=lk(e_-,e_+)=\frac{S_+\cdot e_-}{\tilde p}=\frac{1}{\tilde p}\left(\Sigma_+\cdot e_-+D_-\cdot e_-\right)=\frac{0+1}{\tilde p}
\]

In the lattice description, we can compute using elementary geometry that the value $\F_{e_+}\left(e_+^{m_+}e_-^{m_+-d\tilde p}\right)$ is the horizontal distance between the $x$-intercept of $L_{(m_+,d)}$ and $V_{(m_+,d)}$, and similarly, that the value $\F_{e_-}\left(e_+^{m_+}e_-^{m_+-d\tilde p}\right)$ is the horizontal distance between $V_{(m_+,d)}$ and the intersection between the skew axis and the line $L_{(m_+,d)}$. (Using these ideas, we could at this point also compute the ECH of $L(\tilde p,\tilde p-1)$ filtered by the orbits $e_\pm$ separately.) Therefore, the sum filtration
\[
\F_{e_+}\left(e_+^{m_+}e_-^{m_-}\right)+\F_{e_-}\left(e_+^{m_+}e_-^{m_-}\right)=\frac{m_+}{y_+}+\frac{m_-}{-y_-+F}
\]
can be expressed as the width of the generator.

Given real numbers $a$ and $b$, let $N_k(a,b)$ denote the $k^\text{th}$ term in the sequence of nonnegative integer linear combinations of $a$ and $b$, listed with repetition and in increasing order, starting with $N_0(a,b)=0$. We encode the values of the sum filtration using the sequence of $N_k$s as follows. Given $k$, there are $m_\pm(k)$ for which $I(e_+^{m_+(k)}e_-^{m_-(k)})=2k$. Let $w(k)$ denote the integer for which
\[
N_{w(k)}\left(\frac{1}{y_+},\frac{1}{-y_-+F}\right)=\F_{e_+}\left(e_+^{m_+}e_-^{m_-}\right)+\F_{e_-}\left(e_+^{m_+}e_-^{m_-}\right)
\]
Notice that if $k\neq k'$ then $w(k)\neq w(k')$, because the width of the triangle representing each generator  strictly increases as we move the line through $V_{(m_+,d)}$ of slope $y_+$ to the left and up (which corresponds to increasing the ECH index). In addition, $w(k)\geq k$, because $w(k)<k$ would imply that the $k$ generators with index lower than $I(e_+^{m_+(k)}e_-^{m_-(k)})$ have a lower filtration value, which is not possible because filtration values strictly increase with index. Unless $\tilde p=1$, when $k$ is large enough $w(k)>k$, because not all pairs $(m_+,m_-)$ can satisfy $m_+\equiv m_-\mod\tilde p$.

 The above discussion proves
\begin{prop}\label{prop:sumkfech}
\[
ECH_{2k}^{\F_{e_+}+\F_{e_-}\leq\ell}(L(\tilde p,\tilde p-1),\xi_{\tilde\psi},e_+,e_-,\rot(e_+),\rot(e_-))=\begin{cases}\Z/2\Z&\hspace{-.15in}\text{ if }\ell\geq N_{w(k)}\left(\frac{1}{y_+},\frac{1}{-y_-+F}\right)
\\0&\hspace{-.15in}\text{ else}\end{cases}
\]
\end{prop}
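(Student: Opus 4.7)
My plan is to assemble the ingredients developed in the preceding subsections; the proposition is essentially a synthesis of three observations, so the proof should be short and I do not anticipate any serious obstacles.

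The first step is to reduce to the explicit model contact form. I would invoke Theorem \ref{thm:sumkffunctoriality} to replace the abstract form $\lambda_{\tilde\psi}$ on $L(\tilde p,\tilde p-1)$ by the ellipsoidal quotient form $\lambda_{\tilde p}$ built in Proposition \ref{prop:modelform}. The hypotheses of Theorem \ref{thm:sumkffunctoriality} require contactomorphic contact structures, a pair of transverse knots realized as elliptic Reeb orbits for both forms, and matching rotation numbers on the nullhomologous covers $e_\pm^{\tilde p}$; these are exactly the conclusions (1)--(3) of Proposition \ref{prop:modelform}, combined with the computation of $\rot_{\tau_{\Sigma_\pm \cup D_\mp}}(e_\pm^{\tilde p})$ from Lemma \ref{lem:computerotpm}. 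After this identification it suffices to compute $ECH_{2k}^{\F_{e_+}+\F_{e_-}\leq\ell}(L(\tilde p,\tilde p-1),\lambda_{\tilde p},J)$.

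The second step is to observe that the differential vanishes. By Proposition \ref{prop:indexevenintegers} every generator of $ECC_*(L(\tilde p,\tilde p-1),\lambda_{\tilde p},J)$ sits in even ECH degree, while the ECH differential shifts degree by $-1$, so it is identically zero on the full complex. Since $\F_{e_+}$ and $\F_{e_-}$ are filtrations by Lemma \ref{lem:pffiltration}, their sum is also a filtration, and the subspace $ECC^{\F_{e_+}+\F_{e_-}\leq\ell}_*$ is genuinely a subcomplex on which the restricted differential must also vanish. Hence $ECH_{2k}^{\F_{e_+}+\F_{e_-}\leq\ell}$ is simply the $\Z/2\Z$-span of the generators of index $2k$ whose sum filtration value is at most $\ell$.

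The third step is to read off which generator contributes. Proposition \ref{prop:indexevenintegers} provides a bijection between even indices and vertices $V_{(m_+,d)}$ in the northwest quadrant; in particular there is a unique generator $e_+^{m_+(k)}e_-^{m_-(k)}$ in index $2k$. The lattice computation established in the paragraph immediately preceding the proposition identifies
\[
(\F_{e_+}+\F_{e_-})\bigl(e_+^{m_+(k)}e_-^{m_-(k)}\bigr)=\frac{m_+(k)}{y_+}+\frac{m_-(k)}{-y_-+F},
\]
and by the very definition of $w(k)$ this equals $N_{w(k)}\!\left(\tfrac{1}{y_+},\tfrac{1}{-y_-+F}\right)$. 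Thus the degree-$2k$ summand of the filtered homology is $\Z/2\Z$ when $\ell\geq N_{w(k)}\!\left(\tfrac{1}{y_+},\tfrac{1}{-y_-+F}\right)$ and zero otherwise, which is exactly the claimed formula. The only conceptually nontrivial point is the invocation of Theorem \ref{thm:sumkffunctoriality}, but its proof has already been sketched in \S\ref{subsec:constructkfech} and its hypotheses hold on the nose here.
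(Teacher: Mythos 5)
Your proposal is correct and follows essentially the same route as the paper: pass to $\lambda_{\tilde p}$ via Theorem \ref{thm:sumkffunctoriality} (which is what makes the invariant notation on the left-hand side legitimate), note that Proposition \ref{prop:indexevenintegers} forces the differential to vanish, and read off the unique index-$2k$ generator's sum-filtration value, which equals $N_{w(k)}\bigl(\tfrac{1}{y_+},\tfrac{1}{-y_-+F}\bigr)$ by the definition of $w(k)$. The paper's "above discussion proves" is precisely these three observations, so there is nothing substantively different in your account.
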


\begin{ex} Refine the example illustrated in Figure \ref{fig:toricindexcomp} by setting $y_+=1+\frac{e}{30}$ and $-y_-+F=2-\frac{e}{30}$. Then the first twelve values of $w(k)$, in order of increasing $k$, are $0, 4, 5, 12, 13, 14, 15, 25, 26, 27, 28, 30$. These were found by computing the widths of the generators in indices up to $22$ and comparing them to the first twenty-nine values of the sequence with $k^\text{th}$ entry $N_k\left(\frac{1}{1+\frac{e}{30}},\frac{1}{2-\frac{e}{30}}\right)$.
\end{ex}

\section{Proof of the main theorem}\label{sec:proofmainthm}

We prove the main theorem. Our proof is inspired by the proof of \cite[Theorem \ref{thm:disk}]{mean}, however, there are several significant differences, which we list here and discuss at the appropriate stage in the proof.

Our Lemma \ref{lem:Nkinequality}, an analogue of \cite[Lemma 3.2]{mean} with $w(k)$ substituted for $k$, relies on our understanding of the relationship between $w(k)$ and the knot filtration on ECH, developed in \S\ref{subsec:computekfech}.

In Proposition \ref{prop:penultimate}, it is only possible to find an upper bound for the infimum of the mean action of periodic orbits of $\psi$ asymptotically, that is, by increasing $y_+$ arbitrarily high. This is because of the way the inequality we obtain in Lemma \ref{lem:Nkinequality} differs from the inequality obtained in \cite[Lemma 3.2]{mean}. Heuristically, this is because we need the ratio between the boundary rotation numbers to be close to one in order to prove that the periodic orbit identified in Proposition \ref{prop:pairineqscontact} is not the empty orbit.

The construction of the contact manifold in Proposition \ref{prop:constructcontact} only works when $y_+-y_-\in\Z$, $F\in\Z$, and to extend to $\psi$ for which these quantities are rational requires Lemma \ref{lem:rational}, which has no parallel in \cite{mean}.

Finally, the last part of the proof is much more complicated than the analogous section of \cite{mean}, the proof of Theorem \ref{thm:disk} assuming \cite[Proposition 2.2]{mean}; we discuss the specific reasons for this when we introduce that part of the proof.

\subsection{Initial bound in contact-geometric setting}\label{subsec:firstupperbound} 

\begin{prop}\label{prop:pairineqscontact} Let $\lambda$ be a contact form on $L(p,p-1)$ which is contactomorphic to the contact form $\lambda_p$ from Lemma \ref{lem:stdlens}. Suppose that both binding components $b_\pm$ of the open book decomposition $(H_p,\Pi_p)$ are elliptic for $\lambda$. Further suppose that the rotation numbers of $b_\pm^p$ in the trivializations which have linking number zero with $b_\pm$ with respect to their respective Seifert surfaces are $p\rot(b_\pm)$, respectively, where
\[
\left(\rot(b_+)+\frac{1}{p}\right)^{-1}+\left(\rot(b_-)+\frac{1}{p}\right)^{-1}=p
\]
Then for all $\epsilon>0$, for all sufficiently large integers $k$ there is an orbit set $\alpha_k$ not including either binding component and nonnegative integers $m_{k,\pm}$ for which
\begin{align}
&I\left(b_+^{m_{k,+}}\alpha_k b_-^{m_{k,-}}\right)=2k \label{eqn:indexbalphab}
\\&\A(\alpha_k)\leq\sqrt{2k(\vol(L(p,p-1),\lambda)+\epsilon)}-m_{k,+}\A(b_+)-m_{k,-}\A(b_-) \label{eqn:actionbalphab}
\\&\alpha_k\cdot A_0\geq N_{w(k)}\left(\rot(b_+)+\frac{1}{p},\rot(b_-)+\frac{1}{p}\right)-m_{k,+}\rot(b_+)-m_{k,-}\rot(b_-) \label{eqn:intersectionbalphab}
\end{align}
\end{prop}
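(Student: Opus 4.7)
The plan is to combine the action control of the Cristofaro-Gardiner–Hutchings–Ramos asymptotic formula with the intersection control of the sum knot filtration, applied to a single nonzero ECH class.

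First I would identify the class. Because $\ker\lambda$ is contactomorphic to $\ker\lambda_p$ from Lemma \ref{lem:stdlens}, which shares the abstract open book $(A,D_p)$ with the model form $\lambda_{\tilde p}$ of Proposition \ref{prop:modelform} (taking $\tilde p=p$), Theorem \ref{thm:contactomorphic} and Corollary \ref{cor:echcomp} give $ECH_{2k}(L(p,p-1),\ker\lambda)\cong\Z/2\Z$ for every $k\geq 0$; let $\sigma_k$ denote the nonzero class. Applied to the sequence $\{\sigma_k\}$ with $I(\sigma_k)=2k\to\infty$, Theorem \ref{thm:asymp} yields, for any $\epsilon>0$ and all sufficiently large $k$,
\[
c_{\sigma_k}(L(p,p-1),\lambda)\leq\sqrt{2k\bigl(\vol(L(p,p-1),\lambda)+\epsilon\bigr)}.
\]
By definition of the ECH spectral number (absorbing any infimum defect into $\epsilon$), I pick a cycle $z_k\in ECC_{2k}(L(p,p-1),\lambda,J)$ representing $\sigma_k$ in which every generator has symplectic action bounded by the right-hand side.

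Next I would force the presence of a high-filtration generator in $z_k$. The hypothesis $\bigl(\rot(b_+)+\tfrac{1}{p}\bigr)^{-1}+\bigl(\rot(b_-)+\tfrac{1}{p}\bigr)^{-1}=p$ places us in the rotation-number setting of Proposition \ref{prop:modelform} with $y_+$ and $-y_-+F$ replaced by $\bigl(\rot(b_+)+\tfrac{1}{p}\bigr)^{-1}$ and $\bigl(\rot(b_-)+\tfrac{1}{p}\bigr)^{-1}$. Since the $b_\pm$ are the components of the Hopf-link binding $H_p$, Theorem \ref{thm:sumkffunctoriality} (together with its extension to contactomorphic contact structures whose identifying diffeomorphism matches the knots and preserves their rotation numbers) transfers Proposition \ref{prop:sumkfech} to the filtered ECH of $(L(p,p-1),\lambda,J,b_+,b_-)$. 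In particular,
\[
ECH_{2k}^{\F_{b_+}+\F_{b_-}\leq\ell}=0 \quad\text{whenever } \ell<N_{w(k)}\bigl(\rot(b_+)+\tfrac{1}{p},\rot(b_-)+\tfrac{1}{p}\bigr).
\]
If every generator of $z_k$ had sum filtration strictly below this threshold, then $z_k$ would be a cycle in a filtered subcomplex of vanishing homology in degree $2k$, hence a boundary in $ECC$, contradicting $[z_k]=\sigma_k\neq 0$. Therefore $z_k$ contains at least one generator of sum filtration $\geq N_{w(k)}$; I write it as $b_+^{m_{k,+}}\alpha_k b_-^{m_{k,-}}$, with $\alpha_k$ containing no copies of $b_\pm$.

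To conclude, I read off the three inequalities for this generator. Since $z_k\in ECC_{2k}$, every generator has ECH index $2k$, giving (\ref{eqn:indexbalphab}). Splitting the action as $\A(b_+^{m_{k,+}}\alpha_k b_-^{m_{k,-}})=m_{k,+}\A(b_+)+\A(\alpha_k)+m_{k,-}\A(b_-)$ and invoking the spectral bound from the first step gives (\ref{eqn:actionbalphab}). For (\ref{eqn:intersectionbalphab}), I expand $\F_{b_+}+\F_{b_-}$ on the generator via $\F_B(B^m\alpha)=m\rot(B)+lk(B,\alpha)$ and use the identity $\alpha_k\cdot A_0=lk(b_+,\alpha_k)+lk(b_-,\alpha_k)$, which follows from $\partial A_0$ being carried by the binding, to rewrite the inequality $\F_{b_+}+\F_{b_-}\geq N_{w(k)}$ as a lower bound on $\alpha_k\cdot A_0$. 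The main obstacle is precisely this compatibility between the two filtrations in the second step: it is not automatic that a cycle nearly minimizing the action filtration also reaches the threshold value of the sum knot filtration, and the resolution uses that $N_{w(k)}$ is a homology-class invariant of $\sigma_k$ rather than an artifact of a chosen representative, so any cycle representing $\sigma_k$—in particular the action-optimized $z_k$—must contain a generator at the threshold.
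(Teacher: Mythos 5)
Your proposal follows essentially the same route as the paper's proof: (1) use Theorem \ref{thm:asymp} applied to the nonzero class in $ECH_{2k}$ to obtain a cycle in $ECC_{2k}$ with action bounded by $\sqrt{2k(\vol+\epsilon)}$; (2) invoke Theorem \ref{thm:sumkffunctoriality} (in its contactomorphism-extended form) together with Proposition \ref{prop:sumkfech} to force at least one generator of that cycle to have sum filtration $\geq N_{w(k)}$; (3) split off the $b_\pm$ factors to read off (\ref{eqn:indexbalphab}) and (\ref{eqn:actionbalphab}); and (4) expand the sum filtration and use the linking-number identity $\alpha_k\cdot A_0 = lk(b_+,\alpha_k)+lk(b_-,\alpha_k)$ to obtain the intersection bound. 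Your phrasing of step (2) as a contradiction (a cycle living entirely in a filtration-truncated subcomplex of vanishing homology would be a boundary) is the same argument the paper leaves implicit, and your handling of the spectral-number infimum is a correct minor technical refinement.
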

\begin{proof}
Let $x_k$ be a cycle in $ECC_{2k}(L(p,p-1),\lambda,J)$ representing the generator of the group $ECH_{2k}(L(p,p-1),\xi)$. Then $\{[x_k]\}$ is a sequence satisfying the hypotheses of Theorem \ref{thm:asymp}, so for $k$ sufficiently large
\[
\frac{c_{[x_k]}(L(p,p-1),\lambda)^2}{2k}\leq\vol(L(p,p-1),\lambda)+\epsilon
\]
So for all such $k$, there is a finite set of orbit sets $x_{k_i}$ with $I(x_{k_i})=2k$ for which
\[
x_k=\sum_{i}x_{k_i}
\]
and each orbit set $x_{k_i}$ satisfies
\[
\A(x_{k_i})\leq c_{[x_k]}(L(p,p-1),\lambda)\Rightarrow\A(x_{k_i})\leq\sqrt{2k(\vol(L(p,p-1)+\epsilon)}
\]

$x_{k_i}$ can be written in the form $b_+^{m_{k_i,+}}\alpha_{k_i}b_-^{m_{k_i,-}}$ for some orbit set $\alpha_{k_i}$ not including either $b_\pm$, from which we get
\[
\A(\alpha_{k_i})\leq\sqrt{2k(\vol(L(p,p-1),\lambda)+\epsilon)}-m_{k_i,+}\A(b_+)-m_{k_i,-}\A(b_-)
\]

By Proposition \ref{prop:sumkfech}, Theorem \ref{thm:sumkffunctoriality}, and the fact that $\lambda$ and $\lambda_p$ share contact structures, the elliptic orbits $b_\pm$, and their rotation numbers, a cycle $x_k$ representing the generator of $ECH_{2k}(L(p,p-1),\xi)$ must have at least one $x_{k_i}$ satisfying
 \begin{equation}\label{eqn:sumbound}
\F_{e_+}(x_{k_i})+\F_{e_-}(x_{k_i})\geq N_{w(k)}\left(\rot(b_+)+\frac{1}{p},\rot(b_-)+\frac{1}{p}\right)
\end{equation}
For all $k$, choose one such $x_{k_i}$ and write it in the form $b_+^{m_{k,+}}\alpha_k b_-^{m_{k,-}}$. Notice that $\alpha_k$ satisfies both (\ref{eqn:indexbalphab}) and (\ref{eqn:actionbalphab}).

Expanding the left hand side of (\ref{eqn:sumbound}) and cancelling the contributions from $lk(b_\pm,b_\mp)$ against some of the contributions from $\rot(b_\pm)$ as in \S\ref{subsec:computekfech} gives
\begin{align}
\F_{e_+}(b_+^{m_{k,+}}\alpha_k b_-^{m_{k,-}})+\F_{e_-}(b_+^{m_{k,+}}\alpha_k b_-^{m_{k,-}})&=m_{k,+}\left(\rot(b_+)+\frac{1}{p}\right)+m_{k,-}\left(\rot(b_-)+\frac{1}{p}\right) \nonumber
\\&\;\;\;\;+lk(b_+,\alpha)+lk(b_-,\alpha) \label{eqn:sumfiltbalphab}
\end{align}

By definition of linking number, if $S_{b_\pm}$ are Seifert surfaces for $b_\pm^p$,
\[
lk(b_\pm,\alpha)=\frac{\alpha\cdot S_{b_\pm}}{p}
\]
Because $H_2(L(p,p-1);\Z)=0$, the closed surface $-S_{b_+}\cup_{b_+^p}nA_0\cup_{b_-^p}-S_{b_-}$ has intersection number zero with any one-cycle. Therefore
\[
lk(b_+,\alpha)+lk(b_-,\alpha)=\frac{1}{p}\left(\alpha\cdot S_{b_+}+\alpha\cdot S_{b_-}\right)=\frac{\alpha\cdot pA_0}{p}=\alpha\cdot A_0
\]
which, combined with (\ref{eqn:sumbound}) and (\ref{eqn:sumfiltbalphab}), proves (\ref{eqn:intersectionbalphab}).

\end{proof}

\subsection{Final bound in annulus setting}\label{subsec:finalbound}

We need to reinterpret the lower bound (\ref{eqn:intersectionbalphab}) in terms of the index $2k$.

\begin{lem}\label{lem:Nkinequality} Given positive real numbers $a$ and $b$ with $a+b\in\Z$ and $\frac{a}{b}$ irrational, there are constants $c_1$ and $c_2$ such that for every $k\in\Z_{\geq0}$,
\begin{equation}\label{eqn:Nkinequality}
N_{w(k)}\left(\frac{1}{a},\frac{1}{b}\right)^2\geq\frac{2k(a+b)}{ab}-c_1k^\frac{1}{2}+c_2
\end{equation}
\end{lem}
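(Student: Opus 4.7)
The plan is to interpret $N_{w(k)}(1/a,1/b)$ geometrically and reduce the claim to a sublattice point count in a right triangle. By the lattice-point discussion in \S\ref{subsec:computekfech}, $N_{w(k)}(1/a,1/b)$ equals the width
\[
L_k := \frac{m_+(k)}{a} + \frac{m_-(k)}{b}
\]
of the $k$th element (ordered by width, equivalently by ECH index) of the sublattice
\[
\Lambda_{\geq 0} := \{(m_+, m_-) \in \Z_{\geq 0}^2 : m_+ \equiv m_- \pmod{\tilde p}\}, \qquad \tilde p := a+b \in \Z.
\]
The irrationality hypothesis $a/b \notin \Q$ ensures all widths are distinct, so this ordering is well-defined.

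The main step is a lattice-point count in the right triangle $T_L \subset \R_{\geq 0}^2$ with vertices $(0,0)$, $(aL, 0)$, $(0, bL)$; it has area $abL^2/2$ and perimeter $O(L)$. The ambient lattice $\Lambda := \{(m_+, m_-) \in \Z^2 : m_+ \equiv m_- \pmod{\tilde p}\}$ has index $\tilde p$ in $\Z^2$, so covolume $\tilde p$. A standard Gauss-style estimate gives constants $C, C'$ (depending only on $a, b$) with
\[
\left| |\Lambda_{\geq 0} \cap T_L| - \frac{abL^2}{2\tilde p} \right| \leq CL + C',
\]
where the boundary contributions from the coordinate axes, incurred when passing from $\Lambda$ to $\Lambda_{\geq 0}$, are themselves $O(L)$ and absorbed into $C, C'$.

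Substituting $L = L_k$ and using $|\Lambda_{\geq 0} \cap T_{L_k}| = k+1$ gives
\[
\frac{abL_k^2}{2\tilde p} - CL_k - C' \;\leq\; k+1 \;\leq\; \frac{abL_k^2}{2\tilde p} + CL_k + C'.
\]
The right inequality, viewed as a quadratic in $L_k$, yields an \emph{a priori} bound $L_k \leq D\sqrt{k}+D'$ for some constants $D, D'$. Plugging this back into the left inequality and using $\tilde p = a+b$ produces
\[
L_k^2 \geq \frac{2(a+b)k}{ab} - c_1 k^{1/2} + c_2
\]
for appropriate constants $c_1, c_2$, which is \eqref{eqn:Nkinequality}. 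For the finitely many small $k$ where the bound might fail, we are free to decrease $c_2$ further, since only the existence of such constants is asserted.

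The main obstacle is not conceptual but requires careful bookkeeping: extracting the leading coefficient $2(a+b)/(ab)$ relies precisely on the sublattice covolume being $\tilde p$, and all error terms must be kept of order $O(\sqrt{k})$ or smaller to match the claimed form. Since both the area-to-count discrepancy and the resulting $\sqrt{k}$ correction come directly from the $O(L)$ boundary estimate, a clean application of Gauss's lattice-point principle to the sublattice $\Lambda$ suffices.
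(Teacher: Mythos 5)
Your proposal reaches the right quadratic inequality but by a different intermediate step than the paper. The paper works directly from the explicit ECH index formula $2k = -(a+b)d^2 + \sum_{i\leq m_+}(2\lfloor i/a\rfloor + 1) + \sum_{j\leq m_-}(2\lfloor j/b\rfloor + 1)$, drops the floor functions to get a one-sided bound, and — after multiplying by $(a+b)/(ab)$ and a short algebraic identity — recognizes the leading term as $N^2$, arriving at $\frac{2k(a+b)}{ab} \leq N^2 + c_0 N$, which it then solves as a quadratic. You replace the explicit formula with a soft Gauss-style area estimate for the sublattice $\Lambda$ of covolume $\tilde p$ inside the dilated triangle $T_L$. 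Both routes are valid and in fact compute the same thing: the floor-dropping in the paper is exactly the one useful half of your two-sided Gauss estimate. Your version is more conceptual and portable but buries the constants; the paper's is more concrete and produces them explicitly. There is, however, a labeling error in your last step: you have ``left'' and ``right'' swapped. The \emph{right} inequality $k+1 \leq \frac{abL_k^2}{2\tilde p} + CL_k + C'$ forces $L_k$ to be large, so it yields a \emph{lower} bound on $L_k$, not the a priori upper bound $L_k \leq D\sqrt{k}+D'$ you claim; that upper bound comes from the \emph{left} inequality $\frac{abL_k^2}{2\tilde p} - CL_k - C' \leq k+1$. The correct order is: the left inequality gives $L_k \leq D\sqrt{k}+D'$, and substituting that into the right inequality (rearranged as $L_k^2 \geq \frac{2\tilde p}{ab}\bigl(k+1 - CL_k - C'\bigr)$) produces the claimed lower bound $L_k^2 \geq \frac{2k(a+b)}{ab} - c_1\sqrt{k} + c_2$. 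With the labels corrected, the argument is sound.
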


\begin{proof} As discussed in \S\ref{subsec:computekfech}, $N_{w(k)}\left(\frac{1}{a},\frac{1}{b}\right)$ is the width of the triangle bounded by the vertical axis, the skew axis of slope $a+b$, and the line of slope $a$ which passes through the point $V=\left(\frac{m_+-m_-}{a+b},m_+\right)$, where $m_\pm$ are chosen so that there are precisely $k+1$ integral lattice points contained in this triangle, inclusive of the points on the boundary.

As in the proof of Proposition \ref{prop:indexevenintegers}, we can count these lattice points in another way:
\begin{align}
2k&=-(a+b)\left(\frac{m_+-m_-}{a+b}\right)^2+\sum_{i=1}^{m_+}\left(2\left\lfloor\frac{i}{a}\right\rfloor+1\right)+\sum_{j=1}^{m_-}\left(2\left\lfloor\frac{j}{b}\right\rfloor+1\right) \nonumber
\\&\leq-(a+b)\left(\frac{m_+-m_-}{a+b}\right)^2+\sum_{i=1}^{m_+}\left(\frac{2i}{a}+1\right)+\sum_{j=1}^{m_-}\left(\frac{2j}{b}+1\right) \nonumber
\\&=-\frac{m_+^2}{a+b}+\frac{2m_+m_-}{a+b}-\frac{m_-^2}{a+b}+m_++\frac{m_+(m_++1)}{a}+m_-+\frac{m_-(m_-+1)}{b} \nonumber
\\&=m_+^2\left(\frac{1}{a}-\frac{1}{a+b}\right)+\frac{2m_+m_-}{a+b}+m_-^2\left(\frac{1}{b}-\frac{1}{a+b}\right)+m_+\left(1+\frac{1}{a}\right)+m_-\left(1+\frac{1}{b}\right) \nonumber
\\&=\frac{bm_+^2}{a(a+b)}+\frac{2m_+m_-}{a+b}+\frac{am_-^2}{b(a+b)}+m_+\left(1+\frac{1}{a}\right)+m_-\left(1+\frac{1}{b}\right) \label{eqn:2ksimplified}
\end{align}
Notice that
\begin{align*}
N_{w(k)}\left(\frac{1}{a},\frac{1}{b}\right)^2&=\left(\frac{m_+}{a}+\frac{m_-}{b}\right)^2
\\&=\frac{m_+^2}{a^2}+\frac{2m_+m_-}{ab}+\frac{m_-^2}{b^2}
\end{align*}
Therefore, if we multiply both sides of (\ref{eqn:2ksimplified}) by $\frac{a+b}{ab}$, we get
\begin{align}
\frac{2k(a+b)}{ab}&\leq \frac{m_+^2}{a^2}+\frac{2m_+m_-}{ab}+\frac{m_-^2}{b^2}+\frac{(a+b)m_+}{ab}\left(1+\frac{1}{a}\right)+\frac{(a+b)m_-}{ab}\left(1+\frac{1}{b}\right) \nonumber
\\&\leq N_{w(k)}\left(\frac{1}{a},\frac{1}{b}\right)^2+c_0N_{w(k)}\left(\frac{1}{a},\frac{1}{b}\right) \label{eqn:quadraticbound}
\end{align}
The constant $c_0$ is because any nonnegative linear combination of $m_\pm$ has an upper bound in terms of some constant times $N_{w(k)}\left(\frac{1}{a},\frac{1}{b}\right)$, because $N_{w(k)}\left(\frac{1}{a},\frac{1}{b}\right)$ is itself a nonnegative linear combination of the $m_\pm$.

Use $N$ to denote $N_{w(k)}\left(\frac{1}{a},\frac{1}{b}\right)$. We can simplify (\ref{eqn:quadraticbound}):
\begin{align*}
N^2+c_0N&\geq\frac{2k(a+b)}{ab}
\\N^2+c_0N+\frac{c_0^2}{4}&\geq\frac{2k(a+b)}{ab}+\frac{c_0^2}{4}
\\N&\geq\sqrt{\frac{2k(a+b)}{ab}+\frac{c_0^2}{4}}-\frac{c_0}{2}
\\N^2&\geq\frac{2k(a+b)}{ab}+\frac{c_0^2}{4}-c_0\sqrt{\frac{2k(a+b)}{ab}+\frac{c_0^2}{4}}+\frac{c_0^2}{4}
\end{align*}
From here the estimate (\ref{eqn:Nkinequality}) follows.
\end{proof}

Next we prove a weaker version of (\ref{eqn:goal}), restricted to those $\psi$ to which Proposition \ref{prop:constructcontact} applies, and for which $y_+$, $y_->>0$.

From now on we will use the notation
\[
\hm(a,b)=\frac{2}{\frac{1}{a}+\frac{1}{b}}=\frac{2ab}{a+b}
\]
to denote the harmonic mean of $a$ and $b$. We will also sometimes refer to ``the orbits of $(\psi,y_++N)$" to refer to the orbits of $\psi$ with total and mean actions computed using the action function normalized to be $y_++N$ on $\partial_+A$ rather than $y_+$.

\begin{prop}\label{prop:penultimate} Let $\psi$ be an area-preserving diffeomorphism of $(A,\omega)$ which is rotation by $2\pi y_\pm$ near $\partial_\pm A$, whose flux applied to the class of the $(x,0)$ curve is $F\in\Z$, for which $y_+-y_-\in\Z$, both $y_+$ and $-y_-+F$ are irrational, and whose action function $f$ is positive.

Let $\A_N$ denote the total action computed with $f_{(\psi,y_++N,\beta)}$ rather than with $f_{(\psi,y_+,\beta)}$.
If
\[
\V(\tilde\psi)<\max\{y_+,-y_-+F\}
\]
then for all sufficiently large integers $N$,
\begin{equation}\label{eqn:Ngoal}
\inf\left\{\frac{\A_N(\gamma)}{\ell(\gamma)}\;\middle|\;\gamma\in\P(\psi)\right\}\leq\sqrt{\hm(y_++N,-y_-+F+N)(\V(\tilde\psi)+N)}
\end{equation}
\end{prop}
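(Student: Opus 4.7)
The strategy is to apply Proposition \ref{prop:constructcontact} to the shifted pair $(\psi, y_+ + N)$, extract orbit sets via Proposition \ref{prop:pairineqscontact}, and translate back using the bijection in Proposition \ref{prop:constructcontact}(3). Write $a := y_+ + N$, $b := -y_- + F + N$, $p' := a + b = \tilde p + 2N$, and $L := 2(\V(\tilde\psi) + N)$. For $N$ sufficiently large, the action function $f + N$ is positive, $a$ and $b$ remain irrational, $y_+ - y_-$ and $F + 2N$ are integers, and (by avoiding a countable exceptional set of $N$) $a/b$ can be arranged irrational as required by Proposition \ref{prop:modelform}. Proposition \ref{prop:constructcontact} produces a contact form $\lambda_{\tilde\psi + N}$ on $L(p', p' - 1)$ of volume $L$, with elliptic binding orbits $e_\pm$ of action $1$ satisfying $\rot(e_\pm) + 1/p' = 1/a, 1/b$ by Lemma \ref{lem:computerotpm}. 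Since $(1/a)^{-1} + (1/b)^{-1} = p'$, Proposition \ref{prop:pairineqscontact} applies: for each $\epsilon > 0$ and each sufficiently large $k$, it yields nonnegative integers $m_{k, \pm}$ and an orbit set $\alpha_k$ not containing $e_\pm$ with
\[
\A(\alpha_k) \leq \sqrt{2k(L+\epsilon)} - m_{k,+} - m_{k,-}, \quad \alpha_k \cdot A_0 \geq N_{w(k)}\!\left(\tfrac{1}{a}, \tfrac{1}{b}\right) - \tfrac{b\, m_{k,+}}{ap'} - \tfrac{a\, m_{k,-}}{bp'}.
\]

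I would then bound $R_k := \A(\alpha_k)/(\alpha_k \cdot A_0)$ by $\sigma := \sqrt{ab(L+\epsilon)/p'}$. The identity $\sqrt{2k(L+\epsilon)} = \sigma\sqrt{2kp'/(ab)}$, together with the asymptotic $N_{w(k)}(1/a, 1/b) \geq \sqrt{2kp'/(ab)}(1 - o_k(1))$ from Lemma \ref{lem:Nkinequality}, reduces the desired inequality $R_k \leq \sigma + o_k(1)$ to the algebraic condition $m_{k,+}(1 - \sigma \beta_+) + m_{k,-}(1 - \sigma \beta_-) \geq 0$, where $\beta_+ := b/(ap')$ and $\beta_- := a/(bp')$. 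A direct computation gives $(\sigma \beta_+)^2 = b^3(L+\epsilon)/(a(p')^3)$ and $(\sigma \beta_-)^2 = a^3(L+\epsilon)/(b(p')^3)$. Since $a, b, p' \sim N$, $L \sim 2N$, and $L < 2\max(a, b)$ (which is the hypothesis $\V(\tilde\psi) < \max\{y_+, -y_- + F\}$), both quantities tend to $1/4$ as $N \to \infty$, so $\sigma \beta_\pm < 1$ for $N$ sufficiently large and the algebraic condition is automatic. Taking $k \to \infty$ then $\epsilon \to 0$ gives $\limsup_k R_k \leq \sqrt{abL/p'} = \sqrt{\hm(a, b)(\V(\tilde\psi) + N)}$.

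The main obstacle is to ensure that $\alpha_k$ is nonempty for infinitely many $k$, since otherwise the Proposition \ref{prop:constructcontact}(3) bijection only produces the bindings $e_\pm$ and yields no element of $\P(\psi)$. I would argue as follows: the unique purely binding orbit set at index $2k$ has $m_\pm$ determined by the lattice-point correspondence in Proposition \ref{prop:indexevenintegers}, giving action of order $2\sqrt{2k\,ab/p'}$; comparing with the action bound $\sqrt{2k(L+\epsilon)}$ shows that when $4ab > Lp'$, i.e., asymptotically $2\V < y_+ + (-y_- + F)$, the purely binding choice violates the action bound and hence $\alpha_k$ must include a non-binding component. In the complementary range $\V \in [(y_+ + (-y_-+F))/2, \max\{y_+, -y_-+F\})$ a finer argument is required---perturbing the minimal-action cycle by a boundary supported on non-binding orbits, or exploiting the abundance of index-$2k$ generators in $ECC_*(\lambda_{\tilde\psi+N})$ beyond the ones seen in the model form $\lambda_{p'}$---and crucially uses that $a/b \to 1$ as $N \to \infty$, which is the heuristic flagged in the introduction to \S\ref{sec:proofmainthm}. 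Once $\alpha_k$ is nonempty, the bijection decomposes it as a disjoint union of simple periodic orbits $\gamma_{k,i}$ of $\psi$ with $\sum_i \A_N(\gamma_{k,i}) = \A(\alpha_k)$ and $\sum_i \ell(\gamma_{k,i}) = \alpha_k \cdot A_0$, so $R_k$ is a convex combination of the individual mean actions; therefore $\inf_{\gamma \in \P(\psi)} \A_N(\gamma)/\ell(\gamma) \leq R_k$, and combining with the upper bound above completes the proof.
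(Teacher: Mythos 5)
Your overall strategy agrees with the paper's: apply Proposition~\ref{prop:constructcontact} to the shifted pair $(\psi, y_+ + N)$, extract orbit sets $\alpha_k$ via Proposition~\ref{prop:pairineqscontact}, then bound the ratio $\A(\alpha_k)/(\alpha_k\cdot A_0)$ and translate back through the orbit bijection. You also correctly identify the central obstacle: $\alpha_k$ could be empty, in which case nothing is learned about $\P(\psi)$. However, the proof as written has a genuine gap precisely at this obstacle.

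Your nonemptiness argument compares the action of the unique purely binding generator $e_+^{m_+}e_-^{m_-}$ at index $2k$ with the spectral bound $\sqrt{2k(L+\epsilon)}$. As you yourself note, this comparison only yields a contradiction when $4ab > Lp'$, which asymptotically amounts to $2\V(\tilde\psi) < y_+ + (-y_-+F)$. In the complementary range $\V(\tilde\psi)\in\bigl[\tfrac{1}{2}(y_++(-y_-+F)),\max\{y_+,-y_-+F\}\bigr)$ you offer only a sketch (``perturbing the minimal-action cycle\dots exploiting the abundance of index-$2k$ generators\dots''), which does not constitute an argument. The paper handles nonemptiness uniformly under the full hypothesis~(\ref{eqn:Vmax}) by a different route: it shows directly that the lower bound~(\ref{eqn:lowerboundint}) on $\alpha_k\cdot A_0$ is strictly positive for $k$ large. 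The engine is the bound $m_{k,+}+m_{k,-}\leq C\sqrt{4kM}$ with $C<1$, coming from the choice~(\ref{eqn:epsilonchoice}), followed by the observation~(\ref{eqn:mM}) that after replacing $y_\pm$ by $y_\pm\pm N$ the inequality $2C^2(M+N)^2 < (m+N)(m+M+2N)$ holds for $N\gg 0$ because the $N^2$ coefficient on the left is $2C^2 < 2$. You never make explicit use of the hypothesis $\V(\tilde\psi)<\max\{y_+,-y_-+F\}$ at the crucial point, whereas in the paper it is exactly what produces the $C<1$ slack.

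Two further points. First, the intersection lower bound you quote, with coefficients $\beta_\pm=\rot(e_\pm)=\tfrac{1}{a}-\tfrac{1}{p'},\ \tfrac{1}{b}-\tfrac{1}{p'}$, follows the displayed statement of~(\ref{eqn:intersectionbalphab}), but the actual derivation via~(\ref{eqn:sumbound}) and~(\ref{eqn:sumfiltbalphab}) gives coefficients $\rot(e_\pm)+\tfrac{1}{p'}=\tfrac{1}{a},\tfrac{1}{b}$, which is what the paper in fact uses in~(\ref{eqn:lowerboundint}); your version is strictly stronger than what Proposition~\ref{prop:pairineqscontact} establishes. Second, the reduction of $R_k\leq\sigma+o_k(1)$ to the algebraic condition $m_{k,+}(1-\sigma\beta_+)+m_{k,-}(1-\sigma\beta_-)\geq 0$ is not tight: the $o_k(1)$ from Lemma~\ref{lem:Nkinequality} is multiplied by $Q\sim\sqrt{k}$ in the denominator, and since the denominator can be as small as $1$, the resulting error is not small. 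The paper avoids this by arguing that the fractional-linear function $(m_{k,+},m_{k,-})\mapsto\text{num}/\text{den}$ is maximized at the origin once the denominator is positive wherever the numerator is, which gives the clean bound~(\ref{eqn:bigsqrt}) directly.
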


\begin{proof} Let $\mcO(\lambda_{\tilde\psi})$ denote the set of orbit sets of the contact form $\lambda_{\tilde\psi}$ constructed by applying Proposition \ref{prop:constructcontact} to $\tilde\psi$ which do not include either binding component. Let $\mcO(\psi)$ denote the set of tuples $\{(\gamma_i,m_i)\}$ where the $\gamma_i$ are simple periodic orbits of $\psi$, the $m_i$ are positive integers, and if $\gamma_i$ is sent to $\alpha_i$ under the bijection from Proposition \ref{prop:constructcontact} then $\{(\alpha_i,m_i)\}\in\mcO(\lambda_{\tilde\psi})$. We extend the action, period, and mean action to $\mcO(\psi)$ in the obvious ways, e.g. $\A(\{(\gamma_i,m_i)\})=\sum_im_i\A(\gamma_i)$. In order to show (\ref{eqn:Ngoal}) it is enough to show the analogous inequality over $\mcO(\psi)$:
\begin{equation}\label{eqn:NOgoal}
\inf\left\{\frac{\A_N(\{(\gamma_i,m_i)\})}{\ell(\{(\gamma_i,m_i)\})}\;\middle|\;\{(\gamma_i,m_i)\}\in\mcO(\psi)\right\}\leq\sqrt{\hm(y_++N,-y_-+F+N)(\V(\tilde\psi)+N)}
\end{equation}
This is because
\[
\frac{m_i\A_N(\gamma_i)}{m_i\ell(\gamma_i)}=\frac{\A_N(\gamma_i)}{\ell(\gamma_i)}
\]
and because for any sequences $a_1,\dots,a_l$ and $\ell_1,\dots,\ell_l$ there must be some $i$ for which
\[
\frac{a_i}{\ell_i}\leq\frac{\sum_ia_i}{\sum_i\ell_i}
\]
which can be proved for $l=2$ by
\[
\frac{a_1+a_2}{\ell_1+\ell_2}<\frac{a_1}{\ell_1}\text{ and }\frac{a_2}{\ell_2}\Rightarrow a_2\ell_1<a_1\ell_2\text{ and }a_1\ell_2<a_2\ell_1
\]
and for $l>2$ by induction.

Choose $\epsilon>0$ so that
\begin{equation}\label{eqn:epsilonchoice}
\V(\tilde\psi)+\frac{\epsilon}{2}<\max\{y_+, -y_-+F\}
\end{equation}

We claim that Proposition \ref{prop:pairineqscontact} applies to $p=\tilde p$, $\lambda=\lambda_{\tilde\psi}$, $b_\pm=e_\pm$, and $\rot(b_+)=\frac{1}{y_+}-\frac{1}{\tilde p}, \rot(b_-)=\frac{1}{-y_-+F}-\frac{1}{\tilde p}$. This is because $\lambda_{\tilde\psi}$ is adapted to the open book decomposition $(B_{\tilde p},P_{\tilde p})$, which induces the same abstract open book $(A,D_{\tilde p})$ as does $(H_{\tilde p},\Pi_{\tilde p})$. Becuase $\lambda_{\tilde p}$ is adapted to $(H_{\tilde p},\Pi_{\tilde p})$, $\lambda_{\tilde\psi}$ and $\lambda_{\tilde p}$ are contactomorphic by Theorem \ref{thm:contactomorphic}.

Let $\alpha_k$ be the orbit set obtained by applying Proposition \ref{prop:pairineqscontact} to $\lambda_{\tilde\psi}$. By combining (\ref{eqn:intersectionbalphab}) and (\ref{eqn:Nkinequality}), we get the lower bound
\begin{equation}\label{eqn:lowerboundint}
\alpha\cdot A_0\geq\sqrt{\frac{2k\tilde p}{y_+(-y_-+F)}-c_1k^\frac{1}{2}+c_2}-\frac{m_{k,+}}{y_+}-\frac{m_{k,-}}{-y_-+F}
\end{equation}

We claim that the right hand side of (\ref{eqn:lowerboundint}) is positive when $k$ is large enough. Notice that positivity gives us $\alpha_k\neq\emptyset$, which was not guaranteed by Proposition \ref{prop:pairineqscontact}. Here is where the hypothesis (\ref{eqn:Vmax}) is crucial: the choice in \ref{eqn:epsilonchoice} is what allows us to assume $C<1$ in (\ref{eqn:C}) below.

The intuition behind the argument that $\alpha_k\neq\emptyset$ is the following. When we replace $y_+$ with $y_++N$, we also replace $-y_-+F$ with $-y_--N+F+2N=-y_-+F+N$. As $N$ gets large, the rotation numbers of the orbits $e_\pm$ for the contact form on $L(\tilde p+2N,\tilde p+2N-1)$ constructed as in Proposition \ref{prop:constructcontact} become very small, and so contribute less and less to the value of the  knot filtration on the orbit set identified in Proposition \ref{prop:pairineqscontact}. Lemma \ref{lem:Nkinequality} allows us to use (\ref{eqn:lowerboundint}) instead of (\ref{eqn:intersectionbalphab}), which is necessary to show that as $N$ increases, the lower bound on the value of the knot filtration on this orbit set does not go to zero as quickly as the contributions from the rotation numbers do.

The inequalities which prove this intuition are the following. Let $m=\min\{y_+,-y_-+F\}$ and $M=\max\{y_+,-y_-+F\}$. By applying (\ref{eqn:actionbalphab}) to $\lambda_{\tilde\psi}$, we get
\begin{equation}\label{eqn:actionpositive}
m_{k,+}+m_{k,-}\leq\sqrt{2k(\vol(L(\tilde p,\tilde p-1),\lambda_{\tilde\psi})+\epsilon)}-\A(\alpha_k)\leq\sqrt{2k(\vol(L(\tilde p,\tilde p-1),\lambda_{\tilde\psi})+\epsilon)}
\end{equation}
from which, by our choice of $\epsilon$ in (\ref{eqn:epsilonchoice}), we obtain
\begin{equation}\label{eqn:C}
m_{k,+}+m_{k,-}\leq C\sqrt{4kM}
\end{equation}
for some $C<1$. The right hand side of (\ref{eqn:lowerboundint}) is positive if
\begin{equation}\label{eqn:positive}
\frac{m_{k,+}}{y_+}+\frac{m_{k,-}}{-y_-+F}<\sqrt{\frac{2k\tilde p}{y_+(-y_-+F)}-c_1k^\frac{1}{2}+c_2}
\end{equation}
In order to show (\ref{eqn:positive}), we use (\ref{eqn:C}) to obtain the following upper bound on the left hand side of (\ref{eqn:positive}):
\begin{align*}
\frac{m_{k,+}}{y_+}+\frac{m_{k,-}}{-y_-+F}&\leq\frac{m_{k,+}+m_{k,-}}{m}
\\&\leq\frac{C\sqrt{4kM}}{m}
\end{align*}
Therefore, when $k$ is large enough, to show (\ref{eqn:positive}) it suffices to show that
\begin{align}
\frac{C\sqrt{4kM}}{m}&<\sqrt{\frac{2k\tilde p}{y_+(-y_-+F)}}\nonumber
\\\frac{2C^2M}{m^2}&<\frac{m+M}{mM} \nonumber
\\2C^2M^2&< m(m+M)\label{eqn:mM}
\end{align}
Now replace $y_+$ with $y_++N$, $-y_-+F$ with $-y_-+F+N$, $\tilde p$ with $\tilde p+2N$, and $\V(\tilde\psi)$ with $\V(\tilde\psi)+N$ for $N\in\Z$, which are the changes which occur when we replace $(\psi,y_+)$ with $(\psi,y_++N)$. (\ref{eqn:mM}) becomes
\[
2C^2(M^2+2MN+N^2)< mM+m^2+2mN+MN+mN+2N^2
\]
which holds when $N>>0$ because the $N^2$ term takes over, and $C<1$. These replacements are precisely the modifications which occur when we replace $(\psi,y_+)$ with $(\psi,y_++N)$.

Notice that the hypotheses of Propositions \ref{prop:constructcontact} and \ref{prop:pairineqscontact} hold for $(\psi,y_++N)$ if they hold for $(\psi,y_+)$, therefore we obtain an upper bound on the action and lower bound on the intersection number analogous to those we had for $(\psi,y_+)$. Note that we also now replace $\A$ with $\A_N$ in \ref{eqn:lowerboundint}, however we do not need to replace $\mcO(\lambda_{\tilde\psi})$ because the orbits of the contact form built from $(\psi,y_+)$ are the same as those built from $(\psi,y_++N)$.

Now that we know $\alpha_k\cdot A_0\geq0$, we can divide our upper bound on $\A_N(\alpha_k)$ by our lower bound on $\alpha_k\cdot A_0$ to obtain
\begin{equation}\label{eqn:firstdivision}
\frac{\A_N(\alpha_k)}{\alpha_k\cdot A_0}\leq\frac{\sqrt{2k(2(\V(\tilde\psi)+N)+\epsilon)}-m_{k,+}-m_{k,-}}{\sqrt{\frac{2k(\tilde p+2N)}{(y_++N)(-y_-+F+N)}-c_1k^\frac{1}{2}+c_2}-\frac{m_{k,+}}{y_++N}-\frac{m_{k,-}}{-y_-+F+N}}
\end{equation}

\textbf{Claim:} When extended to a function defined for $(m_+,m_-)\in\R_{\geq0}^2$, the right hand side of \ref{eqn:firstdivision} is maximized at $(0,0)$.

Consider a function of the form
\[
(x,y)\mapsto\frac{A-bx-cy}{D-ex-fy}
\]
where $A$, $b$, $c$, $D$, $e$, $f$ are constants in $x$ and $y$, and where both of the intercepts with the axes by the line where the numerator is zero occur at positive values of $x$ and $y$. It must have a maximum at $(0,0)$ if the denominator is positive whenever the numerator is positive and $x>0$, $y>0$. 

This is precisely what we have just shown when $N$ is large enough, by showing that (\ref{eqn:positive}) holds whenever (\ref{eqn:actionpositive}) holds. Therefore we can update (\ref{eqn:firstdivision}) by evaluating at $m_{k,\pm}=0$ to maximize its right hand side:
\[
\frac{\A_N(\alpha_k)}{\alpha_k\cdot A_0}\leq\frac{\sqrt{2k(2(\V(\tilde\psi)+N)+\epsilon)}}{\sqrt{\frac{2k(\tilde p+2N)}{(y_++N)(-y_-+F+N)}-c_1k^\frac{1}{2}+c_2}}
\]
By sending $k\to\infty$, we obtain
\begin{align}
\inf_k\left\{\frac{\A_N(\alpha_k)}{\alpha_k\cdot A_0}\right\}&\leq\frac{\sqrt{2k(2(\V(\tilde\psi)+N)+\epsilon)}}{\sqrt{\frac{2k(\tilde p+2N)}{(y_++N)(-y_-+F+N)}}} \nonumber
\\\inf\left\{\frac{\A_N(\alpha)}{\alpha\cdot A_0}\;\middle|\;\alpha\in\mcO(\lambda_{\psi})-\{e_\pm\}\right\}&\leq\frac{\sqrt{2(\V(\tilde\psi)+N)+\epsilon}}{\sqrt{\frac{\tilde p+2N}{(y_++N)(-y_-+F+N)}}} \label{eqn:bigsqrt}
\end{align}
Notice that the quantity inside the square root in the denominator of the right hand side of (\ref{eqn:bigsqrt}) simplifies to
\begin{align*}
\frac{\tilde p+2N}{(y_++N)(-y_-+F+N)}&=\frac{(y_++N)+(-y_-+F+N)}{(y_++N)(-y_-+F+N)}
\\&=\frac{1}{-y_-+F+N}+\frac{1}{y_++N}
\end{align*}
which, combined with (\ref{eqn:bigsqrt}) and sending $\epsilon\to0$, proves the inequality
\[
\inf\left\{\frac{\A_N(\alpha)}{\alpha\cdot A_0}\;\middle|\;\alpha\in\mcO(\lambda_{\psi})-\{e_\pm\}\right\}\leq\sqrt{\hm(y_++N,-y_-+F+N)(\V(\tilde\psi)+N)}
\]
Applying the bijection from Proposition (\ref{prop:constructcontact}) gives the desired inequality (\ref{eqn:NOgoal}), which gives us (\ref{eqn:Ngoal}) as discussed at the beginning of the proof.
\end{proof}

Before we prove Theorem \ref{thm:main}, we first prove two lemmas, which will allow us to prove our theorem for $\psi$ which do not satisfy all of the assumptions of Proposition \ref{prop:constructcontact}. The first will allow us to remove the assumption that the action function is positive. The second will allow us to replace the requirements that $y_+-y_-\in\Z$ and $F\in\Z$ with $y_+-y_-\in\Q$ and $F\in\Q$, which we will further weaken later in the proof.

\begin{lem}\label{lem:Nshift} If (\ref{eqn:goal}) holds for $(\psi,y_++N)$, then (\ref{eqn:goal}) holds for $(\psi,y_+)$.
\end{lem}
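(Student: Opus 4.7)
The plan is to observe that passing from $(\psi,y_+)$ to $(\psi,y_++N)$ shifts the action function, the Calabi invariant, and the total action of every periodic orbit by a simple known amount, so that the inequality (\ref{eqn:goal}) for $(\psi,y_+)$ follows from the one for $(\psi,y_++N)$ by subtracting $N$ from both sides.

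First I would establish the identity $f_{(\psi,y_++N,\beta)}=f_{(\psi,y_+,\beta)}+N$. Both functions satisfy the same defining equation $df=\psi^*\beta-\beta$ from (\ref{eqn:df}), because the right-hand side does not depend on the normalization of the lift; hence their difference is a constant on $A$, and the boundary condition (\ref{eqn:fplus}) forces that constant to equal $(y_++N)-y_+=N$. This immediately yields $\V(\psi,y_++N)=\V(\psi,y_+)+N$ (which is also the homomorphism property $\V(\psi,y_++1)=\V(\psi,y_+)+1$ recorded after Lemma \ref{lem:calabiindep}, iterated $N$ times), and for every periodic orbit $\gamma$ of $\psi$,
\[
\A_N(\gamma)=\sum_{i=1}^{\ell(\gamma)}f_{(\psi,y_++N,\beta)}(\gamma_i)=\A(\gamma)+N\ell(\gamma),
\]
so the mean action transforms as
\[
\frac{\A_N(\gamma)}{\ell(\gamma)}=\frac{\A(\gamma)}{\ell(\gamma)}+N.
\]

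Next I would note that the set $\P(\psi)$ of simple periodic orbits depends only on the map $\psi$ and not on the choice of lift $\tilde\psi$, so taking the infimum over $\P(\psi)$ commutes with adding the constant $N$. The hypothesis that (\ref{eqn:goal}) holds for $(\psi,y_++N)$ then reads
\[
\inf_{\gamma\in\P(\psi)}\frac{\A(\gamma)}{\ell(\gamma)}+N\leq\V(\psi,y_+)+N,
\]
and subtracting $N$ from both sides gives (\ref{eqn:goal}) for $(\psi,y_+)$. There is no real obstacle to this lemma: it is a bookkeeping statement about normalizations, and the only care required is to verify the shift identity $f_{(\psi,y_++N,\beta)}=f_{(\psi,y_+,\beta)}+N$, which is immediate from Definition \ref{defn:af}.
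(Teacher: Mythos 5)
Your proof is correct and follows the paper's argument exactly: both rest on the identity $f_{(\psi,y_++N,\beta)}=f_{(\psi,y_+,\beta)}+N$ and then shift the infimum and Calabi invariant by $N$. You simply spell out the justification for the shift identity (same differential, boundary values differ by $N$), which the paper takes for granted.
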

\begin{proof} Because $f_{(\psi,y_++N,\beta)}=f_{(\psi,y_+,\beta)}+N$, we have
\begin{equation}\label{eqn:shiftgoal}
\inf\left\{\frac{\A(\gamma)}{\ell(\gamma)}\;\middle|\;\gamma\in\P(\psi)\right\}+N=\inf\left\{\frac{\A_N(\gamma)}{\ell(\gamma)}\;\middle|\;\gamma\in\P(\psi)\right\}\leq\V_N(\tilde\psi)=\V(\tilde\psi)+N
\end{equation}
where $\A_N$ and $\V_N$ denote the total action and Calabi invariant, respectively, computed using $f_{(\psi,y_++N,\beta)}$.
\end{proof}

\begin{lem}\label{lem:rational} Let $\psi$ be an area-preserving diffeomorphism of $(A,\omega)$ which rotates by $2\pi y_\pm$ near $\partial_\pm A$. Then if $\gamma$ is a periodic orbit of $\psi^q$ which is covered by a periodic orbit $\gamma'$ of $\psi$, then
\begin{equation}\label{eqn:meanactionq}
\frac{\A(\gamma)}{\ell(\gamma)}=q\frac{\A(\gamma')}{\ell(\gamma')}
\end{equation}
and
\begin{equation}\label{eqn:calabiq}
\V(\tilde\psi^q)=q\V(\tilde\psi)
\end{equation}
\end{lem}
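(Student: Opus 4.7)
The plan is to first compute the action function of $\psi^q$ explicitly in terms of that of $\psi$, then read off both equalities as consequences. The key identity I will prove is
\[
f_{(\psi^q,\,qy_+,\,\beta)} = \sum_{i=0}^{q-1} f\circ\psi^i,
\]
where $f = f_{(\psi,y_+,\beta)}$. To verify this I will use the telescoping computation
\[
(\psi^q)^*\beta - \beta \;=\; \sum_{i=0}^{q-1}(\psi^i)^*(\psi^*\beta-\beta) \;=\; \sum_{i=0}^{q-1}(\psi^i)^*df \;=\; d\!\left(\sum_{i=0}^{q-1} f\circ\psi^i\right),
\]
which shows the proposed function satisfies (\ref{eqn:df}) for $\psi^q$. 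For the boundary condition (\ref{eqn:fplus}), I will use that $f\equiv y_+$ on $\partial_+A$ and that $\psi$ preserves $\partial_+A$ setwise, so each summand equals $y_+$ on $\partial_+A$ and the sum equals $qy_+$. By uniqueness of the action function, this proves the identity.

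From this, (\ref{eqn:calabiq}) follows immediately by integrating over $A$, using that $\psi$ is area-preserving so $\int_A (f\circ\psi^i)\,\omega = \int_A f\,\omega$ for each $i$:
\[
\int_A f_{(\psi^q,\,qy_+,\,\beta)}\,\omega \;=\; \sum_{i=0}^{q-1}\int_A (f\circ\psi^i)\,\omega \;=\; q\int_A f\,\omega,
\]
and dividing by $\int_A \omega$ gives $\V(\tilde\psi^q)=q\V(\tilde\psi)$.

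For (\ref{eqn:meanactionq}), write $l=\ell(\gamma)$ and $\ell'=\ell(\gamma')$, and pick a basepoint $z$ of $\gamma$ lying on $\gamma'$. The meaning of ``$\gamma'$ covers $\gamma$'' is that the $\psi^q$-iterates of $z$ trace out $\gamma$, so the points $\psi^k(z)$ for $0\le k\le ql-1$ wind around $\gamma'$ exactly $ql/\ell'$ times. Combining this with the identity above gives
\[
\A(\gamma) \;=\; \sum_{i=0}^{l-1} f_{(\psi^q,\,qy_+,\,\beta)}\!\bigl(\psi^{qi}(z)\bigr) \;=\; \sum_{k=0}^{ql-1} f\bigl(\psi^k(z)\bigr) \;=\; \frac{ql}{\ell'}\,\A(\gamma'),
\]
and dividing by $l$ produces $\A(\gamma)/\ell(\gamma) = q\,\A(\gamma')/\ell(\gamma')$.

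The only point that requires any care is the combinatorial bookkeeping of how a single $\psi$-orbit decomposes into $\gcd(\ell',q)$ orbits of $\psi^q$, but fixing a basepoint $z\in\gamma\cap\gamma'$ and counting preimages via the exponents $\{qi+j : 0\le i\le l-1,\, 0\le j\le q-1\} = \{0,1,\dots,ql-1\}$ handles this cleanly; no hard analytic input is needed once the formula for $f_{(\psi^q,qy_+,\beta)}$ is established.
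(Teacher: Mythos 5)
Your proof is correct and follows the same basic strategy as the paper's: both arguments hinge on the telescoping identity relating the action form of $\psi^q$ to that of $\psi$, and then integrate/sum to get the two equalities. Where the paper works implicitly with the defining properties of $f_q$ (it introduces auxiliary functions $f_{q,i}$ with $df_{q,i}=(\psi^i)^*\beta-(\psi^{i-1})^*\beta$ and then invokes Lemma~\ref{lem:calabiindep}), you make the organizing identity $f_{(\psi^q,qy_+,\beta)}=\sum_{i=0}^{q-1}f\circ\psi^i$ explicit, which streamlines both the Calabi computation (change of variables under the area-preserving $\psi^i$) and the mean-action computation. One point where your write-up is actually cleaner than the paper's: the paper finishes by asserting ``$\ell(\gamma')=q\ell(\gamma)$,'' which, under the paper's own definition of $\ell$ as the cardinality of the underlying set, holds only when $q\mid\ell(\gamma')$ (i.e.\ when the $ql$-tuple $(\gamma_{1,1},\dots,\gamma_{l,q})$ has all distinct entries). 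Your formulation $\A(\gamma)=\sum_{k=0}^{ql-1}f(\psi^k(z))=\tfrac{ql}{\ell'}\A(\gamma')$ followed by division by $l$ handles the general case $\gcd(\ell',q)>1$ without that implicit assumption, and your brief remark about the exponent bookkeeping $\{qi+j\}=\{0,\dots,ql-1\}$ is exactly the right thing to flag.
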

\begin{proof} Note $\tilde\psi^q$ is rotation by $qy_+$ near $x=1$. Let $f_q$ denote $f_{(\psi^q,qy_+,\beta)}$. Let $\gamma=(\gamma_1,\dots,\gamma_l)$ be a periodic orbit of $\psi^q$, and let $\gamma'=(\gamma_{1,1},\dots,\gamma_{1,q},\dots,\gamma_{l,1},\dots,\gamma_{l,q})$ be a lift to a periodic orbit of $\psi$, where $\gamma_{i,q}=\gamma_i$. Let $\eta$ be a path from $\gamma_1$ to $\partial_+A$. The total action of $\gamma$ is
\begin{align*}
\sum_{i=1}^lf_q(\gamma_i)&=\sum_{i=1}^l\left(qy_++\int_{\psi^{q(i-1)}\eta} df_q\right)
\\&=qly_++\sum_{i=1}^l\left(\int_{\psi^{q(i-1)}\eta}\left({\psi^q}^*\beta-\beta\right)\right)
\\&=qly_++\int_{\psi^{ql}\eta}\beta-\int_{\psi^{ql-1}\eta}\beta+\cdots+\int_{\psi\eta}\beta-\int_\eta\beta
\\&=\A(\gamma')
\end{align*}

We obtain (\ref{eqn:meanactionq}) by combining $\A(\gamma)=\A(\gamma')$ with $\ell(\gamma')=q\ell(\gamma)$. To show (\ref{eqn:calabiq}), notice
\[
df_q={\psi^q}^*\beta-\beta={\psi^q}^*\beta-{\psi^{q-1}}^*\beta+\cdots+\psi^*\beta-\beta
\]
Define $f_{q,i}$ by $df_{q,i}={\psi^{i}}^*\beta-{\psi^{i-1}}^*\beta$ and $f_{q,i}|_{\partial_+A}=y_+$. By Lemma \ref{lem:calabiindep}, we have $\int_Af_{q,i}\omega=2\V(\tilde\psi)$ for all $i$, which proves (\ref{eqn:calabiq}).

\end{proof}

\begin{rmrk} If $\tilde\psi$ is an area-preserving diffeomorphism of $(A,\omega)$ for which $y_+-y_-\in\Q$, $F\in\Q$, then there is some $q\in\Z$ for which $qy_+-qy_-\in\Z$, $qF\in\Z$. If the other hypotheses of Proposition \ref{prop:penultimate} apply to $\tilde\psi^q$, then we get
\[
q\inf\left\{\frac{\A(\gamma)}{\ell(\gamma)}\;\middle|\;\gamma\in\P(\psi)\right\}\leq\inf\left\{\frac{\A_q(\gamma)}{\ell_q(\gamma)}\;\middle|\;\gamma\in\P(\psi^q)\right\}\leq\V(\tilde\psi^q)=q\V(\tilde\psi)
\]
and dividing by $q$ gives us (\ref{eqn:goal}) for $\tilde\psi$. This is how we will obtain (\ref{eqn:goal}) for a broader class of maps than those which satisfy the hypotheses of Proposition \ref{prop:penultimate}.
\end{rmrk}

Finally we prove Theorem \ref{thm:main}. The proof is split into seven cases, each of which requires its own delicate analysis. There are two reasons for this, corresponding to the two tasks we have to accomplish: apply Proposition \ref{prop:penultimate} to some power $\psi^q$ of $\psi$ if $\psi^q$ satisfies its hypotheses, and then improve (\ref{eqn:Ngoal}) to (\ref{eqn:goal}) for $(\psi^q,qy_++N)$. (From (\ref{eqn:goal}) for $(\psi^q,qy_++N)$ we obtain (\ref{eqn:goal}) for $(\psi,y_+)$ by applying Lemmas \ref{lem:Nshift} and \ref{lem:rational}.) We accomplish these tasks by perturbing our maps $\psi$ to a sequence of maps for which the inequality (\ref{eqn:Ngoal}) for the perturbed map implies (\ref{eqn:goal}) for $(\psi^q,qy_++N)$, if (\ref{eqn:Ngoal}) is not already stronger. Throughout we have to take care that the orbits picked out by the inequality (\ref{eqn:Ngoal}) for the perturbed map actually correspond to orbits of the original rather than the perturbed map.

When one of the boundary rotation numbers of $\psi$ is rational, we cannot apply Proposition \ref{prop:penultimate} to any power of $\psi$. To address this we show that the rational case (1a) can be reduced to the irrational case (2a). Hutchings had a similar issue in \cite{mean}.

When both boundary rotation numbers are irrational, we focus on bootstrapping (\ref{eqn:Ngoal}) to (\ref{eqn:goal}) for $(\psi^q,qy_++N)$ when (\ref{eqn:Ngoal}) is weaker, as in all cases which fall under (2a). We are able to choose perturbations which both change $\psi$ to a form to which we can apply Proposition \ref{prop:penultimate} and which allow us to bootstrap. Case (2a) is more complicated than the analogous (irrational) part of Hutchings' proof because we have two boundary components rather than one. We have to use different perturbations of $\psi$ based on the relationship between the Calabi invariant and the minimum of $y_+$ and $-y_-+F$, leading to the four sub-cases.

In cases (1b) and (2b) (\ref{eqn:Ngoal}) is stronger than (\ref{eqn:goal}) for $(\psi^q,qy_++N)$, and we do not have to bootstrap. This is also a new phenomenon. We can have
\[
\min\{y_+,-y_-+F\}<hm(y_+,-y_-+F)\leq\V(\tilde\psi)<\max\{y_+,-y_-+F\}
\]
in which case (\ref{eqn:Ngoal}) is stronger than (\ref{eqn:goal}) for $(\psi^q,qy_++N)$. This cannot happen in the case of the disk because it has only one boundary component.

\begin{proof}[Proof of Theorem \ref{thm:main}] We will prove the theorem in cases. Let $m=\min\{y_+,-y_-+F\}$ and $M=\max\{y_+,-y_-+F\}$ as in the proof of Proposition \ref{prop:penultimate}. Let 
\[
y_m=\begin{cases}y_+&\text{ if }y_+=m\\y_-&\text{ if }-y_-+F=m\end{cases} \text{ and }y_M=\begin{cases}y_+&\text{ if }y_+=M\\y_-&\text{ if }-y_-+F=M\end{cases}
\]

There are seven cases we must consider:
\begin{enumerate}
\item $y_m\in\Q$
\begin{enumerate}
\item $\V(\tilde\psi)<m$
\item $m\leq\V(\tilde\psi)$
\end{enumerate}
\item $y_m\in\R-\Q$
\begin{enumerate}
\item $\V(\tilde\psi)<\hm(y_+,-y_-+F)$
\begin{enumerate}
\item $m=M$
\item $\V(\tilde\psi)<m\neq M$
\item $y_m=-y_-$ and $-y_-+F\leq\V(\tilde\psi)$
\item $y_m=y_+\leq\V(\tilde\psi)$
\end{enumerate}
\item $\hm(y_+,-y_-+F)\leq\V(\tilde\psi)$
\end{enumerate}
\end{enumerate}

\subsubsection*{Case (1a), $y_m\in\Q$, $\V(\tilde\psi)<m$}

In this case we cannot apply Proposition \ref{prop:penultimate} to any power of $\psi$ directly. Therefore we will reduce to case (2a).

Choose $\epsilon>0$ so that $\V(\tilde\psi)<m-\epsilon$ and so that $y_m-\epsilon\in\R-\Q$. Choose $D$ so that
\begin{equation}\label{eqn:MDm}
m\leq M-D<m+1
\end{equation}
Choose $\delta,\delta'$ so that when $-1\leq x\leq-1+\delta'$, $\psi$ is rotation by $y_-$, and when $1-\delta\leq x\leq1$, $\psi$ is rotation by $y_+$.

If $y_+=y_M$, let $b_\epsilon:[-1,1]\to[-D-\epsilon,\epsilon]$ be a smooth nonincreasing function which is identically $\epsilon$ near $-1$, identically zero from slightly before $-1+\delta'$ until slightly after $1-\delta$, identically $-D-\epsilon$ near $1$, and for which $\int_{-1}^1b(x)\,dx=0$.

If $y_-=y_M$, let $b_\epsilon:[-1,1]\to[-\epsilon,D+\epsilon]$ be a smooth nonincreasing function which is identically $D+\epsilon$ near $-1$, identically zero from slightly before $-1+\delta'$ until slightly after $1-\delta$, identically $-\epsilon$ near $1$, and for which $\int_{-1}^1b(x)\,dx=0$.

Let $\tau_\epsilon$ be the area-preserving diffeomorphism of $(A,\omega)$ given by
\[
\tau_\epsilon(x,y)=(x,y+2\pi b_\epsilon(x))
\]
and let $\hat\psi$ be the area-preserving diffeomorphism $\tau_\epsilon\circ\psi$.

We claim that $\hat\psi$ now falls under Case (2a), and that (\ref{eqn:goal}) for $\hat\psi$ proves (\ref{eqn:goal}) for $\psi$. Specifically, we need to show that $\hat y_m$ is irrational and that $\V(\tilde{\hat\psi})$ is less than the harmonic mean of $\hat M=M-D-\epsilon$ and $\hat m=m-\epsilon$. We also need to show that the infimum of mean action over the orbits of $\hat\psi$ must be attained by an orbit which is also an orbit for $\psi$.

We chose $\epsilon$ to make $\hat y_m=y_m-\epsilon$ irrational.

To put a bound on the Calabi invariant and to show that the infimum is attained by a shared orbit, we need to compute the average of the action function $\hat f=f(\hat\psi,y_+-\epsilon,\beta)$ and its values on the regions $-1\leq x\leq-1+\delta', 1-\delta\leq x\leq1$ where $\hat\psi$ differs from $\psi$.

First we compute $\hat f$ when $1-\delta\leq x\leq 1$. Because $\psi^*\beta=\beta$ when $x$ is greater than $1-\delta$, we know that $d\hat f=\tau_\epsilon^*\beta-\beta$ in that region. Therefore,
\begin{align*}
\hat f(x,y)&=\int_1^xtb_\epsilon'(t)\,dt+\hat f(1,y)
\\&=xb_\epsilon(x)-(-\epsilon)-\int_1^xb_\epsilon(t)\,dt+y_+-\epsilon
\\&=y_++xb_\epsilon(x)+\int_x^1b_\epsilon(t)\,dt
\end{align*}
Because $b_\epsilon$ is nonincreasing, $\partial_x\hat f=xb_\epsilon'(x)$ is nonpositive. Therefore $\hat f$ achieves its minimum on $1-\delta\leq x\leq1$ when $x=1$. This minimum is $y_+-D-\epsilon$ if $y_+=y_M$ and $y_+-\epsilon$ if $y_+=y_m$.

Next we compute $\hat f$ when $-1+\delta'\leq x\leq 1-\delta$. Let $\eta_{x_1,x_2,y}$ denote the path from $(x_1,y)$ to $(x_2,y)$ parameterized by $t\mapsto(t,y)$. Because $d\tau_\epsilon=0$ between $-1+\delta'$ and $1-\delta$, we have $d\hat f=df$ in that region. Therefore,
\begin{align*}
\hat f(x,y)&=\int_{\eta_{1-\delta,x,y}}df+\hat f(1-\delta,y)
\\&=f(x,y)-f(1-\delta,y)+y_++(1-\delta)b_\epsilon(1-\delta)+\int_{1-\delta}^1b_\epsilon(t)\,dt
\\&=f(x,y)-y_++y_++(1-\delta)(0)+\int_{1-\delta}^1b_\epsilon(t)\,dt
\\&=f(x,y)+\int_{1-\delta}^1b_\epsilon(t)\,dt
\end{align*}
Notice that orbits of $\hat\psi$ in the region $-1+\delta'\leq x\leq1-\delta$ therefore have mean action within $\delta(D+\epsilon)$ of the mean action of the corresponding orbits of $\psi$.

Finally we compute $\hat f$ when $-1\leq x\leq-1+\delta'$. As when $x$ is greater than $1-\delta$, $b_\epsilon$ determines $d\hat f$:
\begin{align*}
\hat f(x,y)&=\int_{-1+\delta'}^xtb_\epsilon'(t)\,dt+\hat f(-1+\delta,y)
\\&=xb_\epsilon(x)-(-1+\delta')b_\epsilon(-1+\delta')-\int_{-1+\delta'}^xb_\epsilon(t)\,dt+f(-1+\delta,y)+\int_{1-\delta}^1b_\epsilon(t)\,dt
\\&=xb_\epsilon(x)-(-1+\delta')(0)+\int_x^1b_\epsilon(t)\,dt-y_-+F
\\&=-y_-+F+xb_\epsilon(x)+\int_x^1b_\epsilon(t)\,dt
\end{align*}
Because $b_\epsilon$ is nonincreasing, $\partial_x\hat f=xb_\epsilon'(x)$ is nonnegative. Therefore $\hat f$ achieves its minimum on $-1\leq x\leq-1+\delta'$ when $x=-1$. This minimum is $-y_-+F-D-\epsilon$ if $y_-=y_M$ and $-y_-+F-\epsilon$ if $y_-=y_m$.

We can put the following upper bound on $\V(\tilde{\hat\psi})$:
\begin{align*}
\V(\tilde{\hat\psi})&=\frac{1}{2}\int_A\hat f\omega
\\&=\frac{1}{2}\left(\int_{-1}^{-1+\delta'}\left(-y_-+F+xb_\epsilon(x)+\int_x^1b_\epsilon(t)\,dt\right)\,dx+\int_{-1+\delta'}^{1-\delta}\left(f+\int_{1-\delta}^1b_\epsilon(t)\,dt\right)\,dx\right.
\\&\;\;\;\;\left.+\int_{1-\delta}^1\left(y_++xb_\epsilon(x)+\int_x^1b_\epsilon(t)\,dt\right)\,dx\right)
\\&=\frac{1}{2}\left(\delta'(-y_-+F)+\int_{-1}^{-1+\delta'}xb_\epsilon(x)\,dx+\int_{-1}^{-1+\delta'}\int_x^1b_\epsilon(t)\,dt\,dx+2\V(\tilde\psi)\right.
\\&\;\;\;\;\left.-\delta'(-y_-+F)-\delta(y_+)+(2-\delta-\delta')\int_{1-\delta}^1b_\epsilon(t)\,dt+\delta(y_+)+\int_{1-\delta}^1xb_\epsilon(x)\,dx\right.
\\&\;\;\;\;\left.+\int_{1-\delta}^1\int_x^1b_\epsilon(t)\,dt\,dx\right)
\\&=\frac{1}{2}\left(2\V(\tilde\psi)+\int_{-1}^1xb_\epsilon(x)\,dx+\int_{-1}^{-1+\delta'}\int_x^1b_\epsilon(t)\,dt\,dx+\int_{1-\delta}^1\int_x^1b_\epsilon(t)\,dt\,dx\right.
\\&\;\;\;\;\left.+(2-\delta-\delta')\int_{1-\delta}^1b_\epsilon(t)\,dt\right)
\\&\leq\V(\tilde\psi)+\frac{1}{2}\left(\delta'(0)+\delta(0)+(2-\delta-\delta')(0)\right)
\\&\leq\V(\tilde\psi)
\end{align*}

The harmonic mean of $\hat M$ and $\hat m$ is greater than $\V(\tilde{\hat\psi})$, because both $\hat M$ and $\hat m$ are.

Finally, the orbits of $\hat\psi$ and $\psi$ can be split into those lying in the range $-1+\delta'\leq x\leq1-\delta$, where the orbits of $\hat\psi$ are in bijection with the orbits of $\psi$, and those for which $-1\leq x\leq-1+\delta'$ or $1-\delta\leq x\leq1$, in which case the two diffeomorphisms have different orbits. We don't want the bound obtained by proving that (\ref{eqn:goal}) holds for $\hat\psi$ in Case (2a) to be identifying an orbit of $\hat\psi$ which is not also shared by $\psi$.

However, this cannot happen. These new orbits have action greater than $m-\epsilon$ and $M-D-\epsilon$, respectively. (\ref{eqn:goal}) implies there exists an orbit with mean action less than or equal to
\begin{equation}\label{eqn:Vlessm}
\V(\tilde{\hat\psi})\leq\V(\tilde\psi)
\end{equation}
The right hand side of (\ref{eqn:Vlessm}) is less than or equal to $M-D-\epsilon$ and $m-\epsilon$ because of our choices of $D$ in (\ref{eqn:MDm}) and $\epsilon$.

Now we know that $\hat\psi$ falls under Case (2a). It remains to show that (\ref{eqn:goal}) for $\hat\psi$ proves (\ref{eqn:goal}) for $\psi$. The mean action of orbits of $\hat\psi$ at which the infimum is attained is within $\delta(D+\epsilon)$ of the mean action of the corresponding orbits of $\psi$. Therefore by sending $\delta\to0$, (\ref{eqn:goal}) for $\hat\psi$ using Case (2a) suffices to prove (\ref{eqn:goal}) for $\psi$.

\subsubsection*{Case (1b) $y_m\in\Q$, $m\leq\V(\tilde\psi)$}

Every $(x,y)\in A$ close enough to the boundary corresponding to the minimum of $y_+$ and $-y_-+F$ is part of a periodic orbit of $\psi$ with mean action $m$. Because $m\leq\V(\tilde\psi)$, these periodic orbits prove (\ref{eqn:goal}). Compare to Remark \ref{rmrk:rationalboundary}.

\subsubsection*{Case (2a)(i) $y_m\in\R-\Q$, $\V(\tilde\psi)<m=M$}

Assume $m=M$ (we will use $M$ to denote both). Choose $\epsilon>0$ so that $\V(\tilde\psi)+\epsilon<M$ and $\V(\tilde\psi)+\epsilon\in\R-\Q$.

Choose $\delta>0$ so that when $x$ is within distance $\delta$ of either of $\pm1$, $\psi$ is rotation by $y_\pm$. Let $b_i:[-1,1]\to[-M+\V(\tilde\psi)+\epsilon,M-\V(\tilde\psi)-\epsilon]$ be a smooth nonincreasing function which is identically $M-\V(\tilde\psi)-\epsilon$ near $-1$, identically zero from before $-1+\delta$ to after $1-\delta$, identically $-M+\V(\tilde\psi)+\epsilon$ near $1$, and for which $B_i=\int_{-1}^1b_i(x)\,dx$ satisfies $F+B_i\in\Q$. Notice that
\begin{equation}\label{eqn:lbbi}
\delta(-M+\V(\tilde\psi)+\epsilon)\leq B_i\leq\delta(M-\V(\tilde\psi)-\epsilon)
\end{equation}

Let $\tau_i$ be the area-preserving diffeomorphism of $(A,\omega)$ given by
\[
\tau_i(x,y)=(x,y+2\pi b_i(x))
\]
and let $\psi_i=\tau_i\circ\psi$.

Notice that the flux of $\psi_i$ applied to the $(x,0)$ curve is $F+B_i$. There is some $q$ for which $q(F+B_i)\in\Z$. We will apply (\ref{eqn:Ngoal}) to $\psi_i^q$ and use it to obtain (\ref{eqn:goal}) for $\psi^q$ by sending $\epsilon\to0$. In order for (\ref{eqn:Ngoal}) applied to $\psi_i^q$ to say anything about the orbits of $\psi$, we need to know that the orbits of $\psi_i^q$ which are not in bijection with the orbits of $\psi^q$ have mean action so great as to not be picked up by the upper bound on the infimum of the mean action of orbits of $\psi_i^q$.

First we compute $f_i=f(\psi_i,M-\epsilon,\beta)$. When $1-\delta\leq x\leq1$, we have $df_i=\tau_i^*\beta-\beta$, therefore
\begin{align*}
f_i(x,y)&=\int_1^xtb'_i(t)\,dt+f_i(1,y)
\\&=xb_i(x)-(-M+\V(\tilde\psi)+\epsilon)-\int_1^xb_i(t)\,dt+M-M+\V(\tilde\psi)+\epsilon
\\&=M+xb_i(x)+\int_x^1b_i(t)\,dt
\end{align*}
Notice that because $b_i'$ is nonpositive, $f_i$ achieves its minimum on $1-\delta\leq x\leq1$ when $x=1$. Therefore in this range, $f_i(x,y)\geq \V(\tilde\psi)+\epsilon$.

Next we compute $f_i$ when $-1+\delta\leq x\leq 1-\delta$. In this region, $df_i=df$, therefore
\begin{align*}
f_i(x,y)&=\int_{\eta_{1-\delta,x,y}}df+f_i(1-\delta,y)
\\&=f(x,y)-M+M+(1-\delta)(0)+\int_{1-\delta}^1b_i(t)\,dt
\\&=f(x,y)+\int_{1-\delta}^1b_i(t)\,dt
\end{align*}
Notice that in this range, the mean action of an orbit of $\psi_i$ is within $\delta(M-\V(\tilde\psi)-\epsilon)$ of the mean action of the corresponding orbit of $\psi$.

Finally we compute $f_i$ when $-1\leq x\leq -1+\delta$. In this region, $df_i=\tau_i^*\beta-\beta$, therefore
\begin{align*}
f_i(x,y)&=\int_{-1+\delta}^xtb_i'(t)\,dt+f_i(-1+\delta,y)
\\&=xb_i(x)-(-1+\delta)(0)-\int_{-1+\delta}^xb_i(t)\,dt+M+\int_{1-\delta}^1b_i(t)\,dt
\\&=M+xb_i(x)+\int_x^1b_i(t)\,dt
\end{align*}
Notice that because $b_i'$ is nonpositive and $x$ is negative in this range, $f_i$ achieves its minimum on $-1\leq x\leq-1+\delta$ when $x=-1$. Therefore in this range, $f_i(x,y)\geq\V(\tilde\psi)+\epsilon+B_i$.

Next we obtain an upper bound for $\V(\tilde\psi_i)$.
\begin{align*}
\V(\tilde\psi_i)&=\frac{1}{2}\int_Af_i\omega
\\&=\frac{1}{2}\left(\int_{-1}^{-1+\delta}\left(M+xb_i(x)+\int_x^1b_i(t)\,dt\right)\,dx+\int_{-1+\delta}^{1-\delta}\left(f+\int_{1-\delta}^1b_i(t)\,dt\right)\,dx\right.
\\&\;\;\;\;\left.+\int_{1-\delta}^1\left(M+xb_i(x)+\int_x^1b_i(t)\,dt\right)\,dx\right)
\\&=\frac{1}{2}\left(\delta M+0+\delta(M-\V(\tilde\psi)-\epsilon)+2\V(\tilde\psi)-2\delta M+(2-2\delta)(0)+\delta M+0+\delta(0)\right)
\\&=\V(\tilde\psi)+\delta(M-\V(\tilde\psi)-\epsilon)
\end{align*}

For conciseness, let $\V_\epsilon$ denote $\V(\tilde\psi)+\epsilon$ throughout the remainder of this case. Applying (\ref{eqn:Ngoal}) to $\psi_i^q$ gives us
\begin{align}
\inf&\left\{\frac{\A_N(\gamma)}{\ell(\gamma)}\;\middle|\;\gamma\in\P(\psi_i^q)\right\}\leq\sqrt{\hm(q\V_\epsilon+N,q(\V_\epsilon+B_i)+N)(q\V(\tilde\psi_i)+N)} \nonumber
\\&\leq\sqrt{\hm(q\V_\epsilon+N,q(\V_\epsilon+\delta(M-\V_\epsilon))+N)\left(q\left(\V(\tilde\psi)+\delta(M-\V_\epsilon)\right)+N\right)} \label{eqn:2ai}
\end{align}

We claim that the upper bound in (\ref{eqn:2ai}) is lower than the minimum mean action of all orbits which aren't in bijection with orbits of $(\psi^q,qy_++N)$. By combining the lower bound (\ref{eqn:lbbi}) with our lower bounds on $f_i$ near the boundary, we know that the mean action of any of these new orbits is at least $q\left(\V_\epsilon-\delta(M-\V_\epsilon)\right)+N$. Meanwhile, the right hand side of (\ref{eqn:2ai}) is at most the geometric mean of
\[
q\left(\V_\epsilon+\delta(M-\V_\epsilon)\right)+N=\max\{q\V_\epsilon+N,q\left(\V_\epsilon+\delta(M-\V_\epsilon)\right)+N\}
\]
and $q\left(\V(\tilde\psi)+\delta(M-\V_\epsilon)\right)+N$. Let $V:=q\V(\tilde\psi)+N$. Therefore, in order to show that $q\left(\V_\epsilon-\delta(M-\V_\epsilon)\right)+N$ is greater than the right hand side of (\ref{eqn:2ai}), we need to show
\begin{equation}\label{eqn:2aiprime}
V+q\epsilon-q\delta(M-\V_\epsilon)>\sqrt{(V+q\epsilon+q\delta(M-\V_\epsilon))(V+q\delta(M-\V_\epsilon))}
\end{equation}

The arithmetic mean is greater than the geometric mean, so (\ref{eqn:2aiprime}) will follow if the following holds:
\begin{align*}
V+q\epsilon-q\delta(M-\V_\epsilon)&>\frac{1}{2}\left(V+q\epsilon+q\delta(M-\V_\epsilon)+V+q\delta(M-\V_\epsilon)\right)
\\&=V+\frac{q\epsilon}{2}+q\delta(M-\V_\epsilon)
\end{align*}
which is equivalent to
\begin{equation}\label{eqn:epdel}
\frac{\epsilon}{2}>2\delta(M-\V_\epsilon)
\end{equation}
which holds whenever $\delta$ is very small compared to $\epsilon$.

We also showed earlier that the mean action of orbits of $(\psi_i^q,qy_++N)$ with $-1+\delta\leq x\leq1-\delta$ is at least the mean action of the corresponding orbit of $(\psi^q,qy_++N)$ minus $q\delta(M-\V_\epsilon)$. Therefore (\ref{eqn:2ai}) implies the following for the unperturbed map $\psi$:
\begin{align*}
&\inf\left\{\frac{\A_N(\gamma)}{\ell(\gamma)}\;\middle|\;\gamma\in\P(\psi^q)\right\}
\\&\leq\sqrt{\hm(q\V_\epsilon+N,q(\V_\epsilon+\delta(M-\V_\epsilon))+N)\left(q\left(\V(\tilde\psi)+\delta(M-\V_\epsilon)\right)+N\right)}+q\delta(M-\V_\epsilon)
\end{align*}
Taking $\epsilon,\delta\to0$ gives
\[
\inf\left\{\frac{\A_N(\gamma)}{\ell(\gamma)}\;\middle|\;\gamma\in\P(\psi^q)\right\}\leq q\V(\tilde\psi)+N
\]
which implies
\[
\inf\left\{\frac{\A(\gamma)}{\ell(\gamma)}\;\middle|\;\gamma\in\P(\psi^q)\right\}\leq q\V(\tilde\psi)
\]
via (\ref{eqn:shiftgoal}). Applying Lemma \ref{lem:rational} allows us to obtain (\ref{eqn:goal}).

\subsubsection*{Case (2a)(ii) $y_m\in\R-\Q$, $\V(\tilde\psi)<m\neq M$}

Assume $\V(\tilde\psi)<m\neq M$. If $y_+=y_M$ then construct a perturbed diffeomorphism as $\psi_i$ was constructed in Case (2a)(i), using as $b$ a smooth nonincreasing function which is identically zero until after $1-\delta$ and then identically $m-M$ near 1; if $y_-=y_M$, use a smooth nonincreasing function which is identically $M-m$ near $-1$ and identically zero from before $-1+\delta$.

We are now in Case (2a)(i), and have only changed our Calabi invariant and the actions of the orbits which we haven't touched by at most $\delta(M-m)$. All new orbits will have action at least $m-\delta(M-m)$, which we can arrange to be greater than the upper bound of $\V(\tilde\psi)+\delta(M-m)$ given by applying Case (2a)(i) to the perturbed diffeomorphism, by making $\delta$ small enough in light of $\V(\tilde\psi)<m$. Sending $\delta\to0$ then gives (\ref{eqn:goal}).

\subsubsection*{Case (2a)(iii) $y_m\in\R-\Q$, $y_m=y_-$, $-y_-+F\leq\V(\tilde\psi)<\hm(y_+,-y_-+F)$}

Assume $-y_-+F\leq\V(\tilde\psi)+\epsilon<\hm(y_+,-y_-+F)$ and $\V(\tilde\psi)+\epsilon\in\R-\Q$.

Construct a perturbed diffeomorphism $\psi_{iii}$ as $\psi_i$ was constructed in Case (2a)(i), using as $b_{iii}$ a smooth nonincreasing function which is identically zero until after $1-\delta$ and then identically $-y_++\V(\tilde\psi)+\epsilon$ near 1. Also choose $F+\int_{-1}^1b_{iii}(x)\,dx\in\Q$. Because $y_m\in\R-\Q$, it follows that $-y_-+F+\int_{-1}^1b_{iii}(x)\,dx\in\R-\Q$.

As in Case (2a)(i), we can compute that we have only changed our Calabi invariant and the actions of the orbits which carry through to ${\psi_{iii}}^q$ from $\psi^q$ by at most $q\delta(y_+-\V(\tilde\psi)-\epsilon)$. Let $\V_\epsilon$ denote $\V(\tilde\psi)+\epsilon$ throughout this case. All new orbits of ${\psi_{iii}}^q$ have action at least $q\V_\epsilon+N$, which we want to be greater than the bound
\[
\sqrt{\hm\left(q\V_\epsilon+N,q\left(-y_-+F+\delta(y_+-\V_\epsilon)\right)+N\right)\left(q\left(\V(\tilde\psi)+\delta(y_+-\V_\epsilon)\right)+N\right)}
\]
given by applying Proposition \ref{prop:penultimate} to ${\psi_{iii}}^q$ so that any orbits of ${\psi_{iii}}^q$ which do not correspond to orbits of $\psi^q$ cannot be picked out by the bound. Because the arithmetic mean is greater than both the geometric and harmonic means, it suffices to show that $q\V_\epsilon+N$ is greater than the appropriate combination of arithmetic means. This follows by making $\delta$ very small compared to $\epsilon$ (compare to the reasoning we used to show (\ref{eqn:2aiprime}) in Case (2a)(i); this bound is actually sharper, since $y_m<\V(\tilde\psi)$).

Therefore the infimum guaranteed by (\ref{eqn:Ngoal}) refers to orbits which correspond to orbits of $\psi$. Because the mean action of these orbits as orbits of $({\psi_{iii}}^q,qy_++N)$ differ by their mean action as orbits of $\psi^q$ by at most $q\delta(y_+-\V_\epsilon)$, we obtain
\begin{align*}
&\inf\left\{\frac{\A_N(\gamma)}{\ell(\gamma)}\;\middle|\;\gamma\in\P(\psi^q)\right\}
\\&\leq\sqrt{\hm\left(q\V_\epsilon+N,q\left(-y_-+F+\delta(y_+-\V_\epsilon)\right)+N\right)\left(q\left(\V(\tilde\psi)+\delta(y_+-\V_\epsilon)\right)+N\right)}
\\&\;\;\;\;\;\;+\delta(y_+-\V(\tilde\psi)-\epsilon)
\end{align*}
from which, by sending $\delta\to0$, $\epsilon\to0$, we obtain
\begin{equation}\label{eqn:2aiii}
\inf\left\{\frac{\A_N(\gamma)}{\ell(\gamma)}\;\middle|\;\gamma\in\P(\psi^q)\right\}\leq\sqrt{\hm\left(q\V(\tilde\psi)+N,q\left(-y_-+F\right)+N\right)\left(q\V(\tilde\psi)+N\right)}
\end{equation}
Notice now that $q\V(\tilde\psi)+N\geq\hm(q\V(\tilde\psi)+N,q\left(-y_-+F\right)+N)$, therefore the right hand side of (\ref{eqn:2aiii}) is less than or equal to $q\V(\tilde\psi)+N$, giving us
\[
\inf\left\{\frac{\A_N(\gamma)}{\ell(\gamma)}\;\middle|\;\gamma\in\P(\psi^q)\right\}\leq q\V(\tilde\psi)+N
\]
Undoing the rotation as in (\ref{eqn:shiftgoal}) and then applying Lemma \ref{lem:rational} gives us (\ref{eqn:goal}).

\subsubsection*{Case (2a)(iv) $y_m\in\R-\Q$, $y_m=y_+<\V(\tilde\psi)<\hm(y_+,-y_-+F)$}

Assume $y_+\leq\V(\tilde\psi)+\epsilon<\hm(y_+,-y_-+F)$ and $\V(\tilde\psi)+\epsilon\in\R-\Q$.

In this case, to construct the perturbation $\psi_{iv}$ use a smooth nonincreasing function $b_{iv}$ which is identically $-y_-+F-\V(\tilde\psi)-\epsilon>0$ near $-1$, identically zero from before $-1+\delta$, and for which $F+\int_{-1}^1b(x)\,dx\in\Q$.

Notice that if $-1+\delta\leq x\leq1$, the action function $f_{iv}$ of $\psi_{iv}$ equals $f$. (Therefore in particular, the orbits of $\psi_{iv}$ which correspond to orbits of $\psi$ share the same mean action.) When $-1\leq x\leq-1+\delta$,
\begin{align*}
f_{iv}(x,y)&=\int_{-1+\delta}^xtb_{iv}'(t)\,dt+f_{iv}(x,y)
\\&=xb_{iv}(x)-(-1+\delta)(0)-\int_{-1+\delta}^xb_{iv}(t)\,dt+(-y_-+F)
\\&=-y_-+F+xb_{iv}(x)+\int_x^{-1+\delta}b_{iv}(t)\,dt
\end{align*}
Notice that $b_{iv}$ is nonincreasing, so $f_{iv}$ achieves its minimum on $-1\leq x\leq-1+\delta$ when $x=-1$. This minimum is $\V(\tilde\psi)+\epsilon+\int_{-1}^1b_{iv}(t)\,dt$, which is greater than $\V(\tilde\psi)+\epsilon$.

From here the analysis goes through in an analogous manner to that in Case (2a)(iii).

\subsubsection*{Case (2b) $y_m\in\R-\Q$, $\hm(y_+,-y_-+F)\leq\V(\tilde\psi)$}

We simply perturb $\psi$ so that $y_M$ is irrational and the flux is rational. An application of Proposition \ref{prop:penultimate} combined with the usual shift by $N$ and division by $q$ then immediately gives the result, because the perturbed Calabi invariant will be greater than the harmonic mean in the upper bound, so greater than the upper bound. The same checks (change in Calabi invariant, mean action of orbits which are untouched, and mean action of orbits which are new) as in the previous cases are required and apply.

If $y_+=y_M$ and for $\epsilon>0$, $\V(\tilde\psi)<y_+-\epsilon$, and $y_+-\epsilon\in\R-\Q$, we use the perturbation $b:[-1,1]\to[-\epsilon,0]$ which is nonincreasing, zero until after $1-\delta$, $-\epsilon$ near $1$, and for which $F+\int_{-1}^1b(x)\,dx\in\Q$.

If $y_-=y_M$ and for $\epsilon>0$, $\V(\tilde\psi)<-y_-+F-\epsilon$, and $y_-+\epsilon\in\R-\Q$, we use the perturbation $b:[-1,1]\to[0,\epsilon]$ which is nonincreasing, $\epsilon$ near $-1$, zero from before $-1+\delta$, and for which $F+\int_{-1}^1b(x)\,dx\in\Q$.

\end{proof}

\begin{appendices}

\section{Criterion for diffeomorphisms of the annulus and disk to be unrelated}\label{app:new}

We will attempt to devise a map $\kappa:A\to\D^2$ for which the following conditions hold.
\begin{itemize}
\item Denote by $\psi_\kappa$ some extension of $\kappa\circ\psi\circ\kappa^{-1}$ to the whole disk (\emph{a priori} it is only defined on the image of $\kappa$). We need $\psi_\kappa$ to satisfy the hypotheses of Theorem \ref{thm:disk}.
\item The existence of an orbit of $\psi_\kappa$ satisfying (\ref{eqn:ineqdisk}) implies the existence of an orbit of $\psi$ satisfying (\ref{eqn:goal}); the only way to do this is to be sure that any orbits of $\psi_\kappa$ which have no relationship to any orbit of $\psi$ have mean action greater than $\V(\tilde\psi)$.
\end{itemize}
If such a $\kappa$ exists, then our Theorem \ref{thm:main} follows as a corollary of Theorem \ref{thm:disk}. However, while we do not prove that given $\psi$ there is no such map $\kappa$ (which depends on $\psi$), we do show that the best candidate for $\kappa$ applying to all diffeomorphisms of the annulus cannot satisfy the second requirement for a large class of $\psi$. Specifically, this class consists of all those $\psi$ for which (\ref{eqn:12fv}) holds.

The first condition indicates that $\kappa$ ought to send the boundary component of $A$ corresponding to $\max\{y_+,-y_-+F\}$ to the boundary of $\D^2$. Therefore, throughout the rest of this appendix we assume that $y_+=\max\{y_+,-y_-+F\}$; the case when $-y_-+F$ can be treated in exactly the same way.

The second condition means that unless we plan to tailor $\kappa$ very carefully to each $\psi$ in turn, we want the orbits of $\psi_\kappa$ which don't correspond to orbits of $\psi$ to have easily computable mean action, and we also want $\psi_\kappa$ to have an easily computable action function and Calabi invariant. The only way to be guaranteed we can compute these is to ask for $\kappa$ to have image
\[
\{(r,\theta)\in\D^2|r_0\leq r\leq1\}
\]
in polar coordinates on $\D^2$, where $r_0\in[0,1]$, for $\kappa$ to restrict to
\begin{equation}\label{eqn:kappabetaD}
(1,y)\mapsto(1,y)\text{ and }(-1,y)\mapsto(r_0,y)
\end{equation}
on $\partial A$, and for
\begin{equation}\label{eqn:kappaomegaD}
\kappa^*\left(\frac{1}{\pi}r\,dr\wedge d\theta\right)=\frac{1-r_0^2}{2}\omega
\end{equation}
so that if $\psi$ is a symplectomorphism, so is $\psi_\kappa$. The most obvious choice is the map $\kappa(x,y)=\left(\sqrt{\frac{1-r_0^2}{2}x+\frac{1+r_0^2}{2}},y\right)$, though it is not necessary that $\kappa$ be exactly of this form. We can extend $\kappa\circ\psi\circ\kappa^{-1}$ by defining it to be a rotation by $y_-$ on the disk of radius $r_0$. Its boundary rotation number is $y_+$.

Let $f_\kappa$ denote the action function of $(\psi_\kappa,y_+)$. If
\begin{equation}\label{eqn:zeroineq}
f_\kappa(0,0)\leq\V(\psi_\kappa,y_+)
\end{equation}
then the fixed point of $\psi_\kappa$ at the origin (as well as any periodic orbits in the disk of radius $r_0$ in the case when $y_-\in\Q$) satisfies the conclusion of Theorem \ref{thm:disk}, so we can learn nothing about the periodic orbits of $\psi$ from those of $\psi_\kappa$.

\begin{prop}\label{prop:appendix} If (\ref{eqn:12fv}) holds, then (\ref{eqn:zeroineq}) holds. That is, if
\[
\frac{1}{2}F\leq\V(\tilde\psi)
\]
then Theorem \ref{thm:main} does not follow from \cite[Theorem 1.2]{mean} by filling with a disk the boundary along which $f$ takes the value $\min\{f(1,y),f(-1,y)\}$.
\end{prop}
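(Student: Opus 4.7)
The plan is to compute both sides of the desired inequality $f_\kappa(0,0) \leq \V(\psi_\kappa,y_+)$ explicitly via Stokes' theorem after pulling everything back from $\D^2$ to $A$ via $\kappa$. Throughout I set $c = \frac{1-r_0^2}{2}$, $\beta_\kappa := \kappa^*\beta_{\D^2}$ (so $\beta_\kappa = c\beta + (1-c)\alpha'$ where $\alpha' = \frac{dy}{2\pi}$), and $\hat f := f_\kappa\circ\kappa$, the action function on $A$ associated to $\beta_\kappa$ with $\hat f|_{\partial_+A}=y_+$.

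First I would compute $f_\kappa(0,0)$. Since $\psi_\kappa$ is rotation by $y_-$ on $D_{r_0}$ and $\beta_{\D^2}$ is rotation-invariant, $f_\kappa$ is constant on $D_{r_0}$, so $f_\kappa(0,0) = \hat f|_{\partial_-A}$. Integrating $d\hat f = \psi^*\beta_\kappa - \beta_\kappa$ along $\eta_0$ and computing $\int_{\tilde\psi\circ\eta_0}\beta_\kappa$ via Stokes applied to the flux region (noting $d\beta_\kappa = c\omega$ and the boundary values $\beta_\kappa|_{x=\pm 1}$) yields
\[
f_\kappa(0,0) = r_0^2\,y_- + \tfrac{1-r_0^2}{2}\,F.
\]
Next I would decompose $\D^2 = D_{r_0}\cup A_{r_0}$ in the definition of $\V(\psi_\kappa,y_+)$ and pull the integral over $A_{r_0}$ back to $A$ using that $\kappa^*\Omega_{\D^2} = c\omega$; this gives $\V(\psi_\kappa,y_+) = r_0^2 f_\kappa(0,0) + c\int_A\hat f\,\omega$, so the target inequality reduces to $\int_A\hat f\,\omega \geq 2 f_\kappa(0,0)$.

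To evaluate $\int_A \hat f\,\omega$, I would integrate by parts using $\omega = d\beta$ (for the standard $\beta = \frac{x}{2\pi}dy$):
\[
\int_A\hat f\,\omega = \int_{\partial A}\hat f\,\beta - \int_A(\psi^*\beta_\kappa - \beta_\kappa)\wedge\beta.
\]
The boundary integral equals $y_+ + f_\kappa(0,0)$, while $\int_A\beta_\kappa\wedge\beta = 0$, reducing the question to the mixed wedge integrals $P := \int_A\psi^*\beta\wedge\beta$, $Q_1 := \int_A\psi^*\beta\wedge\alpha'$, and $Q_2 := \int_A\psi^*\alpha'\wedge\beta$. The first is the familiar Calabi Stokes identity $P = \tilde p - 2\V(\tilde\psi)$. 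The second can be pinned down by applying the same Stokes argument with the alternative primitives $\beta\pm\alpha'$ of $\omega$ and comparing the resulting expressions for $\int_A\hat f\,\omega$; this comparison both reproduces the formula for $f_\kappa(0,0)$ from Step~1 and yields the key identity
\[
Q_1 = y_+ + y_- - F,
\]
valid for every area-preserving $\psi$ that is a rotation near $\partial A$.

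The main obstacle is $Q_2$, which encodes the average of the function $h$ determined by $dh = \psi^*dy - dy$ (with $h|_{\partial_+A}=0$) and is not a function of the standard invariants $\V(\tilde\psi),F,y_\pm$ alone. At this point the hypothesis $\tfrac12 F \leq \V(\tilde\psi)$ enters: substituting the explicit values of $f_\kappa(0,0)$, $P$, and $Q_1$ into the reduced inequality and collecting terms rewrites the required estimate as
\[
2c\bigl(\V(\tilde\psi) - \tfrac12 F\bigr) + 2c\bigl(y_- - \tfrac12 F\bigr) + (1-c)\bigl[(y_+-y_-)-Q_2\bigr] \geq 0,
\]
in which the first bracket is nonnegative by hypothesis. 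The remaining task---and the genuinely delicate part of the argument---is to use the rotation-near-boundary structure of $\psi$ (which forces $h$ to be constant on each boundary component with values $0$ and $2\pi(y_--y_+)$) together with the bound on $F$ to absorb the $Q_2$ term into the nonnegative contributions, closing the inequality for every $r_0\in[0,1]$.
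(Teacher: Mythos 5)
Your setup, the reduction to $\int_A\hat f\,\omega\geq 2f_\kappa(0,0)$, the value $f_\kappa(0,0)=r_0^2 y_-+\tfrac{1-r_0^2}{2}F$, and the Stokes decomposition all match the structure of the paper's argument, so this is essentially the same approach. However, there is a genuine gap at the central step, which also makes your final paragraph unnecessary. You assert that $Q_2=\int_A\psi^*\alpha'\wedge\beta$ ``is not a function of the standard invariants'' and then try to close the estimate by a vague absorption argument. In fact $Q_2$ \emph{is} a standard invariant: integrating by parts along $x$ and using that $\tilde\psi_2(\pm1,y)=y+2\pi y_\pm$ one finds
\[
Q_2 \;=\; \frac{1}{4\pi^2}\int_0^{2\pi}\!\!\int_{-1}^{1} x\,\frac{\partial\tilde\psi_2}{\partial x}\,dx\,dy
\;=\; (y_++y_-)\;-\;\frac{1}{2\pi}\int_A(\tilde\psi_2-pr_2)\,\omega ,
\]
and the remaining integral is exactly the one the paper identifies with the flux at equation~(\ref{eqn:areaundergraph}): because $\psi$ is area-preserving, $\int_A(\tilde\psi_2-pr_2)\,\omega = 2\pi F$, so $Q_2 = y_++y_--F$ (and, for the same reason, $Q_1=Q_2$). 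This is the key insight, and it is precisely what your proposal is missing. Once it is in hand, substituting into $\int_A\hat f\,\omega = y_+ + f_\kappa(0,0) - cP - (1-c)Q_2$ with $P=\tilde p-2\V(\tilde\psi)$ collapses to a linear identity; at $r_0=0$ it shows that~(\ref{eqn:zeroineq}) is \emph{equivalent} to~(\ref{eqn:12fv}), not merely implied by it, so there is nothing left to absorb and no delicate case analysis in $h$ is required. (The paper implements the same computation slightly differently: it invokes Lemma~\ref{lem:calabiindep} to replace $\kappa^*\beta_{\D^2}$ by the simpler primitive $\beta'=\tfrac12\beta+\tfrac{1}{4\pi}dy$ before integrating, which sidesteps the $Q_1/Q_2$ bookkeeping, but the flux identity is doing the same work in both arguments.)
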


\begin{proof}

We will show that when $r_0=0$, (\ref{eqn:zeroineq}) is equivalent to (\ref{eqn:12fv}). When $r_0>0$, it can be derived in exactly the same way that (\ref{eqn:zeroineq}) is equivalent to
\[
\frac{1}{2}(1-r_0^2)F-2r_0^2(-y_-+F)\leq(1-r_0^2)\V(\tilde\psi)
\]
which follows from (\ref{eqn:12fv}). Therefore from now on we assume $r_0=0$.

We will use the primitive $\beta_{\D^2}=\frac{r^2}{2\pi}\,d\theta$ and the notation $\omega_{\D^2}$ for $\frac{1}{\pi}r\,dr\wedge d\theta$. Let $\eta_0$ denote the curve $\{y=0\}$ in $A$, oriented in the direction of decreasing $x$. On the left hand side of (\ref{eqn:zeroineq}), we have
\begin{align*}
f_\kappa(0,0)-f_\kappa(1,0)&=\int_{\kappa(\eta_0)}df_\kappa
\\&=\int_{\kappa(\eta_0)}\left(\psi_\kappa^*\beta_{\D^2}-\beta_{\D^2}\right)
\\&=\int_{\eta_0}\left(\kappa^*{\kappa^{-1}}^*\psi^*\kappa^*\beta_{\D^2}-\kappa^*\beta_{\D^2}\right)
\\&=\int_{\eta_0}\left(\psi^*\kappa^*\beta_{\D^2}-\kappa^*\beta_{\D^2}\right)
\\&=\frac{1}{2}F-\int_0^{2\pi y_+}\kappa^*\beta_{\D^2}|_{\partial_+A}-\int_{2\pi y_-}^0\kappa^*\beta_{\D^2}|_{\partial_-A}
\\&=\frac{1}{2}F-y_++0
\end{align*}
where the final term is zero because of (\ref{eqn:kappabetaD}), (\ref{eqn:kappaomegaD}), and Stokes' theorem. Therefore, because $f_\kappa(1,0)=y_+$,
\[
f_\kappa(0,0)=\frac{1}{2}F
\]

On the right hand side of (\ref{eqn:zeroineq}), we have
\[\V(\psi_\kappa,y_+)=\int_{\D^2}f_\kappa\omega_{\D^2}=\int_{\kappa_*(A)}f_\kappa\,d\beta_{\D^2}=\int_A\kappa^*(f_\kappa\,d\beta_{\D^2})=\int_A(f_\kappa\circ\kappa)\,d(\kappa^*\beta_{\D^2})
\]
Notice that
\[
d(f_\kappa\circ\kappa)=\kappa^*df_\kappa=\kappa^*((\kappa^{-1})^*\psi^*\kappa^*\beta_{\D^2}-\beta_{\D^2})=\psi^*\kappa^*\beta_{\D^2}-\kappa^*\beta_{\D^2}
\]
therefore the Calabi invariant of $\psi_\kappa$ on $\D^2$ can be computed as the Calabi invariant of $\psi$ using $\kappa^*\beta_{\D^2}$ and $\kappa^*\omega_{\D^2}$ rather than $\beta$ and $\omega$. By Lemma \ref{lem:calabiindep}, the Calabi invariant depends only on the restriction to $\partial A$ of the one-form with which it is computed. Therefore we may compute $\V(\psi_\kappa,y_+)$ using the following one-form, which agrees with $\kappa^*\beta_{\D^2}$ on $\partial A$:
\[
\beta'=\frac{1}{2}\beta+\frac{1}{4\pi}\,dy
\]
For the differential of the action function, we get
\[
df_{(\psi,y_+,\beta')}=\psi^*\beta'-\beta'=\psi^*\left(\frac{1}{2}\beta+\frac{1}{4\pi}\,dy\right)-\frac{1}{2}\beta-\frac{1}{4\pi}\,dy=\frac{1}{2}\,df+\frac{1}{4\pi}(\psi^*dy-dy)
\]
Let $\tilde\psi(x,y)=(\tilde\psi_1(x,y),\tilde\psi_2(x,y))$. Then
\[
\psi^*dy=\frac{\partial\tilde\psi_2}{\partial x}\,dx+\frac{\partial\tilde\psi_2}{\partial y}\,dy
\]
Therefore,
\[
d(\tilde\psi_2-y)=\frac{\partial\tilde\psi_2}{\partial x}\,dx+\left(\frac{\partial\tilde\psi_2}{\partial y}-1\right)\,dy=\psi^*dy-dy
\]
This gives us
\[
d\left(\frac{1}{2}f+\frac{1}{4\pi}(\tilde\psi_2-y)\right)=df_{(\psi,y_+,\beta')}
\]
We can also check that $f_{(\psi,y_+,\beta')}(1,y)=y_+$. Therefore
\[
f_{(\psi,y_+,\beta')}(x,y)=\frac{1}{2}f(x,y)+\frac{1}{4\pi}(\tilde\psi_2(x,y)-pr_2(x,y))
\]

From $f_{(\psi,y_+,\beta')}$ we can compute $\V(\psi_\kappa,y_+)$:
\begin{align}
\V(\psi_\kappa,y_+)&=\frac{\int_Af_{(\psi,y_+,\beta')}d\beta'}{\int_Ad\beta'} \nonumber
\\&=\frac{\int_A\left(\frac{1}{2}f(x,y)+\frac{1}{4\pi}(\tilde\psi_2(x,y)-pr_2(x,y))\right)\frac{1}{2}\omega}{\int_A\frac{1}{2}\omega} \nonumber
\\&=\frac{\int_A\frac{1}{2}f\frac{1}{2}\omega}{\int_A\frac{1}{2}\omega}+\frac{\int_A\frac{1}{4\pi}(\tilde\psi_2-pr_2)\frac{1}{2}\omega}{\int_A\frac{1}{2}\omega} \nonumber
\\&=\frac{1}{2}\V(\tilde\psi)+\frac{1}{16\pi^2}\int_0^{2\pi}\left(\int_{-1}^1(\tilde\psi_2-pr_2)\,dx\right)dy \nonumber
\\&=\frac{1}{2}\V(\tilde\psi)+\frac{1}{16\pi^2}\int_0^{2\pi}2\pi F\,dy \label{eqn:areaundergraph}
\\&=\frac{1}{2}\V(\tilde\psi)+\frac{1}{4}F \nonumber
\end{align}
(To obtain (\ref{eqn:areaundergraph}), notice that because $\psi$ is area-preserving, the integral $\int_{-1}^1\tilde\psi_2-pr_2\,dx$ measures the area under the graph of the function $\psi_2(x,0)$ with area form $dx\wedge dy$ on $\tilde A$.)

After multiplying by two, (\ref{eqn:zeroineq}) is equivalent to (\ref{eqn:12fv}):
\[
\frac{1}{2}F\leq\V(\tilde\psi)
\]
Notice that when we add a full $2\pi$ rotation to $\psi$, the flux changes by two while the Calabi invariant changes by one. Therefore (\ref{eqn:12fv}) is a property of $\psi$ rather than of the lift $\tilde\psi$.

If (\ref{eqn:12fv}) holds then the hypotheses of Theorem \ref{thm:disk} also hold. This is because
\[
\V(\tilde\psi_\kappa,y_+)=\frac{1}{2}\V(\tilde\psi)+\frac{1}{4}F\overset{\ref{eqn:12fv}}{\leq}\V(\tilde\psi)<y_+
\]
so we can apply Theorem \ref{thm:disk}.

\end{proof}

It remains to understand whether or not (\ref{eqn:12fv}) holds for a robust class of $\tilde\psi$. That is, we would like Theorem \ref{thm:main} to not only be a nontrivial extension of \cite[Theorem 1.2]{mean}, but to apply to far more symplectomorphisms than the strategy outlined at the beginning of this appendix applies to. As we have seen in the proof of Proposition \ref{prop:appendix}, this is equivalent to (\ref{eqn:12fv}) holding for ``many" $\psi$, in some sense of the word ``many."

Notice that for rigid rotations, the equality $\frac{1}{2}F=\V(\tilde\psi)$ holds. Therefore, it's not unreasonable to suspect that under some natural finite measure on the group $G$ of pairs $(\psi,y_+)$, the inequality (\ref{eqn:12fv}) could hold for a set of half measure. We will not go so far, but will at least convince ourselves that (\ref{eqn:12fv}) holds for enough pairs $(\psi,y_+)$ so that Theorem \ref{thm:main} significantly extends \cite[Theorem 1.2]{mean}, by considering the case where
\begin{equation}\label{eqn:constxpsi}
\tilde\psi(x,y)=(x,y+2\pi g(x))\text{ and }\beta=\frac{x}{2\pi}\,dy
\end{equation}
In this case we get $df=xg'(x)\,dx$. Therefore
\[
f(x,y)=\int_1^xtg'(t)\,dt+f(1,y)=xg(x)+\int_x^1g(t)\,dt-g(1)+g(1)=xg(x)+\int_x^1g(t)\,dt
\]
We can integrate $f$ to obtain
\[
\V(\tilde\psi)=\frac{1}{2}\int_{-1}^1\left(xg(x)+\int_x^1g(t)\,dt\right)\,dx
\]
We can integrate $g$ to obtain
\[
F=\int_{-1}^1g(x)\,dx
\]
Therefore, all $\tilde\psi$ of the form (\ref{eqn:constxpsi}) for which
\[
\int_{-1}^1g(x)\leq\int_{-1}^1\left(xg(x)+\int_x^1g(t)\,dt\right)\,dx
\]
fall into the class of area-preserving diffeomorphisms of $A$ for which Theorem \ref{thm:main} is new. We can apply the hypotheses so long as $\frac{1}{2}\int_{-1}^1\left(xg(x)+\int_x^1g(t)\,dt\right)\,dx<\max\{g(1),-g(-1)+\int_{-1}^1g(x)\,dx\}$. For example, $g(x)=cx^n$ for $c\in\R_{>0}, n\in\Z$ are specific examples to which we can apply the theorem and for which it is new.

\end{appendices}

\section*{Acknowledgement}

We thank Michael Hutchings for his guidance throughout this project. We also thank James Conway for his suggestions in the case where $y_+-y_-$ is rational, and Barney Bramham and Umberto Hryniewicz for helpful discussions and comments.

\bibliography{references_W_MAA}
\bibliographystyle{plain}

\end{document}